\documentclass[12pt,usenames,dvipsnames]{amsart}

\usepackage{mathpazo}
\usepackage{hyperref}
\usepackage{cleveref}
\usepackage{bm}
\usepackage{comment}
\usepackage[margin=1in]{geometry}
\usepackage{enumitem}
\usepackage{calrsfs}
\DeclareMathAlphabet{\pazocal}{OMS}{zplm}{m}{n}

\usepackage{graphicx}
\usepackage{tikz-cd}
\usepackage{multirow}

\usepackage[cmtip, all]{xy}
\usepackage{array}
\usepackage{lscape}

\usepackage{amssymb,amsmath,latexsym,amsthm}

\newcommand{\bS}{\mathbb{S}}

\DeclareMathOperator{\Hom}{Hom}

\DeclareMathOperator{\SL}{SL}

\DeclareMathOperator{\PGL}{PGL}
\DeclareMathOperator{\GL}{GL}

\DeclareMathOperator{\Span}{span}

\DeclareMathOperator{\id}{id}

\DeclareMathOperator{\Dr}{Dr}

\DeclareMathOperator{\init}{in}

\DeclareMathOperator{\Star}{Star}
\DeclareMathOperator{\Gr}{Gr}
\DeclareMathOperator{\TGr}{TGr}

\DeclareMathOperator{\sgn}{sgn}

\DeclareMathOperator{\Conv}{conv}

\DeclareMathOperator{\Fl}{F\ell}
\DeclareMathOperator{\TFl}{TF\ell}

\DeclareMathOperator{\Trop}{Trop}

\DeclareMathOperator{\lc}{lc}

\DeclareMathOperator{\op}{op}

\newcommand{\pB}{\pazocal{B}}
\newcommand{\pC}{\pazocal{C}}

\newcommand{\field}[1]{\mathbb{#1}}

\newcommand{\Z}{\field{Z}}

\newcommand{\R}{\field{R}}
\newcommand{\C}{\field{C}}

\renewcommand{\P}{\field{P}}

\newcommand{\GG}{\mathbb{G}}
\newcommand{\bk}[2]{\langle #1, #2 \rangle}

\newcommand{\Zone}{\langle \mathbf{1} \rangle}
\newcommand{\bone}{\mathbf{1}}
\newcommand{\pF}{\pazocal{F}}

\newcommand{\pQ}{\pazocal{Q}}
\newcommand{\boldS}{\mathbf{S}}
\newcommand{\vecomega}{\vec{\omega}}

\newcommand{\Berg}{\mathsf{Berg}}
\newcommand{\MS}{\mathsf{MS}}
\newcommand{\Quiv}{\mathsf{Quiv}}
\newcommand{\Simp}{\mathsf{Simp}}
\newcommand{\lp}{\mathsf{lp}}

\newcommand{\sa}{\mathsf{a}}
\newcommand{\se}{\mathsf{e}}
\newcommand{\su}{\mathsf{u}}
\newcommand{\sv}{\mathsf{v}}
\newcommand{\sw}{\mathsf{w}}

\newcommand{\sA}{\mathsf{A}}

\newcommand{\sP}{\mathsf{P}}
\newcommand{\sQ}{\mathsf{Q}}
\newcommand{\sR}{\mathsf{R}}

\newcommand{\Sn}[1]{\mathfrak{S}_{#1}}
\newcommand{\onto}{\twoheadrightarrow}
\newcommand{\chow}[2]{#1/\!\!\!/#2}

\newcommand{\Flo}[1]{\Fl^{\circ}(#1)}
\newcommand{\TFlo}[1]{\TFl^{\circ}(#1)}

\DeclareMathOperator{\semigp}{semigp}

\newtheorem{theorem}{Theorem}[section]
\newtheorem{lemma}[theorem]{Lemma}
\newtheorem{proposition}[theorem]{Proposition}

\newtheorem*{theorem*}{Theorem}

\theoremstyle{definition}

\newtheorem{example}[theorem]{Example}
\newtheorem{remark}[theorem]{Remark}

\theoremstyle{remark}

\numberwithin{equation}{section}
\numberwithin{table}{section}
\numberwithin{figure}{section}

\title{Initial degenerations of flag varieties}

\author{Daniel Corey}
\author{Jorge Alberto Olarte}

\begin{document}
	
	\maketitle
	
	\begin{abstract}
	We prove that the initial degenerations of the flag variety admit closed immersions into finite inverse limits of flag matroid strata, where the diagrams are derived from matroidal subdivisions of a suitable flag matroid polytope. As an application, we prove that the initial degenerations of $\Fl^{\circ}(n)$---the open subvariety of the complete flag variety $\Fl(n)$ consisting of flags in general position---are smooth and irreducible when $n\leq 4$. We also study the Chow quotient of $\Fl(n)$ by the diagonal torus of $\PGL(n)$, and show that, for $n=4$, this is a log crepant resolution of its log canonical model.
    
    \medskip
    
    \noindent \textbf{Keywords}: Chow quotient, flag variety, generalized permutahedra, log canonical compactification, matroid
    
    \medskip
    
    \noindent \textbf{2020 MSC}: 14T90 (primary)  14M15, 14C05, 52B20 (secondary).
\end{abstract}

\section{Introduction}
	
	Flag varieties live at the intersection of algebraic geometry, representation theory and combinatorics, and their degenerations have been intensely studied from several perspectives. A standard goal is to find flat degenerations of the flag variety to toric varieties, as this  allows one to compute numerical invariants of the flag varieties using combinatorial techniques. The theories of Newton-Okounkov bodies \cite{HaradaKaveh, Kaveh, KoganMiller} and cluster algebras \cite{BossingerFriasMedinaMageeNajeraChavez,GrossHackingKeelKontsevich, RietschWilliams} has been successful in this endeavor\footnote{See \cite{FangFourierLittelmann} for a comprehensive history of this subject.}, building on earlier work in \cite{AlexeevBrion, Caldero}.  In contrast, the goal of this paper is to systematically investigate all initial degenerations---those arising via Gr\"obner theory and tropical geometry---of the Lie-type $\mathsf{A}$ flag varieties, and use this to study their Chow quotients. 
	
	The Lie-type $\sA$ flag variety $\Fl(\vec{r},n)$, where $n\geq 1$ and  $\vec{r}=(r_1,r_2,\ldots,r_s)$ is an increasing sequence of positive integers less than $n$, parameterizes \textit{flags}, i.e., sequences of linear subspaces $F_1\subset F_2 \subset \cdots \subset F_s$ of $\C^{n}$ where $\dim F_{k} = r_k$.
	When $s=1$, the flag variety specializes to the \textit{Grassmannian} $\Gr(r,n)$. 
	The Pl\"ucker embeddings of the Grassmannians $\Gr(r_i,n)$ combine to produce an embedding
	\begin{equation*}
	    \Fl(\vec{r},n) \hookrightarrow \P(\wedge^{r_1}\C^{n}) \times \cdots \times \P(\wedge^{r_s}\C^{n}),
	\end{equation*}
    which gives $\Fl(\vec{r},n)$ the structure of a multiprojective variety. The multi-homogeneous coordinates of a flag are called its  \textit{Pl\"ucker coordinates}. Let $\Fl^{\circ}(\vec{r},n)$ be the open locus of $\Fl(\vec{r},n)$ whose Pl\"ucker coordinates are all nonzero. Its tropicalization $\TFl^{\circ}(\vec{r},n)$ may be viewed through two frameworks. Via Gr\"obner theory, a vector $\vec{\sw}\in \TFl^{\circ}(\vec{r},n)$ induces an initial degeneration $\init_{\vec{\sw}}\Fl^{\circ}(\vec{r},n)$ of the flag variety. Such a vector $\vec{\sw}$ also carries the data of a valuated flag matroid, which induces a coherent mixed subdivision $\pQ(\vecomega)$ of the Minkowski sum of hypersimplices
    \begin{equation*}
        \Delta(\vec{r},n) := \Delta(r_1,n) + \cdots + \Delta(r_s,n) 
    \end{equation*}
    into \textit{flag matroid polytopes} \cite{BrandtEurZhang}.    Each $\C$-realizable flag matroid defines a locally-closed subscheme $\Fl(\vec{\sQ})$ of  $\Fl(\vec{r},n)$, and the assignment $\vec{\sQ} \mapsto \Fl(\vec{\sQ})$ defines a diagram of type $\pQ(\vec{\omega})$, and we may form its inverse limit $\varprojlim_{\pQ(\vec{\omega})} \Fl$. 
    
    \begin{theorem}
    \label{thm:closedimmersionintro}
	There is a closed immersion
	\begin{equation}\label{eq:closedImmersionIntro}
	    \init_{\vec{\sw}}\Fl^{\circ}(\vec{r},n) \hookrightarrow \varprojlim_{\pQ(\vec{\omega})} \Fl.
	\end{equation}
	\end{theorem}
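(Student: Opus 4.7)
The plan is to construct the morphism \eqref{eq:closedImmersionIntro} via the universal property of the inverse limit, and then verify it is a closed immersion by a coordinate-ring computation. Both the source and target admit natural morphisms to the ambient multi-projective space $\prod_i \P(\wedge^{r_i}\C^n)$, and the strategy is to show that the morphism constructed compatibly refines this common ambient embedding.

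First, for each cell $\pQ$ of the coherent mixed subdivision $\pQ(\vec\omega)$, indexed by a realizable flag matroid $\vec\sQ$, I would construct a morphism $\phi_{\vec\sQ}: \init_{\vec\sw}\Fl^\circ(\vec r,n) \to \Fl(\vec\sQ)$. The idea is that, by Gr\"obner theory, $\init_{\vec\sw}\Fl^\circ$ is the special fiber of the flat degeneration obtained by scaling Pl\"ucker coordinates by $t^{\vec\sw}$; choosing a weight $\vec u$ in the relative interior of $\pQ$ produces a secondary scaling whose composed degeneration picks out precisely the coordinates indexed by the bases of $\vec\sQ$, and the resulting limit scheme is forced into the flag matroid stratum $\Fl(\vec\sQ)$ since the nonvanishing Pl\"ucker coordinates are exactly $\cB(\vec\sQ)$.

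Next, I would verify that the family $\{\phi_{\vec\sQ}\}$ is compatible with the transition morphisms of the diagram of type $\pQ(\vec\omega)$: given a face inclusion $\pQ' \subseteq \pQ$, the associated compatibility of $\phi_{\vec\sQ'}$ with $\phi_{\vec\sQ}$ reduces to transitivity of iterated initial degenerations, because face relations of the subdivision correspond exactly to iterated tropical specialization of the valuated flag matroid encoded by $\vec\sw$. The universal property of $\varprojlim_{\pQ(\vec\omega)} \Fl$ then produces the desired morphism $\Phi$.

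The final and hardest step is to show that $\Phi$ is a closed immersion rather than merely a morphism. The approach is to argue on graded coordinate rings: the initial ideal cutting out $\init_{\vec\sw}\Fl^\circ(\vec r,n)$ is generated by initial forms of the incidence and Pl\"ucker relations, and each such initial form involves only Pl\"ucker coordinates whose basis sets are vertices of a common cell of $\pQ(\vec\omega)$ (this is essentially the content of the tropical flag Pl\"ucker / valuated flag matroid axioms). Consequently these relations already hold in $\varprojlim_{\pQ(\vec\omega)} \Fl$, and the ideal of the inverse limit inside the ambient multi-projective space maps surjectively onto the initial ideal, which gives the closed immersion. The main obstacle is precisely this last comparison of ideals: for the Grassmannian the analogous statement is essentially classical, but the flag case requires a careful bookkeeping of how Pl\"ucker relations across different exterior powers $\wedge^{r_i}\C^n$ interact with matroidal mixed subdivisions of $\Delta(\vec r,n)$, where one expects to invoke the combinatorial framework of Brandt--Eur--Zhang on valuated flag matroids.
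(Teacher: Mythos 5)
Your architecture (face morphisms to each stratum, the universal property, then a coordinate-ring check) is the paper's, but the two places where you locate the work are not where the work actually is, and both substitute arguments have gaps. The first and main gap is the construction of the morphisms $\init_{\vec{\sw}}\Fl^{\circ}(\vec{r},n)\to\Fl(\vec{\sP})$ for a cell $\vec{\sP}=\vec{\sQ}_{\sv}^{\vec{\sw}}$ of $\pQ(\vecomega)$: this is the real content of the theorem, and you only assert it. A ``secondary scaling'' by a weight selecting the cell produces a \emph{further initial degeneration}, which is a different scheme, not a morphism out of $\init_{\vec{\sw}}\Fl^{\circ}(\vec{r},n)$. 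The morphism one actually needs is the coordinate projection forgetting the $p_{\lambda}$ with $\lambda\notin\pB(\vec{\sP})$, and the point to be proved is that its image satisfies the defining quadrics of $\Fl(\vec{\sP})$, equivalently that the extension of $I_{\vec{\sP}}$ is contained in $\init_{\vec{\sw}}I_{\vec{\sQ}}$. The paper's Proposition \ref{prop:init2strata} establishes exactly this by showing $f_{\mu,\nu}(\vec{\sP})=\init_{\vec{\sw}}f_{\mu,\nu}(\vec{\sQ})$ whenever the left side is nonzero, the computation resting on the fact that $\bk{\epsilon_{\mu\cup\ell}^{*}+\epsilon_{\nu\setminus\ell}^{*}}{\sv}$ is independent of $\ell$. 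Nothing in your sketch supplies this; ``the limit scheme is forced into the stratum'' is precisely what has to be checked.

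Second, your closed-immersion step leans on the claim that $\init_{\vec{\sw}}I_{\vec{r},n}$ is generated by the initial forms of the incidence--Pl\"ucker quadrics. That is a nontrivial Gr\"obner/tropical-basis assertion that you do not establish (the paper's warning that $\TFl(\vec{\sQ})\subsetneq\Dr(\vec{\sQ})$ in general already signals that such generation statements are delicate), and your target, the inverse limit, does not sit inside the ambient multiprojective space in any natural way that would let you compare ideals there: it maps to the various smaller tori $T(\sP_1)\times\cdots\times T(\sP_s)$, not to the ambient one. Fortunately none of this is needed. Since every $\lambda\in\pB(\vec{\sQ})$ lies in $\pB(\vec{\sP})$ for some cell $\vec{\sP}$ of $\pQ(\vecomega)$, every unit $p_{\lambda}^{\pm 1}$ generating the coordinate ring of $\init_{\vec{\sw}}\Fl^{\circ}(\vec{r},n)$ is in the image of $\varinjlim_{\pQ(\vecomega)}R_{\vec{\sP}}$, so the induced ring map is surjective and the morphism is a closed immersion; this one-line observation is the paper's entire argument for that step.
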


	This generalizes \cite[Theorem~1.1]{CoreyGrassmannians}, where the first author proves that each initial degeneration of the Grassmannian embeds into an inverse limits of thin Schubert cells. This is used to prove that, for $n=7$ or $8$, the Chow quotient of $\Gr(3,n)$ by the diagonal torus of $\PGL(n)$ is the log canonical compactification of the moduli space of $n$ points in $\P^{2}$ in linear general position (up to projective transformations), resolving a conjecture of Hacking, Keel, and Tevelev \cite[Conjecture~1.6]{KeelTevelev2006} (the case $n=7$ appears in \cite{CoreyGrassmannians} and $n=8$ in \cite{CoreyLuber}, which uses in a critical way \cite[Proposition~7.9]{Schock}). A Lie-type $\mathsf{D}$ generalization is studied in \cite{CoreySpinor}, where it is shown that each initial degeneration of the spinor variety embeds into an inverse limit of even $\Delta$-matroid strata.
	
	Nevertheless, there are some notable differences between the (type-$\mathsf{A}$) flag variety in this paper and the Grassmannian and spinor varieties. To a parabolic subgroup $P$ of a semisimple algebraic group $G$ is associated a weight $\mu$, a unique irreducible representation $V_{\mu}$ of highest weight $\mu$, and a canonical embedding $G/P \hookrightarrow \P(V_{\mu})$. This is the Pl\"ucker embedding when $G/P = \Gr(r,n)$ and the Wick embedding when $G/P = \mathbb{S}_n$.  
	In these two cases, the decomposition of $G/P$ induced by the toric stratification of $\P(V_{\mu})$ are the matroid and $\Delta$-matroid decompositions of $\Gr(r,n)$ and $\bS_{n}$, respectively, and this fact is crucial to the main results of \cite{CoreySpinor,CoreyGrassmannians}.  The analogous statement is not true for $\Fl(\vec{r},n)$, in particular, the intersection of $\Fl(\vec{r},n)$ with the dense torus of $\P(V_{\mu})$ is not $\Fl^{\circ}(\vec{r},n)$, see Remark \ref{rmk:SchurNoWork}. However, the toric stratification of $\P(\wedge^{r_1}\C^{n}) \times \cdots \times \P(\wedge^{r_s}\C^{n})$  \textit{does} induce the decomposition of $\Fl(\vec{r},n)$ into flag matroid strata.

	To illustrate the utility of Theorem \ref{thm:closedimmersionintro}, we study the initial degenerations of the complete flag variety $\Fl(n):= \Fl((1,2,\ldots,n-1),n)$. In this case, the polytope $\Delta(\vec{r},n)$ is the $(n-1)$--dimensional permutahedron $\Pi_{n}$, the flag matroid strata $\Fl(\vec{\sQ})\subset \Fl(n)$ admit simple parameterizations,  and, for $n\leq 4$, the closed immersion \eqref{eq:closedImmersionIntro} is an isomorphism for each $\vec{\sw}\in \TFl^{\circ}(n)$.  With a careful analysis of the diagrams of flag matroid strata parameterized by the matroidal subdivisions of the permutahedron $\Pi_4$, we prove the following theorem.  
	
	\begin{theorem}
	\label{thm:Flag4intro}
	The initial degenerations of $\Fl^{\circ}(4)$ are smooth and irreducible.
	\end{theorem}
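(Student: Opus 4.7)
My plan is to use the observation stated in the paragraph preceding the theorem: for $n \leq 4$ the closed immersion of Theorem~\ref{thm:closedimmersionintro} is an isomorphism. Consequently, for every $\vec{\sw} \in \TFl^{\circ}(4)$ we have
\begin{equation*}
\init_{\vec{\sw}}\Fl^{\circ}(4) \;\cong\; \varprojlim_{\pQ(\vec{\omega})} \Fl,
\end{equation*}
so it suffices to show that this inverse limit is smooth and irreducible for every coherent matroidal subdivision $\pQ(\vec{\omega})$ of the $3$-dimensional permutahedron $\Pi_4 = \Delta(1,4) + \Delta(2,4) + \Delta(3,4)$. Since the Gr\"obner fan of $\Fl^{\circ}(4)$ is finite, only finitely many combinatorial types of subdivisions arise, and only finitely many inverse limits must be checked.

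First I would enumerate these subdivisions up to the symmetries that act on the problem: the $\Sn{4}$--action permuting the ground set, and the duality involution $(1,2,3)\mapsto(3,2,1)$ coming from orthogonal complementation of subspaces. The maximal cells of such a subdivision are flag matroid polytopes of rank vector $(1,2,3)$ on $[4]$, and these are combinatorially simple to list (a rank-$1$ matroid is a nonempty subset of $[4]$, a coloop-free rank-$2$ matroid on $[4]$ is determined by its parallel classes, and so on), so I can read off the cells and their incidences directly, or compute the relevant part of the secondary fan of $\Pi_4$ using \texttt{polymake} as a consistency check.

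Second, for each representative subdivision I would write out the diagram $\vec{\sQ} \mapsto \Fl(\vec{\sQ})$ explicitly and compute the inverse limit. As the preceding paragraph of the paper notes, the flag matroid strata $\Fl(\vec{\sQ}) \subset \Fl(4)$ admit simple parameterizations, so each inverse limit becomes an iterated fibre product of explicit rational maps. Smoothness and irreducibility can then be read off either by exhibiting the inverse limit as an open subscheme of affine space (after a change of coordinates adapted to $\vec{\sw}$), or by an inductive argument showing that fibre product with a smooth irreducible stratum over a smooth irreducible stratum of lower rank preserves both properties.

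The hardest step will be managing the case analysis: even modulo the $\Sn{4}\times \{\pm 1\}$ symmetry, there are a substantial number of combinatorial types of matroidal subdivisions of $\Pi_4$, and in principle an inverse limit of smooth irreducible schemes can fail either property if the transition maps are sufficiently generic. I would organize the work by handling first the coarse subdivisions with two maximal cells (since these govern how more refined subdivisions are glued), then analyze the finer subdivisions as fibre products over the coarse strata already shown to be smooth and irreducible. If some subdivision resists a clean fibre-product description, the fallback is to compute a Gr\"obner basis for the initial ideal of $\Fl^{\circ}(4)$ in the corresponding maximal cone of $\TFl^{\circ}(4)$ and verify smoothness directly via the Jacobian criterion and irreducibility by a primary decomposition.
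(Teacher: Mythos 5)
Your overall strategy---reduce to showing that each inverse limit $\varprojlim_{\pQ(\vecomega)}\Fl$ is smooth and irreducible, by enumerating the matroidal subdivisions of $\Pi_4$ up to $(\Sn{2}\times\Sn{4})$--symmetry and analyzing each diagram of flag matroid strata via explicit parameterizations of the strata---is exactly the paper's. Two points need repair, though. First, you take as an input that the closed immersion of Theorem \ref{thm:closedimmersionintro} is an isomorphism for $n\leq 4$; in the paper this is a \emph{consequence} of the proof, not a hypothesis, so as written your first step is circular. The correct order of deduction is: show that $\Fl(\vecomega)$ is smooth and irreducible \emph{of dimension $6$}; note that $\init_{\vec{\sw}}\Fl^{\circ}(4)$, being a flat degeneration of $\Fl^{\circ}(4)$, also has dimension $6$; and only then conclude that the closed immersion identifies it with all of $\Fl(\vecomega)$. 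Without the dimension count, smoothness and irreducibility of the ambient inverse limit say nothing about a closed subscheme of it.

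Second, your plan to assemble the inverse limits as iterated fibre products works cleanly only when the relevant index diagram is a tree. The paper first replaces $\pQ(\vecomega)$ by its adjacency graph $\Gamma(\vecomega)$ (Proposition \ref{prop:fullComplex2Graph}), and for several of the $14$ subdivision types this graph contains cycles; a fibre product taken around a cycle can a priori fail to be irreducible, or have the wrong dimension, even when every stratum is smooth and irreducible and every transition map is smooth and dominant with connected fibers. The paper resolves this by exhibiting, for the offending vertices, a map to the product of the two neighbouring strata that is a closed immersion (Example \ref{ex:forClosedImmersion}), and then using the dimension lower bound supplied by Theorem \ref{thm:closedimmersion} to pin down the limit. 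Your remark that the transition maps could be ``sufficiently generic'' identifies the right danger, but the tree-first-then-refine organization you propose does not by itself dispose of it; either the closed-immersion-plus-dimension argument or your Gr\"obner-basis fallback is genuinely needed in those cases.
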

	
	We conclude by studying the action of the diagonal torus $H\subset \PGL(n)$ on the complete flag variety $\Fl(n)$ and the Chow quotient $\chow{\Fl(n)}{H}$. Recall that $H$ acts on $\Gr(r,n)$ in the following way. If $F$ is the row span of the full-rank $(r\times n)$--matrix $A$, and $h\in H$, then $F\cdot h$ is the row-span of the matrix $Ah$. This action preserves inclusion of subspaces, and hence induces an action on $\Fl(\vec{r},n)$:
	\begin{equation}
	\label{eq:HonFl}
	    (F_1\subset \cdots \subset F_s) \cdot h = (F_1\cdot h \subset \cdots \subset F_s\cdot h).
	\end{equation}
	This restricts to a free action on the open locus $\Fl^{\circ}(\vec{r},n)$. 
	In the case of the complete flag variety, the quotient $\Fl^{\circ}(n)/H$ has a modular interpretation, inspired by \cite[p.183]{Hu}.   The group $H$ acts freely and transitively on the set of general 1-dimensional in linear subspaces $F_1\subset \C^n$. So in the orbit $(F_1\subset F_2 \cdots \subset F_{n-1})\cdot H$, there is a unique representative of the form  $(\Zone \subset \widetilde{F}_2 \cdots \subset \widetilde{F}_{n-1})$, where $\bone = (1,\ldots,1)$. Therefore, the quotient $\Fl^{\circ}(n)/H$ may be identified with:
	\begin{equation*}
	    \Fl_{\bone}^{\circ}(n-1) := \{ (F_{2} \subset F_3 \subset \cdots \subset F_{n-1})  \, : \, (\Zone \subset F_{2} \subset F_3 \subset \cdots \subset F_{n-1}) \in \Fl^{\circ}(n)\}.
	\end{equation*}
	
	\begin{theorem}
	\label{thm:chowIntro}
	The Chow quotient $\chow{\Fl(4)}{H}$ is smooth and a log crepant resolution of the log canonical compactification of $\Fl_{\bone}^{\circ}(3)$. 
	\end{theorem}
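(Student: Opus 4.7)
The strategy parallels \cite{CoreyGrassmannians,CoreyLuber}, which established the analogous statement for Chow quotients of $\Gr(3,n)$ with $n \in \{7,8\}$. The plan is to identify $\chow{\Fl(4)}{H}$ with a tropical compactification of $\Fl_{\bone}^{\circ}(3)$ inside a smooth toric variety $Y_{\Sigma}$, where $\Sigma$ is a unimodular refinement of the coarsest fan structure on $\TFl^{\circ}(4)/L$---with $L$ the three-dimensional lineality space induced by the $H$-action---such that the Gr\"obner initial ideal is constant on the relative interior of each cone. The identification of the Chow quotient with such a tropical compactification is standard (after Kapranov and Hacking--Keel--Tevelev); the first task is to produce $\Sigma$ explicitly by refining the decomposition of $\TFl^{\circ}(4)/L$ coming from matroidal subdivisions of $\Pi_{4}$.

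For smoothness, I appeal to Tevelev's criterion: a tropical compactification $\overline{X} \subset Y_{\Sigma}$ is smooth along the toric stratum $O_\sigma$ if and only if $\init_\sigma X$ is smooth. Theorem \ref{thm:Flag4intro} provides exactly this for $\Fl^{\circ}(4)$, and it descends to $\Fl_{\bone}^{\circ}(3)$ because the $H$-action commutes with initial degeneration. Together with unimodularity of $\Sigma$, this yields that $\chow{\Fl(4)}{H}$ is smooth with a simple normal crossings boundary divisor $D$ whose components are indexed by rays of $\Sigma$.

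To produce the log crepant resolution, I would show that $K+D$ is big and that its section ring is finitely generated, so that taking Proj yields a birational morphism $\chow{\Fl(4)}{H} \to M$ onto the log canonical compactification $M$ of $\Fl_{\bone}^{\circ}(3)$. Log crepancy then amounts to verifying that every contracted divisor has discrepancy zero. Since the boundary components correspond to coarsest nontrivial matroidal subdivisions of $\Pi_{4}$---a finite, explicit list---this reduces to a case-by-case analysis: for each contracted divisor, either exhibit a family of rational curves on which $K+D$ has degree zero, or directly identify it as the exceptional divisor of a crepant blowup.

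The main obstacle is this last step. Classifying the contracted divisors and verifying zero discrepancy on each requires detailed control of the combinatorics of matroidal subdivisions of $\Pi_{4}$ together with intersection computations on the inverse limits $\varprojlim_{\pQ(\vec{\omega})} \Fl$ supplied by Theorem \ref{thm:closedimmersionintro}. Divisors parameterizing positive-dimensional moduli of flag matroids will likely be the most delicate, since one must verify that the restriction of $K+D$ to such a component is pulled back from the moduli, ensuring fibers of the contraction have zero degree.
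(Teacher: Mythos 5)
Your smoothness argument is essentially the paper's: the normalized Chow quotient is the closure of $\Flo{4}/H$ in the toric variety of the fiber fan $\pF_4=\pF_{\Dr}(\vec{\sQ},\vec{\sa})/L_{\R}$, that fan is strictly simplicial, and the initial degenerations of $\Flo{4}/H$ are smooth and irreducible by Theorem \ref{thm:initFl4} together with the splitting $\init_{\vec{\sw}}\Fl^{\circ}(n)\cong(\init_{\vec{\sw}'}\Fl^{\circ}(n)/H)\times H$, so the compactification is sch\"on, smooth, and simple normal crossings. That part is fine.

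The gap is in the log crepant resolution, and it is exactly the step you flag as the main obstacle. Your plan (take $\mathrm{Proj}$ of the section ring of $K+D$, then check discrepancy zero divisor by divisor) gives you no description of the log canonical model and no mechanism for the discrepancy computations; moreover, exhibiting curves on which $K+D$ has degree zero only certifies non-ampleness of $K+D$ on the source, not that the contraction is log crepant. The paper sidesteps all of this with two tropical inputs. First, it constructs the log canonical model explicitly as another tropical compactification: $\pF_4$ is coarsened to a fan $\pF_4'$ by merging the three pairs of maximal cones in orbit 14 of Table \ref{tab:cones123-4} along their common facets (each pair glues to a convex cone $\R_{\geq 0}\langle \se_{12},\se_{34},\se_1+\se_2+\se_{12},\se_3+\se_4+\se_{34}\rangle+L_{\R}$ and its symmetric images), and Lemma \ref{lem:notPreservedUnderTranslation} verifies computationally that for every cone $\tau$ with $\tau/L_{\R}\in\pF_4'$ the star $|\Star(\tau)|$ is not preserved under translation by any rational subspace; by the Hacking--Keel--Tevelev criterion (in the form of \cite[Proposition~7.1]{CoreySpinor} and \cite[Lemma~7.2]{CoreyGrassmannians}) this identifies the closure of $\Flo{4}/H$ in $X(\pF_4')$ as the log canonical compactification $\Fl_{\bone}^{\lc}(3)$. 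Second, log crepancy of the induced morphism $\chow{\Fl(4)}{H}\to\Fl_{\bone}^{\lc}(3)$ is then automatic: for sch\"on compactifications, a refinement of fans induces a log crepant birational morphism by \cite[Theorem~1.4]{Tevelev}. To complete your proof you need to replace the case-by-case discrepancy analysis with the explicit coarser fan, the translation-invariance check on stars, and these two citations.
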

	
	Here is an outline of the paper.  We begin in \S \ref{sec:matroidalSubdivisionsOfGeneralizedPermutahedra} by reviewing coherent mixed subdivisions of Minkowski sums of point configurations, then discuss flag matroids and their flag Dressians. In \S\ref{sec:flagMatroids}, we describe flag matroid strata scheme-theoretically and prove Theorem \ref{thm:closedimmersionintro}. In \S\ref{sec:flagFaces} we review the constructions of flag matroids that allow us to describe the faces of flag matroid polytopes. We specialize to the complete flag variety in \S\ref{sec:completeFlag}; there we describe how to parameterize complete flag matroid strata. We use this description to prove that the flag matroid strata of $\Fl(4)$ are smooth and irreducible, and that the morphisms between such strata are smooth and dominant with connected fibers. Theorem \ref{thm:Flag4intro} is proved in \S\ref{sec:inverseLimitsCompleteFlag}. In the last section \S\ref{sec:Chow}, we discuss generalities on the Chow quotient $\chow{\Fl(\vec{r},n)}{H}$, then prove Theorem \ref{thm:chowIntro}. There are two appendices.   We summarize the notation in Appendix \ref{app:notation}. In Appendix \ref{app:data}, we record the flag matroid strata in $\Fl(4)$, and the flag matroidal subdivisions of the $3$-dimensional permutahedron. 
	
	\subsection*{Acknowledgments} We thank Michael Joswig and Dhruv Ranganathan for helpful conversations and comments on a previous draft.  
	
	\subsection*{Funding} DC is supported by the SFB-TRR project ``Symbolic Tools in
Mathematics and their Application'' (project-ID 286237555). JAO is supported by  Deutsche Forschungsgemeinschaft (DFG, German Research Foundation) under Germany's Excellence Strategy - The Berlin Mathematics Research Center MATH$^+$ (EXC-2046/1, project ID 390685689). 
	
	\subsection*{Code} We use \texttt{polymake.jl}  \cite{polymake,polymakeJL} and OSCAR \cite{OSCAR-book,OSCAR} to generate the data in Appendix \ref{app:data}. These are used in the proofs of Propositions \ref{prop:flagStrata4}, \ref{prop:flagStrataMaps4}, and Theorem \ref{thm:initFl4}; we also use this software in the proof of Lemma \ref{lem:notPreservedUnderTranslation}.
	The code can be found at the following github repository:
	
	\begin{center}
	    \url{https://github.com/dcorey2814/initialDegenerationsFlagVarieties}
	\end{center}

    \section{Matroidal subdivisions of generalized permutahedra}
	\label{sec:matroidalSubdivisionsOfGeneralizedPermutahedra}

	We begin by recalling Minkowski sums of point configurations and their coherent mixed subdivisions in a general framework. We then discuss their connection to flag matroids via the flag Dressian of \cite{BrandtEurZhang}. 
	
	\subsection{Weighted point configurations}
	\label{sec:weightedPointConfigurations}
	For  $n\geq 1$, let \label{notn:setn} $[n] = \{1,\ldots,n\}$ and \label{notn:allone} $\bone=(1,\ldots,1)$ in any $\Z^{n}$.  Let \label{notn:N} $N = \Z^{n}/\Zone$, \label{notn:M}  $M=\Hom(N,\Z)$, and \label{notn:uv} $\langle \su, \sv \rangle$ the canonical pairing of  $(\su,\sv) \in M\times N$.   A (integral) \label{notn:pointConf} \textit{point configuration} is a pair $(\sQ, \sa)$ where \label{notn:Q} $\sQ$ is a finite set and $\sa:\sQ \to M$ is a function; informally, this is a finite collection of  $\sQ$-labeled points in $M$ where points that coincide are distinguished by their labels. The polytope associated to $(\sQ,\sa)$ is 
	\begin{equation*}
	\label{notn:DeltaQ}
	    \Delta(\sQ,\sa) = \Conv\{ \sa(\lambda) \, : \, \lambda \in \sQ  \}
	\end{equation*}
	When $\sa$ is clear from the context, we drop it from the notation.	We refer the reader to  \cite{DeLoeraRambauSantos} for a comprehensive treatment of point configurations, their faces, and polyhedral subdivisions. Notably, many terms normally associated to polytopes (like face and dimension) are instead applied to the labeling set $\sQ$. 
	
	Let \label{notn:NQ} $N(\sQ) = \Z^{\sQ} / \Zone$. A \textit{weighted point configuration} is a triple \label{notn:omega} $\omega = (\sQ,\sa,\sw)$ where $(\sQ,\sa)$ is a point configuration and $\sw\in N(\sQ)$. Given $\sv\in N_{\R}$, let
	\begin{equation*}
	\label{notn:Qvw}
	\sQ_{\sv}^{\sw} = \{\lambda \in \sQ \, : \, \bk{\sa(\lambda)}{\sv} + \sw_{\lambda} \leq \bk{\sa(\xi)}{\sv} + \sw_{\xi} \text{ for all } y \in \sQ\}.
	\end{equation*}
    When $\sw=0$, we write \label{notn:Qv} $\sQ_{\sv} = \sQ_{\sv}^{0}$; any subset of this form is called a \textit{face} of $(\sQ,\sa)$. The \textit{coherent subdivision associated to} $\omega$ is
	\begin{equation*}
	\label{notn:subdOmega}
	\pQ(\omega) = \{\sQ_{\sv}^{\sw} \, : \, \sv \in N_{\R}\}. 
	\end{equation*} 
	Geometrically, the subdivision $\pQ(\omega)$ is obtained by lifting the points  $\sa(\lambda)$ in $M \times \R$ to heights dictated by $\sw$, and projecting the lower faces back down to $M$.

 	Consider point configurations $(\sQ_i,\sa_i)$ for $i=1\ldots,s$; set  \label{notn:vecQ} $\vec{\sQ} = (\sQ_1,\ldots,\sQ_s)$ and $\vec{\sa} = (\sa_1,\ldots,\sa_s)$.  The \textit{Minkowski sum} of $(\sQ_1,\sa_1),\ldots,(\sQ_s,\sa_s)$, denoted \label{notn:MinkowskiSum} $\MS(\vec{\sQ},\vec{\sa})$, is the point configuration $(\sQ_1\times \ldots \times \sQ_s, \sa)$ where 
 	 \begin{equation*}
 	\sa:\sQ_1 \times \cdots \times \sQ_s \to M \hspace{20pt}
 (\lambda_1, \ldots, \lambda_s) \mapsto \sa_1(\lambda_1) + \cdots + \sa_s(\lambda_s).
	 \end{equation*}
	 Under the natural identification $N(\sQ_1 \times \cdots \times \sQ_s) = (\Z^{\sQ_1} \otimes \cdots \otimes \Z^{\sQ_s}) / \Zone$, define
	 \begin{align}
	 \label{eq:mapphi}
	 \begin{array}{rl}
	 \phi: N(\sQ_1) \times \cdots \times N(\sQ_s) &\to N(\sQ_1 \times \cdots \times \sQ_s) \\
	      (\sw_1,\ldots,\sw_s) &\mapsto (\sw_1 \otimes \bone \otimes \cdots \otimes \bone) + \cdots + (\bone  \otimes \cdots \otimes \bone \otimes \sw_s)
	 \end{array}
	 \end{align}	
 	Consider weighted point configurations  $\omega_i = (\sQ_i, \sa_i, \sw_i)$ for $i=1\ldots,s$ and set \label{notn:vecomega} $\vecomega = (\omega_1,\ldots,\omega_s)$. The \textit{Minkowski sum} of $\vecomega$, denoted $\MS(\vec{\omega})$, is the weighted point configuration $(\sQ,\sa, \sw)$ where $(\sQ,\sa) = \MS(\vec{\sQ},\vec{\sa})$ and 
	 $\sw=\phi(\sw_1,\ldots,\sw_s)$.  The \textit{coherent mixed subdivision associated to} $\vecomega$ is  
	 \begin{equation*}
	 \label{notn:subdVecOmega} 
	 \pQ(\vecomega) := \pQ(\MS(\vecomega)). 
	 \end{equation*}
	  The faces of $\pQ(\vecomega)$ can be described in the following way.  Given $\sv$ in $N_{\R}$ and $\vec{\sw} = (\sw_1,\ldots,\sw_s)$ in $N(\sQ_1)\times \cdots \times N(\sQ_s)$, then, with $\MS(\vec{\sQ},\vec{\sa}) = (\sQ,\sa)$, we have 
	  \begin{equation}
	    \label{eq:facesMS}
    	(\sQ_{\sv}^{\phi(\vec{\sw})}, \sa) = \MS(\vec{\sQ}_{\sv}^{\vec{\sw}}, \vec{\sa}) \hspace{20pt} \text{where} \hspace{20pt} \vec{\sQ}_{\sv}^{\vec{\sw}} = ((\sQ_{1})_{\sv}^{\sw_1}, \ldots, (\sQ_{s})_{\sv}^{\sw_s}).
	  \end{equation}
	
	\begin{remark}
	Strictly speaking, $\vecomega$ denotes a sequence of weighted point configurations $(\omega_1,\ldots,\omega_s)$. Nevertheless, it is useful to use the shorthand $\vecomega = (\vec{\sQ}, \vec{\sa}, \vec{\sw})$ where $\vec{\sQ} = (\sQ_1,\ldots,\sQ_s)$, $\vec{\sa} = (\sa_1, \ldots, \sa_s)$, and $\vec{\sw} = (\sw_1, \ldots, \sw_s)$.
	\end{remark}
	
	\begin{remark}
	    The definitions made here are consistent with the standard ones as described in \cite{DeLoeraRambauSantos}. For instance, the coherent subdivision $\pQ(\omega)$ agrees with Definition 2.2.10 of \textit{loc. cit.}, and the coherent mixed subdivision $\pQ(\vecomega)$ agrees with Definition 9.2.7 of  \textit{loc. cit.}, see also \cite[\S~2.2]{HuberRambauSantos}. 
	\end{remark}
	
		\begin{figure}[tbh!]
        \centering
        \includegraphics[height=5cm]{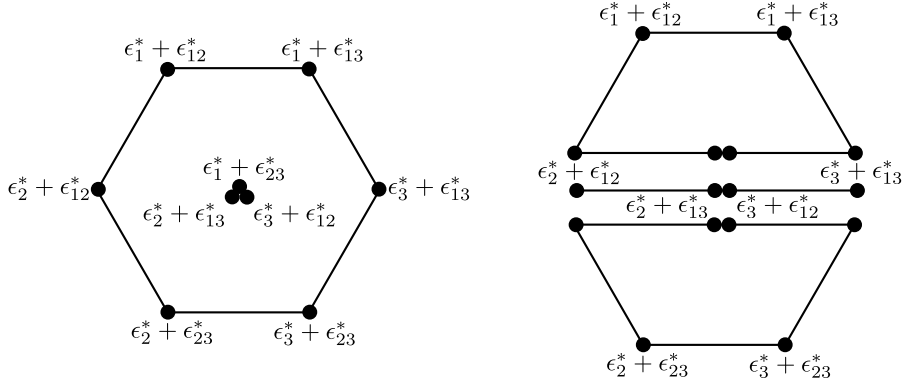}
        \caption{A coherent mixed subdivision}
         \label{fig:hexagon}
    \end{figure}
    
	\begin{example}
	\label{ex:hexagon}
        Consider the point configurations $(\sQ_1, \sa_1)$ and $(\sQ_2,\sa_2)$ where $\sQ_1 = \{1,2,3\}$, $\sQ_{2} = \{12, 13, 23\}$, and
        \begin{equation*}
         \sa_{i}: \sQ_{i} \to M \hspace{20pt} \lambda \mapsto \epsilon_{\lambda}^{*}  
        \end{equation*}
        Set $\vec{\sQ} = (\sQ_1, \sQ_2)$ and $\vec{\sa} = (\sa_1, \sa_2)$. 
        The Minkowski sum $\MS(\vec{\sQ}, \vec{\sa})$ is illustrated in Figure \ref{fig:hexagon}(left).  Now let $\vec{\sw} = \se_1$ and $\vecomega = (\vec{\sQ}, \vec{\sa}, \vec{\sw})$. Then 
        \begin{equation}
            \phi(\vec{\sw}) = \se_1 \otimes \se_{12} + \se_1 \otimes \se_{13} + \se_1 \otimes \se_{23}.
        \end{equation}
        The function $\phi(\vec{\sw})$ lifts the points $\epsilon_1^{*} + \epsilon_{12}^{*}$, $\epsilon_1^{*} + \epsilon_{13}^{*}$,  and $\epsilon_1^{*} + \epsilon_{23}^{*}$ to height 1, and  the coherent mixed subdivision $\pQ(\vecomega)$ is illustrated in Figure \ref{fig:hexagon} (right). Observe that $\epsilon_1^{*} + \epsilon_{23}^{*}$ does not appear in any subpoint configuration of the subdivision. 
	\end{example}

	The various subdivisions $\pQ(\vecomega)$ define a complete polyhedral fan $\pF(\vec{\sQ},\vec{\sa})$ in $N(\sQ_1) \times \cdots \times N(\sQ_s)$ which is a special instance of the \textit{fiber fan} (as we describe later in this section):  $\vec{\sw}$ and $\vec{\sw}'$ are contained in the relative interior of the same cone of $\pF(\vec{\sQ},\vec{\sa})$ if $\pQ(\vecomega) = \pQ(\vecomega')$, where $\vecomega = (\vec{\sQ}, \vec{\sa}, \vec{\sw})$ and $\vecomega' = (\vec{\sQ}, \vec{\sa}, \vec{\sw}')$. When $(\vec{\sQ},\vec{\sa})$ consists of a single point configuration $(\sQ,\sa)$, we simply write $\pF(\sQ,\sa)$ for this fan.

	The fan $\pF(\vec{\sQ}, \vec{\sa})$ is preserved under translation by a linear space that we now describe. Denote by \label{notn:se} $\se_{\lambda}$ for $\lambda \in \sQ$ the images of the standard bases vectors under $\Z^{\sQ} \to N(\sQ)$. 
	Define maps $b_i:N \to N(\sQ_i)$ by 
	\begin{equation*}
	    b_i(\sv) = \sum_{\lambda\in \sQ_i} \bk{\sa_i(\lambda)}{\sv} \; \se_{\lambda}. 
	\end{equation*}
	That is, $b_i(\sv)$ is the weight vector on $(\sQ_i,\sa_i)$ induced by the linear function $\sv$ on $M$, and hence the subdivision associate to $(\sQ_i,\sa_i,b_i(\sv))$ is trivial. 
	Let $L$ denote saturation of the image of the map
	\begin{equation}
	\label{eq:definingL}
	    N \to N(\sQ_1) \times \cdots \times N(\sQ_s) \hspace{20pt} \sv \mapsto (b_1(\sv),\ldots,b_s(\sv)).
	\end{equation}
	Then \label{notn:fiberFan} $\pF(\vec{\sQ},\vec{\sa})$ is preserved under translation by $L_{\R}$. If the polytope $\Delta(\MS(\vec{\sQ}, \vec{\sa}))$ affinely spans $M$, then $L_{\R}$ is the lineality space of $\pF(\vec{\sQ},\vec{\sa})$, and hence $\pF(\vec{\sQ},\vec{\sa})/ L_{\R}$ is a pointed polyhedral fan. This fan is closely related to the fiber polytope \cite{BilleraSturmfels}, as we now describe; this characterization is essential for our study of the Chow quotient $\Fl(\vec{r},n)$ in \S\ref{sec:Chow}. Let \label{notn:MQ} $M(\sQ) = \Hom(N(\sQ),\Z)$, and \label{notn:sedual} $\se_{\lambda}^{*}$ the dual of $\se_{\lambda}$. The simplex $\Simp(\sQ)$ in $M(\sQ)$ is given by
    \begin{equation*}
        \Simp(\sQ) = \Conv\{\se_{\lambda}^{*} \, : \, \lambda \in \sQ \}.
	\end{equation*}
	Given point configurations $(\sQ_1,\sa_1), \ldots, (\sQ_s,\sa_s)$, consider the projection
	\begin{equation}
	\label{eq:fiberPolytopeMap}
	    \pi:M(\sQ_1) \times \cdots \times M(\sQ_s) \to M 
	    \hspace{20pt} (\se_{\lambda_1}^{*},\ldots,\se_{\lambda_s}^{*}) \mapsto \sa_1(\lambda_1) + \cdots + \sa_s(\lambda_s),
	\end{equation}
	thus $\pi$ takes the vertices of $\Simp(\sQ_1) \times \cdots \times \Simp(\sQ_s)$ to the points in the configuration $\MS(\vec{\sQ},\vec{\sa})$. To the projection $\pi$ is associated the \textit{fiber polytope} whose faces are in bijection with the coherent mixed (i.e., $\pi$-induced) subdivisions of $\MS(\vec{\sQ},\vec{\sa})$, and its normal fan is naturally identified with $\pF(\vec{\sQ},\vec{\sa})/ L_{\R}$.

\subsection{Flag matroids}
    We assume that the reader is familiar with basic notions of matroid theory; see \cite{Oxley} for a general reference. There are many ways to characterize a matroid, we emphasize the definition via bases. Given $1\leq r \leq n$, denote by \label{notn:binomnr} $\binom{[n]}{r}$ the  $r$-element subset of $[n]$.  A $(r,n)$--\textit{matroid} is a nonempty subset  $\sQ$ of $\binom{[n]}{r}$ that satisfies the basis-exchange axiom. Of particular importance is the matroid of a $r$-dimensional linear subspace of $F\subset \C^n$.  Denote by $\epsilon_1,\ldots,\epsilon_n$ the standard basis of $\Z^n\subset \C^{n}$, and let $\pi_F:\C^{n} \to \C^{n}/F$ be the quotient map. Define $\sQ^*(F)$ to be the matroid of the vector configuration $\{\pi_F(\epsilon_i) \, : \, i=1,\ldots,n \}$, i.e., a subset $\lambda \subset [n]$ is a basis of $\sQ^*(F)$ if and only if the set of vectors $\{\pi_F(\epsilon_i) \, : \, i\in \lambda\}$ is a vector space basis of $\C^{n} / F$. The \textit{matroid of $F$}, written $\sQ(F)$, is the dual of  $\sQ^{*}(F)$, and a matroid of the form $\sQ(F)$ is \textit{$\C$-realizable}. See Formula \ref{eq:realizableMatroid} for an alternate characterization of $\sQ(F)$.

	Let $\sQ_1$ and $\sQ_2$ be matroids on $[n]$ of ranks $r_1\leq r_2$, respectively. The matroid $\sQ_1$ is a \textit{quotient} of $\sQ_2$, written $\sQ_2\onto \sQ_1$, if every flat of $\sQ_1$ is a flat of $\sQ_2$. If $F_1\subset F_2$ are linear subspaces of $\C^{n}$ of dimensions $r_1$ and $r_2$ respectively, then the quotient map $\C^{n}/F_1 \onto \C^{n}/F_2$ induces a matroid quotient $\sQ^{*}(F_1) \onto \sQ^{*}(F_2)$, which yields a matroid quotient $\sQ(F_2) \onto \sQ(F_1)$ by duality. 
	
	Given an increasing sequence of integers $\vec{r} = (r_1,\ldots,r_s)$ less than $n$, a $(\vec{r},n)$-\textit{flag matroid} is a sequence of matroids $\vec{\sQ} = (\sQ_1,\ldots,\sQ_s)$ on $[n]$ such that $\sQ_j$ has rank $r_j$ and $\sQ_i$ is a quotient of $\sQ_j$ for $i\leq j$. The \textit{bases} of $\vec{\sQ}$ is the set
	\begin{equation*}
    \label{notn:basesFlagMatroid}
	\pB(\vec{\sQ}) = \bigcup_{j=1}^{s} \sQ_j.
	\end{equation*}
	Given a flag $F_* = (F_1\subset \ldots \subset F_{s}\subset \C^{n})$ in $\Fl(\vec{r},n)$, its flag matroid is 
	\begin{equation}
	\label{eq:flagMatroidSubspaces}
	\vec{\sQ}(F_*) = (\sQ(F_1),\ldots,\sQ(F_s)).
	\end{equation}
	
	\subsection{The flag Dressian}
	Denote by \label{notn:epsilon} $\epsilon_1,\ldots,\epsilon_n$ the images of the standard basis vectors under the quotient $\Z^{n} \to N$ and \label{notn:epsilondual} $\epsilon_{i}^{*}\in M$ the dual of $\epsilon_i$. Given $\lambda = \{i_1,\ldots,i_r\} \in \binom{[n]}{r}$, we set \label{notn:epsilonlambda} $\epsilon_{\lambda} = \epsilon_{i_1} + \cdots + \epsilon_{i_r}$ and  $\epsilon_{\lambda}^{*} = \epsilon_{i_1}^* + \cdots + \epsilon_{i_r}^*$.

	Let $\sQ$ be a $(r,n)$--matroid. Its point configuration is $(\sQ,\sa)$ where $\sa(\lambda) = \epsilon_{\lambda}^{*}$; any point configuration of this form is said to be \textit{matroidal}. The polytope $\Delta(\sQ,\sa)$, which we abbreviate to $\Delta(\sQ)$ when $\sQ$ is a matroid, is the \textit{matroid polytope} of $\sQ$. 
	Let $\sw\in N(\sQ)_{\R}$ and $\omega = (\sQ,\sa,\sw)$. The coherent subdivision $\pQ(\omega)$ is \textit{matroidal} if, for any $\sv\in N_{\R}$, the subset $\sQ_{\sv}^{\sw} \subset \sQ$ is a matroid. Explicitly,  
	\begin{equation}
	\label{eq:Qvw}
	\sQ_{\sv}^{\sw} = \{ \lambda \in \sQ \, : \, \bk{\epsilon_{\lambda}^*}{\sv} + \sw_{\lambda} \leq \bk{\epsilon_{\xi}^{*}}{\sv} + \sw_{\xi} \text{ for all } \xi \in \sQ \}
	\end{equation} 
	The \textit{Dressian} of $\sQ$ \cite{HerrmannJensenJoswigSturmfels}, written $\Dr(\sQ)$, is 
	\begin{equation*}
	\Dr(\sQ) = \{\sw \in N(\sQ)_{\R} \, : \, \pQ(\omega) \text{ is matroidal, where } \omega=(\sQ,\sa,\sw)\}
	\end{equation*}
	which is the support of a subfan $\pF_{\Dr}(\sQ)$ of $\pF(\sQ,\sa)$. 
	
	Now let $\vec{\sQ} = (\sQ_1,\ldots,\sQ_s)$ be a flag matroid. Its point configuration is the Minkowski sum $\MS(\vec{\sQ},\vec{\sa})$. The polytope $\Delta(\MS(\vec{\sQ},\vec{\sa}))$ is called a \textit{flag matroid polytope}, and is denoted by $\Delta(\vec{\sQ})$. 
	Let $\vec{\sw} \in N(\sQ_1)_{\R} \times \dots \times N(\sQ_s)_{\R}$ and $\vecomega=(\vec{\sQ},\vec{\sa},\vec{\sw})$. The subdivision $\pQ(\vecomega)$ is \textit{matroidal} if for any $\sv\in N_{\R}$, the sequence $\vec{\sQ}_{\sv}^{\vec{\sw}} = ((\sQ_1)_{\sv}^{\sw_1}, \ldots, (\sQ_s)_{\sv}^{\sw_s})$ is a flag matroid. The sequence $\vec{\sQ} = (\sQ_1, \sQ_2)$ from Example \ref{ex:hexagon} is a flag matroid, and 	the subdivision $\pQ(\vecomega)$ is matroidal. 
	
	The \textit{flag Dressian} of $\vec{\sQ}$ is the set
	\begin{equation*}
	\label{notn:flagDr}
	\Dr(\vec{\sQ}) = \{\vec{\sw} \in \Dr(\sQ_1)\times \cdots \times \Dr(\sQ_s)  \, : \, \pQ(\vec{\omega}) \text{ is matroidal, where } \vecomega =(\vec{\sQ},\vec{\sa},\vec{\sw})  \}.
	\end{equation*}
	The flag Dressian $\Dr(\vec{\sQ})$ is the support of a subfan $\pF_{\Dr}(\vec{\sQ})$ of $\pF(\vec{\sQ},\vec{\sa})$.

    \begin{remark}
    \label{rmk:matroidPolytope}
    When $\vec{\sQ}=(\sQ_1,\ldots,\sQ_s)$ is the uniform $(\vec{r},n)$--flag matroid, i.e., where each $\sQ_{k}$ is the uniform $(r_k,n)$ matroid, the flag matroid polytope of $\vec{\sQ}$ is $\Delta(\vec{r},n)$ from the introduction.  When $\vec{r} = (1,\ldots,n-1)$, this polytope is the  $(n-1)$--dimensional permutahedron \label{notn:permutahedron} $\Pi_n$. Polytopes of the form $\Delta(\vec{\sQ})$ for $(\vec{r},n)$--flag matroids $\vec{\sQ}$ are exactly the generalized permutahedra whose vertices are among those of $\Delta(\vec{r},n)$.  
    \end{remark}
	
	\begin{remark}
	\label{rmk:valuatedMatroids}
		A vector $\sw \in \Dr(\sQ)$ is a \textit{valuated matroid} with underlying matroid $\sQ$, in the sense of \cite{DressWenzel1992}, see \cite[Proposition~2.2]{Speyer2008} and \cite[\S 4.4]{MaclaganSturmfels2015}.  The flag Dressian is defined in \cite{BrandtEurZhang}, and a tuple of vectors $\vec{\sw} \in \Dr(\vec{\sQ})$ is a \textit{valuated flag matroid}.
	\end{remark}

\section{Flag matroid strata}
    \label{sec:flagMatroids}
    
    The coherent mixed subdivisions $\pQ(\vecomega)$ from the previous section serve as the posets of the finite inverse limits appearing in the Introduction. In this section, we complete the definition of $\varprojlim_{\pQ(\vecomega)} \Fl$ by describing  flag matroid strata and the morphisms between them, and prove Theorem \ref{thm:closedimmersionintro}. 
    
    \subsection{Tropicalization}
In this subsection, we recall the process of tropicalizating over a trivially valued field from the Gr\"obner viewpoint, see \cite[Chapters~2-3]{MaclaganSturmfels2015} for a detailed introduction. 

Denote by \label{notn:TQ}$T(\sQ) = \GG_m^{\sQ} / \GG_m$ the torus with cocharacter lattice $N(\sQ) = \Z^{\sQ}/\Zone$. Let $I \subset \C[x_{\lambda} \, : \, \lambda \in \sQ]$ be a homogeneous prime ideal that does not contain a monomial. Its extension  $I^{\circ}$ to  $\C[x_{\lambda}^{\pm} \, : \, \lambda \in \sQ]$ defines an closed irreducible subvariety  $X^{\circ}$ of $T(\sQ)$. Given $\sw \in N(\sQ)_{\R}$ and $f\in \C[x_{\lambda}^{\pm} \, : \, \lambda \in \sQ]$, the \textit{initial form} of $f$ is
\begin{equation*}
\init_{\sw} f = \sum_{\substack{\su:\bk{\su}{\sw} \\ \text{ is minimal}}} c_{\su} x^{\su}
\hspace{20pt} \text{ where } \hspace{20pt} f = \sum c_{\su} x^{\su}.
\end{equation*}
The $\sw$-\textit{initial ideal} of $I$, respectively $I^{\circ}$, is 
\begin{equation*}
\init_{\sw} I = \langle \init_{\sw} f \, : \, f\in I   \rangle, \hspace{20pt}     \init_{\sw} I^{\circ} = \langle \init_{\sw} f \, : \, f\in I^{\circ}   \rangle. 
\end{equation*}
The \textit{tropicalization} of $X^{\circ}$ is
\begin{equation*}
\Trop(X^{\circ}) = \{ \sw \in N(\sQ)_{\R}  \, : \, \init_{\sw} I^{\circ} \neq \langle 1 \rangle  \}
\end{equation*}
Given $\sw \in \Trop(X^{\circ})$, the $\sw$-\textit{initial degeneration} of $X^{\circ}$ is the subscheme of $T(\sQ)$ cut out by $\init_{\sw}I^{\circ}$. 
The \textit{Gr\"obner fan} of $I$ is a complete rational polyhedral fan in $N_{\R}$ where $\sw$ and $\sw'$ belong to the relative interior of the same cone if $\init_{\sw}I = \init_{\sw'}I$. Thus $\Trop(X^{\circ})$ is the support of a subfan of the Gr\"obner fan of $I$ and there are finitely many distinct $\init_{\sw} X^{\circ}$.

	\subsection{Flag matroid strata}
    As a set, the Grassmannian $\Gr(r,n)$ consists of the $r$-dimensional vector subspaces of $\C^{n}$. It may be realized as a closed subvariety of $\P(\wedge^{r}\C^n)$ via the Pl\"ucker embedding
	\begin{equation*}
	\iota_{r,n}: \Gr(r,n) \hookrightarrow \P(\wedge^{r}\C^n) \hspace{30pt} F \mapsto [\zeta_{\lambda}(F) \, : \, \lambda \in \textstyle{\binom{[n]}{r}}].
	\end{equation*}
	The homogeneous coordinates $\zeta_{\lambda}(F)$ are called the \textit{Pl\"ucker coordinates} of $F$. Implicit in this description is the natural identification of $\wedge^{r}\C^{n}$ with $\C^{\binom{[n]}{r}}$. In terms of Pl\"ucker coordinates, the matroid $\sQ(F)$ of a subspace $F\subset \C^{n}$ defined in \S \ref{sec:matroidalSubdivisionsOfGeneralizedPermutahedra} is
	\begin{equation}
	\label{eq:realizableMatroid}
	\sQ(F) = \{\lambda \in \textstyle{\binom{[n]}{r}} \, : \, \zeta_{\lambda}(F) \neq 0 \};
	\end{equation}

    Given $\sQ\subset \binom{[n]}{r}$, we identify $T(\sQ) = \GG_{m}^{\sQ}/\GG_m$ with the dense torus of the coordinate stratum $\P(\C^{\sQ}) \subset \P(\wedge^r\C^n)$. The \textit{stratum} of a  $(r,n)$--matroid $\sQ$, denoted by $\Gr(\sQ)$, is the scheme-theoretic intersection
	\begin{equation*}
	\Gr(\sQ) = \Gr(r,n) \cap T(\sQ).
	\end{equation*}
	The $\C$-valued points of $\Gr(\sQ)$ correspond to $r$-dimensional subspaces of $F\subset \C^{n}$ such that $\sQ(F) = \sQ$. 
	
	Fix integers $1\leq r_1 < \cdots < r_s < n$ and set $\vec{r} = (r_1,\ldots,r_s)$. As a set, the \textit{flag variety}  $\Fl(\vec{r},n)$ is
	\begin{equation*}
	\Fl(\vec{r},n) = \{F_* = (F_1\subset \cdots \subset F_s \subset \C^{n}) \, : \, \dim F_i = r_i   \}
	\end{equation*}
	The embedding 
	\begin{equation*}
	\Fl(\vec{r},n) \hookrightarrow \P(\wedge^{r_1}\C^{n}) \times \cdots \times \P(\wedge^{r_s}\C^{n}) \hspace{30pt} F_{*} \mapsto (\iota_{r_1,n}(F_1), \ldots, \iota_{r_s,n}(F_s)).
	\end{equation*}
	realizes $\Fl(\vec{r},n)$ as a multiprojective variety; denote by
	\begin{equation*}
	I_{\vec{r},n} \subset \bigotimes_{j=1}^{s} \C[p_{\lambda} \, : \, \lambda \in \textstyle{\binom{[n]}{r_j}}]
	\end{equation*}
	its (multihomogeneous) ideal. It is generated by the polynomials
	\begin{equation*}
	f_{\mu,\nu} =  \sum_{k\in \nu \setminus \mu} (-1)^{\sgn(k;\mu,\nu)} p_{\mu\cup k} p_{\nu \setminus k}
	\end{equation*}
	where $|\nu\setminus \mu| \geq 3$, $|\mu| = r_{i}-1$, and $|\nu| = r_{j} + 1$; here $\sgn$ is the sign function defined in \cite[p.169]{MaclaganSturmfels2015}. 
	
	 Let $\vec{\sQ} = (\sQ_1,\ldots,\sQ_s)$ be a $(\vec{r},n)$--flag matroid. As before, denote by $T(\sQ_i)$ the dense torus of the coordinate stratum $\P(\C^{\sQ}) \subset \P(\wedge^{r_i}\C^{n})$, so $T(\sQ_1) \times \cdots \times T(\sQ_s) \cong (\GG_m^{\sQ_1} \times \cdots \times \GG_{m}^{\sQ_s} ) / \GG_m^s$. The \textit{stratum} of $\vec{\sQ}$, denoted by $\Fl(\vec{\sQ})$, is the scheme-theoretic intersection 
	\begin{equation*}
	\label{notn:FlvecQ}
	\Fl(\vec{\sQ}) = \Fl(\vec{r},n) \cap (T(\sQ_1) \times \cdots \times T(\sQ_s)).
	\end{equation*}
	and its $\C$-valued points correspond to the flags $F_{*}$ in $\Fl(\vec{r},n)$ such that $\vec{\sQ}(F_*) = \vec{\sQ}$, see Formula \eqref{eq:flagMatroidSubspaces}. 
	
	\begin{remark}
	\label{rmk:SchurNoWork}
	One may realize $\Fl(\vec{r},n)$ as a $G/P$ where $G=\SL(n)$ and $P$ is a standard parabolic subgroup.  Associated to $P$ is a weight $\mu$ and a partition $\lambda$. The irreducible representation $V_{\mu}$ of $G$ with highest weight $\mu$ is the Schur functor $\boldS_{\lambda}(\C^{n}) \subset \wedge^{r_1}\C^{n} \otimes \cdots \otimes \wedge^{r_s}\C^{n}$, and the canonical embedding $G/P \hookrightarrow \P(V_{\mu})$ is  $\Fl(\vec{r},n) \hookrightarrow \P(\boldS_{\lambda}\C^{n})$.  Concretely, $\Fl(\vec{r},n)$ embeds into $\P(\wedge^{r_1}\C^{n} \otimes \cdots \otimes \wedge^{r_s}\C^{n})$ by composing $\Fl(\vec{r},n)\hookrightarrow \P(\wedge^{r_1}\C^{n}) \times \cdots \times \P(\wedge^{r_s}\C^{n})$ with the Segre embedding $\P(\wedge^{r_1}\C^{n}) \times \cdots \times \P(\wedge^{r_s}\C^{n}) \hookrightarrow \P(\wedge^{r_1}\C^{n} \otimes \cdots \otimes \wedge^{r_s}\C^{n})$, and the linear span of its image is exactly $\P(\boldS_{\lambda}\C^{n})$ \cite[\S 9]{FultonYT}. Using this description, one readily verifies that, e.g., the intersection of $\Fl(\vec{r},n)$ with the dense torus of $\P(\boldS_{\lambda}\C^{n})$ is \textit{not} $\Fl^{\circ}(\vec{r},n)$ when $\vec{r}$ has at least $2$ components (even for $\Fl((1,2),3)$). Nevertheless, the decomposition of $\Fl(\vec{r},n)$ into flag matroid strata is induced by the toric stratification of the multi-projective space $\P(\wedge^{r_1}\C^{n}) \times \cdots \times \P(\wedge^{r_s}\C^{n})$.   
 	\end{remark}

	Let us describe the coordinate ring of $\Fl(\vec{\sQ})$. Define
	\begin{itemize}
		\item[-] $B_{\vec{\sQ}} = \C[p_{\lambda} \, : \, \lambda \in \sQ_1] \otimes \cdots \otimes \C[p_{\lambda} \, : \, \lambda \in \sQ_s]$; 
		\item[-] $I_{\vec{\sQ}} = (I_{\vec{r},n} + \langle p_{\lambda} \, : \, \lambda \notin \pB(\vec{\sQ}) \rangle) \cap B_{\vec{\sQ}}$;
		\item[-] $S_{\vec{\sQ}} = \langle p_{\lambda} \, :\, \lambda \in \pB(\vec{\sQ}) \rangle_{\semigp}$.
	\end{itemize}
	Then the coordinate ring of $\Fl(\vec{\sQ})$ is the $\GG_m^s$-invariants subring of $R_{\vec{\sQ}} := S_{\vec{\sQ}}^{-1}B_{\vec{\sQ}} / I_{\vec{\sQ}}$. The ideal $I_{\vec{\sQ}}$ is generated by the polynomials
	\begin{equation}
	\label{eq:flagMatroidIdealGens}
	f_{\mu,\nu}(\vec{\sQ}) =  \sum_{\substack{k\in \nu \setminus \mu \\ \mu \cup k, \nu \setminus k \in \pB(\vec{\sQ})}  \\  } (-1)^{\sgn(k;\mu,\nu)} p_{\mu\cup k} p_{\nu \setminus k}
	\end{equation}
	where $i\leq j$, $|\nu \setminus \mu| \geq 3$, $\mu$ is an independent set of $\sQ_{i}$ with $|\mu| = r_i-1$, and $\nu$ has rank $r_j$ in $\sQ_j$ with $|\nu|= r_j + 1$.
	
\begin{remark}
\label{rmk:repeatedRanks}
At times, especially in \S \ref{sec:flagFaces}, it is convenient to relax the strict inequalities $r_i<r_j$ to $r_i\leq r_j$ in the definition of $\Fl(\vec{r},n)$ and $(\vec{r},n)$--flag matroids. This does not add any generality. For example, if $r_{i}=r_{i+1}$, then the $s$-step flag $(F_1\subset \cdots \subset F_{i} \subset F_{i+1} \subset \cdots \subset F_s)$ in $\Fl(\vec{r},n)$ is canonically identified with the $(s-1)$--step flag $(F_1\subset \cdots \subset F_{i-1} \subset F_{i+1} \subset \cdots \subset F_s)$ as $F_{i}=F_{i+1}$. Therefore, $\Fl(\vec{r},n) = \Fl(\vec{r}',n)$ where $\vec{r}'$ is obtained by removing repeated elements in $\vec{r}$. If $\sQ$ and $\sQ'$ are $(r,n)$--matroids such that $\sQ'$ is a quotient $\sQ$, then $\sQ = \sQ'$. Thus, a $(\vec{r},n)$--flag matroid $\vec{\sQ}$ is canonically identified with a $(\vec{r}',n)$--flag matroid $\vec{\sQ}'$ obtained by removing repeated matroids, and together with the previous observation,  $\Fl(\vec{\sQ})$ is canonically identified with $\Fl(\vec{\sQ}')$. 
\end{remark}

Recall that the diagonal torus $H<\PGL(n)$ acts on $\Fl(\vec{r},n)$ as in Formula \eqref{eq:HonFl}. In fact, this action restricts to each stratum $\Fl(\vec{\sQ})$, and we can understand this scheme-theoretically in the following way. Consider the $N = \Z^{n} / \Zone$ grading 
	\begin{equation}
	\label{eq:Ndeg}
	    \deg_{N}(p_{\lambda}) = \sum_{i\in \lambda} \epsilon_{i}
	\end{equation}
	The following proposition is clear given the quadric generators $f_{\mu,\nu}(\vec{\sQ})$ of $I_{\vec{\sQ}}$ in Formula \eqref{eq:flagMatroidIdealGens}.
	\begin{proposition}
	\label{prop:LHomogeneous}
	    The ideal $I_{\vec{\sQ}}$ is homogeneous with respect to the $N$-grading.
	\end{proposition}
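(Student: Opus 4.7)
The plan is a direct verification that each generator $f_{\mu,\nu}(\vec{\sQ})$ of $I_{\vec{\sQ}}$ is $N$-homogeneous, from which $N$-homogeneity of the ideal is immediate. Since the $f_{\mu,\nu}(\vec{\sQ})$ are quadratic, it suffices to show that all monomials $p_{\mu\cup k}p_{\nu\setminus k}$ appearing (for varying $k\in\nu\setminus\mu$) share a common $N$-degree.

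The key observation is the cancellation of $\epsilon_k$. Writing the contribution of $p_{\mu\cup k}p_{\nu\setminus k}$ explicitly and using that $k\in\nu\setminus\mu$ (so $k\in\mu\cup k$ and $k\notin\nu\setminus k$), I would partition the indices into the four classes $\mu\setminus\nu$, $(\nu\setminus\mu)\setminus\{k\}$, $\{k\}$, and $\mu\cap\nu$, and count the multiplicity of each $\epsilon_i$ in $\deg_N(p_{\mu\cup k})+\deg_N(p_{\nu\setminus k})$. The first three classes contribute multiplicity one, and indices in $\mu\cap\nu$ (which are automatically distinct from $k$) contribute multiplicity two. Summing, one gets
\begin{equation*}
\deg_N(p_{\mu\cup k}\,p_{\nu\setminus k}) = \sum_{i\in\mu}\epsilon_i \;+\; \sum_{i\in\nu}\epsilon_i,
\end{equation*}
which is manifestly independent of $k$. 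Hence $f_{\mu,\nu}(\vec{\sQ})$ is homogeneous of this $N$-degree.

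There is no real obstacle here; the only subtlety is to be careful that $\mu$ and $\nu$ are not assumed disjoint in the definition of the Plücker relation, so elements of $\mu\cap\nu$ must be accounted for. Once the generators are shown to be $N$-homogeneous, the conclusion follows because an ideal generated by homogeneous elements (with respect to any grading on the ambient polynomial ring) is itself homogeneous, and the grading \eqref{eq:Ndeg} is a well-defined grading on $\bigotimes_j \C[p_\lambda:\lambda\in\binom{[n]}{r_j}]$ that descends to $B_{\vec{\sQ}}$.
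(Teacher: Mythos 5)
Your proof is correct and takes exactly the route the paper intends: the paper simply declares the proposition ``clear given the quadric generators $f_{\mu,\nu}(\vec{\sQ})$,'' and your computation that $\deg_N(p_{\mu\cup k}\,p_{\nu\setminus k}) = \sum_{i\in\mu}\epsilon_i + \sum_{i\in\nu}\epsilon_i$ independently of $k$ is precisely the arithmetic being left implicit. Your care about $\mu\cap\nu$ is well placed but ultimately harmless, since adding $\epsilon_k$ to $\deg_N(p_\mu)$ and removing it from $\deg_N(p_\nu)$ cancels regardless of the overlap.
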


	\subsection{Inverse limits of flag matroid strata}
	\label{sec:inverseLimitsAndInitialDegenerations}
    Given a $(r,n)$--matroid $\sQ$, the tropicalization of $\Gr(\sQ)$ is denoted by $\TGr(\sQ) \subset N(\sQ)_{\R}$. Similarly, given a flag matroid $\vec{\sQ} = (\sQ_1,\ldots,\sQ_s)$, the tropicalization of $\Fl(\vec{\sQ})$ is denoted by
    \begin{equation*}
    \TFl(\vec{\sQ}) \subset N(\sQ_1)_{\R} \times \cdots \times N(\sQ_s)_{\R}.
    \end{equation*}
    We have inclusions $\TGr(\sQ) \subset \Dr(\sQ)$ \cite{HerrmannJensenJoswigSturmfels, Speyer2008} and $\TFl(\vec{\sQ}) \subset \Dr(\vec{\sQ})$ \cite{BrandtEurZhang}. In general this inclusion is strict, but we do have an equality in some cases, e.g., when $n\leq 5$ and $\vec{\sQ}$ is the uniform $((1,\ldots,n-1),n)$--flag matroid. Both the Gr\"obner fan on $\TGr(\vec{\sQ})$ and the fiber fan on $\Dr(\vec{\sQ})$ are preserved under translation by the linear subspace $L_{\R}$ (see Formula \eqref{eq:definingL}) by Proposition \ref{prop:LHomogeneous} and the discussion in \S\ref{sec:weightedPointConfigurations}, respectively.
    
    Define a partial order on the set of $(\vec{r},n)$--flag matroids by \label{notn:faceOrder} $\vec{\sP} \leq \vec{\sQ}$ if
    $\Delta(\vec{\sP})$ is a face of $\Delta(\vec{\sQ})$, i.e., there is a $\sv \in N_{\R}$ such that $\vec{\sP} = \vec{\sQ}_{\sv} = ((\sQ_{1})_{\sv}, \ldots, (\sQ_{s})_{\sv})$. 
    Given $\vec{\sw}\in \Dr(\vec{\sQ})$ and $\vecomega = (\vec{\sQ}, \vec{\sa}, \vec{\sw})$, the above partial order turns $\pQ(\vecomega)$ into a poset. The following is the key proposition that allows us to define the face morphisms $\Fl(\vec{\sQ}) \to \Fl(\vec{\sP})$ when $\vec{\sP} \leq \vec{\sQ}$, and the closed immersion in Theorem \ref{thm:closedimmersionintro}.
 
	\begin{proposition}
		\label{prop:init2strata}
		Let $\vec{\sw} = (\sw_1,\ldots,\sw_s) \in \TFl(\vec{\sQ})$, $\sv\in N_{\R}$, and set $\vec{\sP} = \vec{\sQ}_{\sv}^{\vec{\sw}}$. The inclusion $B_{\vec{\sP}} \subset B_{\vec{\sQ}}$ induces a morphism $\varphi_{\omega,\sv}: \init_{\vec{\sw}}\Fl(\vec{\sQ}) \to \Fl(\vec{\sP})$. 
	\end{proposition}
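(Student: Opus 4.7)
The plan is to exhibit the morphism $\varphi_{\omega,\sv}$ on the level of coordinate rings, induced by the inclusion $B_{\vec{\sP}}\hookrightarrow B_{\vec{\sQ}}$ of polynomial rings (well-defined since $\vec{\sP}=\vec{\sQ}_{\sv}^{\vec{\sw}}$ yields $\sP_i\subseteq \sQ_i$ for every $i$, hence $\pB(\vec{\sP})\subseteq \pB(\vec{\sQ})$). Because $S_{\vec{\sP}}\subseteq S_{\vec{\sQ}}$, this inclusion extends to $S_{\vec{\sP}}^{-1}B_{\vec{\sP}}\hookrightarrow S_{\vec{\sQ}}^{-1}B_{\vec{\sQ}}$ and preserves the $\Z^s$-grading by which the Pl\"ucker coordinates $p_\lambda$ with $\lambda\in \sP_i$ (or $\sQ_i$) carry degree $\se_i$. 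To produce $\varphi_{\omega,\sv}$ it therefore suffices to show that this map sends $I_{\vec{\sP}}$ into $\init_{\vec{\sw}}I_{\vec{\sQ}}^{\circ}$; then the induced map of quotient rings descends to $\GG_m^s$-invariants, giving the morphism of affine schemes. Since $I_{\vec{\sP}}$ is generated by the quadrics $f_{\mu,\nu}(\vec{\sP})$ of \eqref{eq:flagMatroidIdealGens}, one only needs to check each such generator.

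The key computation is to realize $f_{\mu,\nu}(\vec{\sP})$ as the initial form of $f_{\mu,\nu}(\vec{\sQ})$ with respect to a suitable shift of $\vec{\sw}$. Set $\vec{\sw}' = \vec{\sw}+(b_1(\sv),\ldots,b_s(\sv))$. For any pair $(\mu,\nu)$ indexing a generator of $I_{\vec{\sP}}$, $\mu$ is independent in $\sP_i\subseteq \sQ_i$ and $\nu$ has rank $r_j$ in $\sP_j\subseteq \sQ_j$, so $(\mu,\nu)$ also indexes a generator of $I_{\vec{\sQ}}$. The $\vec{\sw}'$-weight of the monomial $p_{\mu\cup k}p_{\nu\setminus k}$ equals
\[
(\sw_i)_{\mu\cup k}+\langle \epsilon_{\mu\cup k}^{*},\sv\rangle +(\sw_j)_{\nu\setminus k}+\langle \epsilon_{\nu\setminus k}^{*},\sv\rangle,
\]
and using $\epsilon_{\mu\cup k}^{*}+\epsilon_{\nu\setminus k}^{*}=\epsilon_{\mu}^{*}+\epsilon_{\nu}^{*}$ (which is independent of $k$), the $k$-dependent part decouples into the two factors. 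Writing $c_{\ell}=\min_{\lambda\in \sQ_\ell}\{(\sw_\ell)_\lambda+\langle \epsilon_\lambda^{*},\sv\rangle\}$, a monomial in $f_{\mu,\nu}(\vec{\sQ})$ has $\vec{\sw}'$-weight at least $c_i+c_j$, with equality if and only if $\mu\cup k\in \sP_i$ and $\nu\setminus k\in \sP_j$. Since the signs $(-1)^{\sgn(k;\mu,\nu)}$ agree in $f_{\mu,\nu}(\vec{\sP})$ and $f_{\mu,\nu}(\vec{\sQ})$ for each common $k$, either $f_{\mu,\nu}(\vec{\sP})=0$ (no $k$ attains the minimum) or $\init_{\vec{\sw}'}f_{\mu,\nu}(\vec{\sQ})=f_{\mu,\nu}(\vec{\sP})$. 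In both cases $f_{\mu,\nu}(\vec{\sP})\in \init_{\vec{\sw}'}I_{\vec{\sQ}}^{\circ}$.

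To close the argument, invoke Proposition \ref{prop:LHomogeneous}: $I_{\vec{\sQ}}$ is $N$-homogeneous, and for an $N$-homogeneous element $f$ of degree $d$, the shift $(b_1(\sv),\ldots,b_s(\sv))$ adds the same constant $\langle d,\sv\rangle$ to the $\vec{\sw}$-weight of every monomial of $f$; hence $\init_{\vec{\sw}'}f=\init_{\vec{\sw}}f$. Applying this on a generating set of $N$-homogeneous elements, $\init_{\vec{\sw}'}I_{\vec{\sQ}}^{\circ}=\init_{\vec{\sw}}I_{\vec{\sQ}}^{\circ}$, so $f_{\mu,\nu}(\vec{\sP})\in \init_{\vec{\sw}}I_{\vec{\sQ}}^{\circ}$ for every generator, producing $\varphi_{\omega,\sv}$. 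The main obstacle is the middle paragraph: one must carry out the initial-form computation carefully enough to see that the cancellation $\epsilon_{\mu\cup k}^{*}+\epsilon_{\nu\setminus k}^{*}=\epsilon_{\mu}^{*}+\epsilon_{\nu}^{*}$ is what decouples the $\vec{\sw}'$-weight into independent contributions from the two factors, matching the minimum-weight condition precisely with the conditions $\mu\cup k\in \sP_i$, $\nu\setminus k\in \sP_j$ that cut out $f_{\mu,\nu}(\vec{\sP})$ from $f_{\mu,\nu}(\vec{\sQ})$.
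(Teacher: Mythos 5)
Your proposal is correct and follows essentially the same route as the paper: reduce to the quadric generators $f_{\mu,\nu}$, use the identity $\epsilon_{\mu\cup k}^{*}+\epsilon_{\nu\setminus k}^{*}=\epsilon_{\mu}^{*}+\epsilon_{\nu}^{*}$ to decouple the weight into the two factors, and match the minimality condition with membership in $\sP_i$ and $\sP_j$. The only (cosmetic) difference is that you package the $\sv$-shift as a translation $\vec{\sw}\mapsto\vec{\sw}+(b_1(\sv),\ldots,b_s(\sv))$ and appeal to the $N$-homogeneity of $I_{\vec{\sQ}}$ to identify the two initial ideals, whereas the paper performs the same cancellation inline for each generator.
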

	
	\begin{proof}
	    With $\vec{\sP} = (\sP_{1},\ldots, \sP_{s})$, we have that $\sP_i = (\sQ_i)_{\sv}^{\sw_i}$, thus the bases of $\sP_i$ can be determined by Formula \eqref{eq:Qvw}. It suffices to show that the extension of $I_{\vec{\sP}}$ to $B_{\vec{\sQ}}$ is contained in $\init_{\vec{\sw}}I_{\vec{\sQ}}$. We use the quadric generators in Formula \eqref{eq:flagMatroidIdealGens}. 
		
		Suppose $f_{\mu,\nu}(\vec{\sP})$ is not identically 0, i.e., there is an $k_0\in \nu\setminus \mu$ such that $\mu\cup k_0 \in \sQ_i$ and $\nu\setminus  k_0 \in \sQ_j$. We must show that $f_{\mu,\nu}(\vec{\sP}) = \init_{\vec{\sw}} f_{\mu,\nu}(\vec{\sQ})$, i.e., if $k\in \nu\setminus \mu$, then $p_{\mu\cup k}p_{\nu\setminus k}$ is a monomial appearing in $f_{\mu,\nu}(\vec{\sP})$ if and only if it is a monomial in $\init_{\vec{\sw}} f_{\mu,\nu}(\vec{\sQ})$. The monomial $p_{\mu\cup k}p_{\nu\setminus k}$ is a monomial of $\init_{\vec{\sw}} f_{\mu,\nu}(\vec{\sQ})$ if and only if 
		\begin{equation}
		\label{eq:minimizew}
		\sw_{\mu\cup k} + \sw_{\nu\setminus k} \leq \sw_{\mu\cup \ell} + \sw_{\nu\setminus \ell}  \hspace{20pt} \text{for all } \ell.
		\end{equation}
		Next, observe that for any $\ell$ we have
		\begin{equation*}
		\bk{\epsilon_{\mu\cup \ell}^{*}}{\sv} + \bk{\epsilon_{\nu\setminus \ell}^{*}}{\sv} = \bk{\epsilon_{\mu}^{*}}{\sv} + \bk{\epsilon_{\nu}^{*}}{\sv} 
		\end{equation*}
		which is independent of $\ell$. Thus Formula \eqref{eq:minimizew} holds if and only if
		\begin{equation*}
		\bk{\epsilon_{\mu\cup k}^{*}}{\sv} + \sw_{\mu\cup k} + \bk{\epsilon_{\nu\setminus k}^{*}}{\sv} + \sw_{\nu\setminus k} \leq 
		\bk{\epsilon_{\mu\cup \ell}^{*}}{\sv} + \sw_{\mu\cup \ell} + \bk{\epsilon_{\nu\setminus \ell}^{*}}{\sv} + \sw_{\nu\setminus \ell}
		\end{equation*}
		for all $\ell$. This is true if and only if
		\begin{equation*}
		\bk{\epsilon_{\mu\cup k}^{*}}{\sv} + \sw_{\mu\cup k} + 
		\bk{\epsilon_{\nu\setminus k}^{*}}{\sv} + \sw_{\nu\setminus k} = 
		\bk{\epsilon_{\mu\cup k_0}^{*}}{\sv} + \sw_{\mu\cup k_0} + 
		\bk{\epsilon_{\nu\setminus k_0}^{*}}{\sv} + \sw_{\nu\setminus k_0}
		\end{equation*}
		which is true if and only if $\mu\cup k \in \sP_i$  and $\nu\setminus k \in \sP_j$, as required. 
	\end{proof}
	
    If $\vec{\sP} \leq \vec{\sQ}$, then by Proposition \ref{prop:init2strata} applied to the case $\vec{\sw} = 0$, we have a face morphism
	\begin{equation}
	\label{eq:faceMap}
	\varphi_{\vec{\sQ},\vec{\sP}} = \Fl(\vec{\sQ}) \to \Fl(\vec{\sP}). 
	\end{equation}
	These face morphisms satisfy
	\begin{equation}
	\varphi_{\vec{\sQ}_1,\vec{\sQ}_3} = \varphi_{\vec{\sQ}_2,\vec{\sQ}_3} \circ \varphi_{\vec{\sQ}_1,\vec{\sQ}_2} \; \text{ if } \; \vec{\sQ}_3 \leq \vec{\sQ}_2 \leq \vec{\sQ}_1, \hspace{10pt} \text{ and } \hspace{10pt} \varphi_{\vec{\sQ},\vec{\sQ}} = \id.
	\end{equation}
	This means that
	\begin{equation*}
	\vec{\sP} \mapsto \Fl(\vec{\sP}) \hspace{20pt} \vec{\sP'} \leq \vec{\sP} \mapsto \varphi_{\vec{\sP},\vec{\sP}'} 
	\end{equation*}
	defines a diagram of type $\pQ(\vecomega)$. Thus we may form the inverse limit
	\begin{equation*}
	\label{notn:Flomega}
	\Fl(\vecomega) := \varprojlim_{\pQ(\vecomega)} \Fl. 
	\end{equation*}
	This is an affine scheme whose coordinate ring is the $\GG_m^s$-invariants subring of $\varinjlim_{\pQ(\vecomega)} R_{\vec{\sP}}$. 
	
	\begin{theorem}
	\label{thm:closedimmersion}
		Let $\vec{\sw} \in \TFl(\vec{\sQ})$ and $\vecomega = (\vec{\sQ}, \vec{\sa}, \vec{\sw})$. The morphisms $\varphi_{\vecomega,\sv}:\init_{\vec{\sw}} \Fl(\vec{\sQ}) \to \Fl(\vec{\sQ}_{\sv}^{\vec{\sw}})$ induce a closed immersion
		\begin{equation}
		\varphi_{\vecomega}: \init_{\vec{\sw}}\Fl(\vec{\sQ}) \hookrightarrow \Fl(\vecomega). 
		\end{equation}
	\end{theorem}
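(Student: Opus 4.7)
The plan is to construct $\varphi_{\vecomega}$ via the universal property of the inverse limit and then prove it is a closed immersion by checking surjectivity of the induced pullback on coordinate rings. The main technical input is Proposition~\ref{prop:init2strata}, which supplies the morphisms $\varphi_{\vecomega, \sv}$ and identifies them concretely with inclusions of Pl\"ucker subrings $B_{\vec{\sP}} \hookrightarrow B_{\vec{\sQ}}$.

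First, I would verify the compatibility needed to apply the universal property: whenever $\sv_1, \sv_2 \in N_{\R}$ induce cells $\vec{\sP}_i := \vec{\sQ}_{\sv_i}^{\vec{\sw}}$ of $\pQ(\vecomega)$ with $\vec{\sP}_1 \leq \vec{\sP}_2$, the two compositions $\varphi_{\vec{\sP}_2, \vec{\sP}_1} \circ \varphi_{\vecomega, \sv_2}$ and $\varphi_{\vecomega, \sv_1}$ both factor through the common inclusion $B_{\vec{\sP}_1} \hookrightarrow B_{\vec{\sP}_2} \hookrightarrow B_{\vec{\sQ}}$, so they coincide. This produces $\varphi_{\vecomega}$ as the unique morphism into the inverse limit.

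Next, I would pass to coordinate rings. The source has coordinate ring the $\GG_m^s$-invariants of $S_{\vec{\sQ}}^{-1} B_{\vec{\sQ}} / \init_{\vec{\sw}} I_{\vec{\sQ}}$, generated as a localized $\C$-algebra by $\{p_\lambda : \lambda \in \pB(\vec{\sQ})\}$; the target has coordinate ring the $\GG_m^s$-invariants of $\varinjlim_{\pQ(\vecomega)} R_{\vec{\sP}}$, generated by $\{p_\lambda : \lambda \in \bigcup_{\vec{\sP} \in \pQ(\vecomega)} \pB(\vec{\sP})\}$. The pullback $\varphi_{\vecomega}^*$ sends each target $p_\lambda$ to the source $p_\lambda$ of the same name. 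Since $\GG_m^s$ is a torus and therefore reductive, surjectivity of a torus-equivariant map descends to invariants, and the closed immersion statement reduces to showing the combinatorial equality
\[
\pB(\vec{\sQ}) \;=\; \bigcup_{\vec{\sP} \in \pQ(\vecomega)} \pB(\vec{\sP}).
\]

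I expect this combinatorial identity to carry the essential content of the proof, though the argument is short. Given $\lambda \in \pB(\vec{\sQ})$ with $\lambda \in \sQ_i$, the point $\epsilon_\lambda^*$ is a vertex of the matroid polytope $\Delta(\sQ_i)$, hence there exists $\sv_0 \in N_{\R}$ with $\bk{\epsilon_\lambda^*}{\sv_0} < \bk{\epsilon_\mu^*}{\sv_0}$ for every $\mu \in \sQ_i \setminus \{\lambda\}$. Rescaling by $t > 0$ sufficiently large makes the resulting gap exceed any additive perturbation coming from $\sw_i$, so $\lambda$ is the unique minimizer of $\mu \mapsto \bk{\epsilon_\mu^*}{t\sv_0} + (\sw_i)_\mu$ on $\sQ_i$, yielding $\lambda \in (\sQ_i)_{t\sv_0}^{\sw_i} \subseteq \pB(\vec{\sQ}_{t\sv_0}^{\vec{\sw}})$. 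Combining this with the reduction above completes the proof that $\varphi_{\vecomega}$ is a closed immersion.
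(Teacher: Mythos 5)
Your proof is correct and follows essentially the same route as the paper: existence of $\varphi_{\vecomega}$ by the universal property, then surjectivity of the induced map on coordinate rings via the fact that every $\lambda\in\pB(\vec{\sQ})$ lies in $\pB(\vec{\sP})$ for some cell $\vec{\sP}$ of $\pQ(\vecomega)$. The only difference is that you supply the short vertex-plus-rescaling argument for that last combinatorial fact, which the paper simply asserts.
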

	
	\begin{proof}
		The map $\varphi_{\vecomega}$ exists by the universal property. Given any $\lambda \in \pB(\vec{\sQ})$, there is a $\vec{\sP}$ defining a face of $\pQ(\vecomega)$ such that $\lambda \in \pB(\vec{\sP})$. Therefore, the induced map
		\begin{equation*}
		(\varphi_{\vecomega})^{\#}:\varinjlim_{\pQ(\vecomega)} R_{\vec{\sP}} \to S_{\vec{\sQ}}^{-1}B_{\vec{\sQ}} / \init_{\vec{\sw}} I_{\vec{\sQ}}
		\end{equation*}
		is surjective, and so $\varphi_{\vecomega}$ is a closed immersion. 
	\end{proof}
	
	\section{Faces of flag matroid polytopes}
	\label{sec:flagFaces}
	Let $\sQ$ be a $(r,n)$--matroid and $\Delta(\sQ)$ its polytope. The faces of $\Delta(\sQ)$ may be described as follows. Given $\sv \in N_{\R}$, set  $\sQ_{\sv} = \sQ_{\sv}^{0}$. The face of $\Delta(\sQ)$ minimized by the vector $\sv$ is $\Delta(\sQ_{\sv})$, and when $\sv= - \epsilon_{\lambda}$ we have 
	\begin{equation*}
		\sQ_{\sv} \cong (\sQ | \lambda) \oplus (\sQ/\lambda)
	\end{equation*}
	 where $\sQ | \lambda$ is the \textit{restriction} of $\sQ$ to $\lambda$, $\sQ/\lambda$ is the \textit{contraction} of $\sQ$ by $\lambda$ and $\oplus$ is the direct sum of matroids. 
	 The direct sum of matroids interacts well with the formation of matroid polytopes and matroid strata:
	 \begin{equation*}
	 \Delta(\sQ_1 \oplus \sQ_2) \cong \Delta(\sQ_{1}) \times \Delta(\sQ_{2}) \hspace{20pt} \text{ and } \hspace{20pt} \Gr(\sQ_1\oplus \sQ_2) \cong \Gr(\sQ_1) \times \Gr(\sQ_2). 
	 \end{equation*}

	 In this section, we recall how this generalizes to flag matroids. Our arguments use the \textit{Bergman fan} $\Berg(\sQ)$ of a $(r,n)$--matroid $\sQ$ \cite{ArdilaKlivans,Sturmfels2002}. 
	 If $\sQ$ is loopless, its Bergman fan $\Berg(\sQ)$ is the support of a fan in $N_{\R}$ whose cones are $\R_{\geq 0} \langle \epsilon_{\lambda_1},\ldots,\epsilon_{\lambda_k} \rangle$ for each 
 	 sequence $\lambda_1 \subsetneq \cdots \subsetneq \lambda_k$ of \textit{proper} flats, i.e., flats whose rank is positive and strictly less than the rank of $\sQ$. Equivalently, $\Berg(\sQ)$ is
 	 \begin{equation*}
 	     \Berg(\sQ) = \{ \sv \in N_{\R} \, : \, \sQ_{\sv} \text{ is loopless}  \}.
 	 \end{equation*}
 	 In general, let $\lp(\sQ)\subset [n]$ be the set of loops of $\sQ$ and $\widetilde{\sQ} = \sQ|([n]\setminus \lp(\sQ))$. Then $\Berg(\sQ)$ is defined to be $\Berg(\widetilde{\sQ})$, which is this is the support of a fan in $\widetilde{N}_{\R}^{\sQ}$ where  $\widetilde{N}^{\sQ} := N/(\Zone + \Z^{\lp(\sQ)})$.  The essential facts we need are contained in the following proposition; these are well-known when the relevant matroids are loopless, and one readily extends them to the general case.  

\begin{proposition}
\label{prop:bergman}
\leavevmode
    \begin{enumerate}
    \item If $\sQ$ is a matroid on $[n]$ and $\lambda \subset [n]$,  then $\Berg(\sQ|\lambda)$ equals the image of  $\Berg(\sQ)$ under the coordinate projection $\widetilde{N}_{\R}^{\sQ} \to \widetilde{N}_{\R}^{\sQ|\lambda}$. 
    
        \item The matroid $\sQ_1$ is a quotient of $\sQ_2$ if and only if $\lp(\sQ_2) \subset \lp(\sQ_1)$ and  $\Berg(\sQ_1) \subset p(\Berg(\sQ_2))$, where $p:\widetilde{N}^{\sQ_2} \to \widetilde{N}^{\sQ_1}$ is the coordinate projection.
         
        \item $\Berg(\sQ_1 \oplus \sQ_2) \cong \Berg(\sQ_1) \times \Berg(\sQ_2)$. 
    \end{enumerate}
\end{proposition}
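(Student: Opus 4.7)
The plan is to reduce each of the three assertions to its loopless version, which is standard from the flag-of-flats description of the Bergman fan \cite{ArdilaKlivans,Sturmfels2002}, and propagate the statements through the definition $\Berg(\sQ) = \Berg(\widetilde{\sQ})$. The reduction rests on three compatibility properties: $\lp(\sQ|\lambda) = \lp(\sQ)\cap \lambda$; if $\sQ_2\onto \sQ_1$ is a matroid quotient then $\lp(\sQ_2)\subset \lp(\sQ_1)$, because a loop of $\sQ_2$ lies in every flat of $\sQ_2$ and hence in every flat of $\sQ_1$; and $\lp(\sQ_1\oplus \sQ_2) = \lp(\sQ_1)\sqcup \lp(\sQ_2)$. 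These ensure that the coordinate projections appearing in (1) and (2) descend to well-defined maps on the loopless quotient lattices $\widetilde{N}^{\sQ}$.

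For part (1) in the loopless case, I would use that the flats of $\sQ|\lambda$ are precisely the sets of the form $F\cap \lambda$ for $F$ a flat of $\sQ$. The coordinate projection sends the ray generator $\epsilon_F$ to $\epsilon_{F\cap \lambda}$, so a cone of $\Berg(\sQ)$ indexed by a chain $F_1\subsetneq \cdots\subsetneq F_k$ of proper flats projects to a (possibly collapsed) chain of flats of $\sQ|\lambda$, and conversely any chain $T_1 \subsetneq \cdots \subsetneq T_k$ of proper flats of $\sQ|\lambda$ lifts via $T_i\mapsto \mathrm{cl}_{\sQ}(T_i)$. For part (2), in the loopless case the matroid quotient condition ``every flat of $\sQ_1$ is a flat of $\sQ_2$'' matches containment of cones in the Bergman fan directly; the loopy case reduces after stripping the common loops and using $p$ to account for the remaining loops of $\sQ_1$. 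Part (3) follows from the fact that flats of $\sQ_1\oplus \sQ_2$ are exactly unions $F_1 \cup F_2$ where $F_i$ is a flat of $\sQ_i$, so chains of proper flats decompose as pairs of chains on the summands.

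The main obstacle is the rank bookkeeping needed to verify that proper chains map to proper chains. In part (1), a proper flat $F$ of $\sQ$ may intersect $\lambda$ in $\emptyset$ or in all of $\lambda$, in which case the projected generator either vanishes or collapses to zero in $\widetilde{N}_{\R}^{\sQ|\lambda}$; one must argue that this is consistent with the structure of $\Berg(\sQ|\lambda)$ and does not produce spurious cones. Analogous rank checks appear in (2) when lifting flats across the quotient. Once these degenerate cases are handled, the remaining work is a routine unwinding of the definitions of $\widetilde{N}^{\sQ}$ and the loop-removal procedure.
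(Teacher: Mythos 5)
The paper offers no proof of this proposition at all---it cites \cite{ArdilaKlivans,Sturmfels2002} for the loopless case and asserts that ``one readily extends them to the general case''---and your plan supplies exactly that intended routine extension: the three loop-compatibility identities you list are correct and justify that the projections descend to the lattices $\widetilde{N}^{\sQ}$, and the flag-of-flats arguments you sketch for the loopless statements (flats of $\sQ|\lambda$ as $F\cap\lambda$ with lifting via closure, quotients as containment of chains of proper flats, flats of a direct sum as unions) are the standard ones. Your proposal is correct and takes essentially the same approach the paper points to.
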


    Given a $(\vec{r},n)$--flag matroid $\vec{\sQ} = (\sQ_1,\dots,\sQ_s)$ and $\lambda\subset [n]$, the \textit{restriction} $\vec{\sQ} | \lambda$ and \textit{contraction} $\vec{\sQ}/\lambda$ are defined componentwise:
	\begin{equation*}
	\vec{\sQ}|\lambda = (\sQ_1|\lambda,\ldots,\sQ_{s}|\lambda), \hspace{20pt} \vec{\sQ}/\lambda = (\sQ_1/\lambda,\ldots,\sQ_{s}/\lambda).
	\end{equation*}
	
	\begin{proposition}
	\label{prop:flagMatroidResCon}
		Given a flag matroid $\vec{\sQ}$ and $\lambda\subset [n]$, the restriction $\vec{\sQ}|\lambda$ and contraction $\vec{\sQ}/\lambda$ are flag matroids. 
	\end{proposition}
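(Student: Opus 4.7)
The plan is to reduce the proposition to a single-quotient statement: it suffices to prove that whenever $\sR \onto \sP$ is a quotient of matroids on $[n]$ and $\lambda \subset [n]$, both $\sR|\lambda \onto \sP|\lambda$ and $\sR/\lambda \onto \sP/\lambda$ hold. Since restriction and contraction act componentwise on $\vec{\sQ}$, applying this to each consecutive pair $\sQ_j \onto \sQ_i$ (for $i\leq j$) yields the proposition, with the rank conditions for a flag matroid being automatic once the quotient relations hold (allowing possibly repeated ranks, as permitted by Remark \ref{rmk:repeatedRanks}).

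For restrictions, I would invoke Proposition \ref{prop:bergman}(2): the hypothesis $\sR \onto \sP$ gives $\lp(\sR) \subset \lp(\sP)$ and $\Berg(\sP) \subset p(\Berg(\sR))$ where $p\colon \widetilde{N}^{\sR}_{\R} \to \widetilde{N}^{\sP}_{\R}$ is the coordinate projection. Since the loops of $\sQ|\lambda$ are precisely $\lp(\sQ) \cap \lambda$, the loop inclusion transfers immediately: $\lp(\sR|\lambda) \subset \lp(\sP|\lambda)$. For the Bergman condition, Proposition \ref{prop:bergman}(1) identifies $\Berg(\sP|\lambda)$ and $\Berg(\sR|\lambda)$ as the images of $\Berg(\sP)$ and $\Berg(\sR)$ under coordinate projections to the non-loop coordinates in $\lambda$; a short diagram chase over the square of projections shows $\Berg(\sP|\lambda) \subset q(\Berg(\sR|\lambda))$ for the induced projection $q$. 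Applying the converse direction of Proposition \ref{prop:bergman}(2) then gives $\sR|\lambda \onto \sP|\lambda$.

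For contractions, I would appeal to matroid duality. The classical facts $\sR \onto \sP \iff \sP^* \onto \sR^*$ and $(\sQ/\lambda)^* = \sQ^*\big|([n]\setminus\lambda)$ allow us to rewrite the desired quotient $\sR/\lambda \onto \sP/\lambda$ as $\sP^*\big|([n]\setminus\lambda) \onto \sR^*\big|([n]\setminus\lambda)$, which we have just established from the restriction case applied to $\sP^* \onto \sR^*$.

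The main obstacle is the diagram chase in the restriction case, because the ambient lattices $\widetilde{N}^{\sR}$, $\widetilde{N}^{\sP}$, $\widetilde{N}^{\sR|\lambda}$, $\widetilde{N}^{\sP|\lambda}$ involve quotienting by different spans of loops, so one must check that the square of coordinate projections commutes up to these quotients. An alternative route, should the diagram chase prove cumbersome, is to bypass Bergman fans and use the characterization that $\sR \onto \sP$ iff every circuit of $\sR$ is a disjoint union of circuits of $\sP$; since circuits of $\sQ|\lambda$ are the circuits of $\sQ$ contained in $\lambda$, and circuits of $\sQ/\lambda$ are the minimal nonempty sets of the form $C \setminus \lambda$ for $C$ a circuit of $\sQ$, preservation of the circuit condition under both minor operations is straightforward.
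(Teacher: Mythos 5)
Your proposal is correct and follows essentially the same route as the paper: the restriction case is handled via Proposition \ref{prop:bergman} and a commutative square of coordinate projections between the lattices $\widetilde{N}^{\sQ_j}$, $\widetilde{N}^{\sQ_i}$, $\widetilde{N}^{\sQ_j|\lambda}$, $\widetilde{N}^{\sQ_i|\lambda}$, and the contraction case is deduced by duality from the restriction case using $(\vec{\sQ}/\lambda)^{*}=\vec{\sQ}^{*}|([n]\setminus\lambda)$. Your explicit check of the loop-set inclusion is a detail the paper leaves implicit, but the argument is the same.
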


	\begin{proof}
	This proposition is \cite[Theorem~4.1.5]{BrandtEurZhang}, equivalently, \cite[Theorem~6.1]{AguiarArdila} and Formula \eqref{eq:facesMS}.
    We argue using the Bergman fan of a matroid. Consider the commutative diagram of coordinate projections
    \begin{equation*}
    \begin{tikzcd}  
    \widetilde{N}_{\R}^{\sQ_j} \arrow[r,"p_1"] \arrow[d, ,"p_2"']   
    & \widetilde{N}_{\R}^{\sQ_i} \arrow[d, ,"p_3"] \\
    \widetilde{N}_{\R}^{\sQ_j|\lambda} \arrow[r, ,"p_4"]
    &  \widetilde{N}_{\R}^{\sQ_i|\lambda}
    \end{tikzcd}
    \end{equation*}
    where $i\leq j$. By Proposition \ref{prop:bergman}, we have that $\Berg(\sQ_i) \subset p_1(\Berg(\sQ_j))$ and 
    \begin{equation*}
        p_4(\Berg(\sQ_j|\lambda)) = (p_4\circ p_2)(\Berg(\sQ_j))  \supseteq  p_3(\Berg(\sQ_i)) = \Berg(\sQ_i|\lambda). 
    \end{equation*}
    So $\sQ_i|\lambda$ is a quotient of $\sQ_j|\lambda$. This proves that $\vec{\sQ}|\lambda$ is a flag matroid.  Next consider contraction. The dual of $\vec{\sQ}$, defined by $\vec{\sQ}^* := (\sQ_s^{*}, \ldots, \sQ_1^{*})$, is a flag matroid   \cite[Proposition~7.4.7]{White}. By the restriction case, we have that $\vec{\sQ}^*|([n]\setminus \lambda)$ is a flag matroid, and therefore its dual, which is exactly $\vec{\sQ}/\lambda$, is also a flag matroid. 
	\end{proof}

	Before considering direct sums of flag matroids, we review direct sums of point configurations. Let $M$ and $M'$ be the lattices dual to $\Z^{n}/\Zone$ and  $\Z^{n'}/\Zone$, respectively, and let $(\sQ,\sa)$ and $(\sQ',\sa')$ be point configurations in $M$ and $M'$, respectively.  The direct sum  $(\sQ,\sa)\oplus (\sQ',\sa')$ is the point configuration $(\sQ\oplus \sQ', \sa \oplus \sa')$ in $M\oplus M'$, where
	\begin{equation*}
	    \sa \oplus \sa': \sQ\oplus \sQ' \to M\oplus M' \hspace{20pt} (\lambda,\lambda') \mapsto (\sa(\lambda),\sa'(\lambda'))
	\end{equation*}

    Given $(\vec{r},n)$ and $(\vec{r}',n')$--flag matroids $\vec{\sQ} = (\sQ_1,\ldots,\sQ_s)$ and $\vec{\sQ}'=(\sQ_{1}',\ldots,\sQ_{s}')$, their \textit{direct sum} is
    \begin{equation*}
    \vec{\sQ} \oplus \vec{\sQ}' = (\sQ_1\oplus \sQ_1' , \ldots, \sQ_s\oplus \sQ_s').
    \end{equation*} 
	\begin{proposition}
	\label{prop:flagStrataDirectSum}
		The direct sum $\vec{\sQ} \oplus \vec{\sQ}'$ is a $(\vec{r}+\vec{r}',n+n')$ matroid. Its polytope and stratum are
		\begin{equation*}
		    \Delta(\vec{\sQ} \oplus \vec{\sQ}') \cong \Delta(\vec{\sQ}) \times \Delta(\vec{\sQ}')  \hspace{20pt} \text{ and } \hspace{20pt} \Fl(\vec{\sQ} \oplus \vec{\sQ}') \cong \Fl(\vec{\sQ}) \times \Fl(\vec{\sQ}')
		\end{equation*}
	\end{proposition}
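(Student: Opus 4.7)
The plan is to prove the three assertions in order: the flag matroid condition via Proposition \ref{prop:bergman}, the polytope identity via a product-Minkowski-sum compatibility, and the scheme isomorphism via a coordinate-ring comparison arising from the direct sum of subspaces.

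First, for the flag matroid claim, each $\sQ_k \oplus \sQ_k'$ is a $(r_k+r_k', n+n')$--matroid by the standard direct sum construction, with loop set $\lp(\sQ_k) \sqcup \lp(\sQ_k')$. To verify the quotient relation $\sQ_j \oplus \sQ_j' \onto \sQ_i \oplus \sQ_i'$ for $i \leq j$, I would apply Proposition \ref{prop:bergman}(3) to identify $\Berg(\sQ_k \oplus \sQ_k') \cong \Berg(\sQ_k) \times \Berg(\sQ_k')$, then translate the hypothesis quotients $\sQ_j \onto \sQ_i$ and $\sQ_j' \onto \sQ_i'$ via part (2) into the corresponding inclusions of (projected) Bergman fans; taking products yields the required inclusion along with the loop containment.

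For the polytope decomposition, the standard identity $\Delta(\sQ_k \oplus \sQ_k') \cong \Delta(\sQ_k) \times \Delta(\sQ_k')$, together with the distributivity of Minkowski summation over products across disjoint coordinate blocks, gives
\begin{equation*}
\Delta(\vec{\sQ} \oplus \vec{\sQ}') = \sum_{k=1}^{s} \Delta(\sQ_k \oplus \sQ_k') \cong \left( \sum_{k=1}^{s} \Delta(\sQ_k) \right) \times \left( \sum_{k=1}^{s} \Delta(\sQ_k') \right) = \Delta(\vec{\sQ}) \times \Delta(\vec{\sQ}').
\end{equation*}

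Finally, for the stratum isomorphism, I would define a morphism $\Psi: \Fl(\vec{r},n) \times \Fl(\vec{r}',n') \to \Fl(\vec{r}+\vec{r}',n+n')$ by $(F_*, F_*') \mapsto (F_k \oplus F_k')_{k}$, using the splitting $\C^{n+n'} = \C^n \oplus \C^{n'}$. On Pl\"ucker coordinates it acts by
\begin{equation*}
\zeta_{\mu}(F_k \oplus F_k') = \begin{cases} \pm\, \zeta_{\lambda}(F_k)\,\zeta_{\lambda'}(F_k') & \mu = \lambda \sqcup \lambda',\; |\lambda|=r_k,\; |\lambda'|=r_k', \\ 0 & \text{otherwise,} \end{cases}
\end{equation*}
so $\Psi$ restricts to a morphism $\Fl(\vec{\sQ}) \times \Fl(\vec{\sQ}') \to \Fl(\vec{\sQ} \oplus \vec{\sQ}')$, with set-theoretic inverse sending $G_* \in \Fl(\vec{\sQ} \oplus \vec{\sQ}')$ to $(G_k \cap \C^n,\, G_k \cap \C^{n'})_k$ (well-defined by the direct-sum structure of the matroids). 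The main obstacle is upgrading this to a scheme-theoretic isomorphism: I would examine the quadratic generators $f_{\mu,\nu}(\vec{\sQ} \oplus \vec{\sQ}')$ from Formula \eqref{eq:flagMatroidIdealGens} and observe that the exchange index $k$ must lie entirely in either $[n]$ or $\{n+1,\ldots,n+n'\}$, so each such relation factors as a Pl\"ucker relation for $\vec{\sQ}$ (or for $\vec{\sQ}'$) times a common monomial coming from the other factor. This identifies the pullback of coordinate rings with the tensor product $R_{\vec{\sQ}} \otimes R_{\vec{\sQ}'}$, and compatibility of localization and $\GG_m^s$--invariants with products then yields the claimed isomorphism.
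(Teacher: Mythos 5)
Your arguments for the first two claims are essentially the paper's: the flag-matroid property is deduced from Proposition \ref{prop:bergman}(2)--(3) exactly as in the paper (you are in fact slightly more careful, since the paper's proof nominally cites part (1) and suppresses the coordinate projections and the loop bookkeeping), and the polytope identity follows from compatibility of Minkowski sums with direct sums of point configurations, which is what the paper writes as $\MS((\vec{\sQ},\vec{\sa})\oplus(\vec{\sQ}',\vec{\sa}'))=\MS(\vec{\sQ},\vec{\sa})\oplus\MS(\vec{\sQ}',\vec{\sa}')$.

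For the stratum isomorphism you diverge: the paper explicitly proves this only on $\C$-points (the map $(F_*,F_*')\mapsto(F_k\oplus F_k')_k$ with inverse coming from $\Gr(\sQ\oplus\sQ')=\Gr(\sQ)\times\Gr(\sQ')$, noting the result is not needed for the main theorems), and your point-level bijection is exactly that argument. Your proposed scheme-theoretic upgrade, however, has a gap. Factoring the incomplete Pl\"ucker quadrics $f_{\mu,\nu}(\vec{\sQ}\oplus\vec{\sQ}')$ into block relations is correct as far as it goes (for each admissible split of $\mu$ and $\nu$ the exchange index is confined to one block), but it does not by itself identify the coordinate ring with $R_{\vec{\sQ}}\otimes R_{\vec{\sQ}'}$. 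The natural comparison map sends $p_{\lambda\sqcup\lambda'}\mapsto p_{\lambda}\otimes p_{\lambda'}$, i.e., it is a Segre-type map, so to get injectivity on the quotient you must also show that the rank-one relations $p_{\lambda\sqcup\lambda'}p_{\kappa\sqcup\kappa'}-p_{\lambda\sqcup\kappa'}p_{\kappa\sqcup\lambda'}$ lie in $I_{\vec{\sQ}\oplus\vec{\sQ}'}$ (in the localized ring), and then match the $\GG_m^{s}$-invariants of $R_{\vec{\sQ}\oplus\vec{\sQ}'}$ with the $\GG_m^{2s}$-invariants of the tensor product. Neither step is automatic from the factorization you describe, and this is presumably why the authors restricted themselves to $\C$-points. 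If you want the scheme-theoretic statement, you should either supply these verifications or argue via the affine charts of \S\ref{sec:affineCoordinates}.
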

	
	\begin{proof}
	We argue using the Bergman fan. By Proposition \ref{prop:bergman}(1), we have that $\Berg(\sQ_i) \subset \Berg(\sQ_j)$ and $\Berg(\sQ_i') \subset \Berg(\sQ_j')$ for $i\leq j$. By part (3) of that proposition, we also have that
	$\Berg(\sQ_i \oplus \sQ_i') = \Berg(\sQ_i) \times \Berg(\sQ_i')$. So $\Berg(\sQ_i \oplus \sQ_i') \subset \Berg(\sQ_j \oplus \sQ_j')$ for $i\leq j$, and therefore $\vec{\sQ}\oplus \vec{\sQ}'$ is a flag matroid.

	Next, consider the point configurations $(\vec{\sQ},\vec{\sa})$ and  $(\vec{\sQ},\vec{\sa})$ associated to the flag matroids. We have
	\begin{equation*}
	    \MS((\vec{\sQ},\vec{\sa}) \oplus (\vec{\sQ}',\vec{\sa}') ) = \MS(\vec{\sQ},\vec{\sa}) \oplus \MS(\vec{\sQ}',\vec{\sa}'),
	\end{equation*}
	from which the statement on flag matroid polytopes follows. 

    Finally, consider the claim on flag strata.  This result is not critical for our main results, so we prove this only at the level of $\C$-points as it is clearer than a scheme-theoretic argument. Define
    \begin{align*}
        \Fl(\vec{\sQ}) \oplus \Fl(\vec{\sQ}') &\to \Fl(\vec{\sQ} \oplus \vec{\sQ}') \\
        ((F_{1}\subset \cdots \subset F_{s}),(F_{1}'\subset \cdots \subset F_{s}')) &\mapsto (F_{1}\oplus F_{1}'\subset \cdots \subset F_{s}\oplus F_{s}').
    \end{align*}
    Because  $\Gr(\sQ\oplus \sQ') = \Gr(\sQ) \times \Gr(\sQ')$, any flag  in $\Fl(\vec{\sQ}\oplus \vec{\sQ})$ is of the form $(F_{1}\oplus F_{1}'\subset \cdots \subset F_{s}\oplus F_{s}')$. We can use this to produce an inverse to the above map.
	\end{proof}
	
	Given a flag matroid $\vec{\sQ}$, any face of $\Delta(\vec{\sQ})$ is of the form $\Delta(\vec{\sQ}_{\sv})$, where $\sv=-\epsilon_{\lambda}$ for some $\lambda\subset [n]$; $\vec{\sQ}_{\sv}$ is the flag matroid $\vec{\sQ}_{\sv}^{0}$ from Formula \eqref{eq:facesMS}.
	
	\begin{proposition}
	\label{prop:faceRestrictionContraction}
	    Let $\vec{\sQ}$ be a flag matroid and $\sv=-\epsilon_{\lambda}$. Then the flag matroid $\vec{\sQ}_{\sv}$ is
	    \begin{equation*}
	        \vec{\sQ}_{\sv} = \vec{\sQ}|\lambda \oplus \vec{\sQ}/\lambda. 
	    \end{equation*}
	\end{proposition}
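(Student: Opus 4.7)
The plan is to reduce the statement to the well-known componentwise identity for a single matroid, namely that $\sQ_{\sv} \cong (\sQ|\lambda) \oplus (\sQ/\lambda)$ when $\sv = -\epsilon_{\lambda}$, as recalled in the opening paragraph of Section \ref{sec:flagFaces}. Since every operation appearing on either side of the claimed identity is defined componentwise on flag matroids, the proposition should follow by reading off this identity in each slot.

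More precisely, I would first observe that by Formula \eqref{eq:facesMS} applied with $\vec{\sw} = 0$, the flag matroid $\vec{\sQ}_{\sv}$ has $i$-th component equal to $(\sQ_i)_{\sv}$. Second, by the componentwise definitions of restriction and contraction preceding Proposition \ref{prop:flagMatroidResCon}, the $i$-th components of $\vec{\sQ}|\lambda$ and $\vec{\sQ}/\lambda$ are $\sQ_i|\lambda$ and $\sQ_i/\lambda$, respectively, and that proposition ensures both sequences are themselves flag matroids. Third, by the definition of direct sum given just before Proposition \ref{prop:flagStrataDirectSum}, the $i$-th component of $\vec{\sQ}|\lambda \oplus \vec{\sQ}/\lambda$ is $(\sQ_i|\lambda) \oplus (\sQ_i/\lambda)$, and that proposition confirms the result is a flag matroid. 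Combining these three observations with the single-matroid fact gives
\begin{equation*}
    (\vec{\sQ}_{\sv})_i \;=\; (\sQ_i)_{\sv} \;\cong\; (\sQ_i|\lambda) \oplus (\sQ_i/\lambda) \;=\; (\vec{\sQ}|\lambda \oplus \vec{\sQ}/\lambda)_i
\end{equation*}
for each $i = 1, \ldots, s$, which is the desired equality of flag matroids.

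There is no substantive obstacle here; the proof is a bookkeeping exercise once the single-matroid face decomposition is in hand. The only point worth flagging is that one should verify the single-matroid identity at the level of bases (rather than merely polytopes) so that the componentwise equalities recorded above are genuine equalities of matroids, not merely of their polytopes; this is a short direct check from the definitions of $\sQ_{\sv}$ via Formula \eqref{eq:Qvw}, $\sQ|\lambda$, $\sQ/\lambda$, and $\oplus$.
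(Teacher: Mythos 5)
Your proposal is correct and matches the paper's own argument: the paper's proof is precisely the one-line componentwise computation $\vec{\sQ}_{\sv} = ((\sQ_1)_{\sv},\ldots,(\sQ_s)_{\sv}) = (\sQ_1|\lambda \oplus \sQ_1/\lambda,\ldots,\sQ_s|\lambda \oplus \sQ_s/\lambda) = \vec{\sQ}|\lambda \oplus \vec{\sQ}/\lambda$, relying on the single-matroid face decomposition recalled at the start of \S\ref{sec:flagFaces} (the paper also notes the statement is a special case of \cite[Theorem~6.1]{AguiarArdila}). Your extra caveat about verifying the single-matroid identity at the level of bases rather than polytopes is a reasonable point of care, though the paper treats it as known.
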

	\begin{proof}
	    This is a special case of  \cite[Theorem~6.1]{AguiarArdila}. Alternatively, observe that
	    \begin{equation*}
	        \vec{\sQ}_{\sv} = ((\sQ_1)_{\sv},\ldots,(\sQ_{s})_{\sv}) = (\sQ_1|\lambda \oplus \sQ_1/\lambda,\ldots,\sQ_1|\lambda \oplus \sQ_s/\lambda) = \vec{\sQ}|\lambda \oplus \vec{\sQ}/\lambda. \qedhere
	    \end{equation*}
	\end{proof}
	
	\begin{remark}
	The flag matroids $\vec{\sQ}|\lambda$ and $\vec{\sQ}/\lambda$ may have repeated constituents; see Remark \ref{rmk:repeatedRanks} on the formation of their strata. 
	\end{remark}

	\section{The complete flag variety}
	\label{sec:completeFlag}
	
	In the remaining sections, we specialize to the case of the complete flag variety
	\begin{equation*}
	\label{notn:completeFlag}
	\Fl(n) := \Fl((1,\ldots,n-1),n).    
	\end{equation*}
	
	\subsection{Affine coordinates}
	\label{sec:affineCoordinates}
	The \textit{standard flag} is the flag of subsets
	\begin{equation}
	\label{eq:standardFlag}
	\{1\} \subset \{1,2\} \subset \cdots \subset \{1,\ldots,n-1\}.
	\end{equation}
	Let $\vec{\sQ}$ be a $\C$-realizable flag matroid of rank $(1,\ldots,n-1)$ on $[n]$, and without loss of generality, assume that $\pB(\vec{\sQ})$ contains the standard flag. Define the polynomial ring 
	\begin{equation*}
	B^{x} = \C[x_{ij} \, : \, 1\leq i < j \leq n]
	\end{equation*}
	and $B^{x}$-valued matrix
	\begin{equation}
	\label{eq:matrixX}
	X = \begin{bmatrix}
	1 & x_{12} & \cdots & x_{1,n-1} & x_{1n} \\
	0 & 1 & \cdots & x_{2,n-1} & x_{2n} \\
	\cdots & \cdots & \cdots & \cdots & \cdots \\
	0 & 0 & \cdots & 1 & x_{n-1,n} \\
	0 & 0 & \cdots & 0 & 1 
	\end{bmatrix}
	\end{equation}
	Given $\lambda \subset [n]$, let $X_{\lambda}$ denote the determinant of the submatrix of $X$ whose rows are $1,\ldots,|\lambda|$ and columns are indexed by $\lambda$. Define
	\begin{equation*}
	B_{\vec{\sQ}}^{x} = \C[x_{ij} \, : \, [i-1]\cup j \in \pB(\vec{\sQ})]   
	\end{equation*}
	and the quotient map
	\begin{equation*}
	\pi_{\vec{\sQ}}: B^x \onto B^{x} / \langle x_{ij} \, : \, [i-1]\cup j \notin \pB(\vec{\sQ}) \rangle \xrightarrow{\sim}  B_{\vec{\sQ}}^x.
	\end{equation*}
	Now let
	\begin{equation*}
	I_{\vec{\sQ}}^{x} = \langle \pi_{\vec{\sQ}}(X_{\lambda}) \, : \, \lambda \notin \pB(\vec{\sQ}) \rangle \hspace{20pt}  S_{\vec{\sQ}}^{x} = \langle \pi_{\vec{\sQ}}(X_{\lambda}) \, : \, \lambda \in \pB(\vec{\sQ}) \rangle_{\semigp}.
	\end{equation*}
	and set 
	\begin{equation*}
	R_{\vec{\sQ}}^{x} = (S_{\vec{\sQ}}^{x})^{-1} B_{\vec{\sQ}}^{x} / I_{\vec{\sQ}}^{x}.
	\end{equation*}
	The ring homomorphism
	\begin{equation*}
	\psi_{\vec{\sQ}} : R_{\vec{\sQ}}^{x} \to (R_{\vec{\sQ}})^{\GG_m^s} \hspace{30pt}
	x_{ij} \mapsto p_{[i-1] \cup j} / p_{[i]}.
	\end{equation*}
	defines an isomorphism of $R_{\vec{\sQ}}^{x}$ to the coordinate ring of $\Fl(\vec{\sQ})$. 
	
	\begin{proposition}
		\label{prop:mapxcoordinates}
		The morphism $\varphi_{\vec{\sQ},\vec{\sP}'}: \Fl(\vec{\sQ}) \to \Fl(\vec{\sP}')$ corresponds to the ring homomorphism 
		\begin{equation*}
		\varphi_{\vec{\sQ},\vec{\sP}'}^{\#}: R_{\vec{\sP}'}^{x} \to R_{\vec{\sQ}}^{x} \hspace{20pt} x_{ij} \mapsto x_{ij}.
		\end{equation*}
	\end{proposition}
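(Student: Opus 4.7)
The plan is to transport the Pl\"ucker-coordinate description of the face morphism---induced by the inclusion $B_{\vec{\sP}'}\subset B_{\vec{\sQ}}$ as in Proposition \ref{prop:init2strata} with $\vec{\sw}=0$---over to the affine coordinates via the isomorphisms $\psi_{\vec{\sP}'}$ and $\psi_{\vec{\sQ}}$, and then check the resulting map on generators.

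Concretely, let $\alpha:R_{\vec{\sP}'}^x\to R_{\vec{\sQ}}^x$ denote the candidate ring homomorphism $x_{ij}\mapsto x_{ij}$. I would show that the square
\[
\begin{tikzcd}
R_{\vec{\sP}'}^x \arrow[r, "\alpha"] \arrow[d, "\psi_{\vec{\sP}'}"'] & R_{\vec{\sQ}}^x \arrow[d, "\psi_{\vec{\sQ}}"] \\
(R_{\vec{\sP}'})^{\GG_m^s} \arrow[r, "(\varphi_{\vec{\sQ},\vec{\sP}'})^{\#}"'] & (R_{\vec{\sQ}})^{\GG_m^s}
\end{tikzcd}
\]
commutes. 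Since the $x_{ij}$ generate $R_{\vec{\sP}'}^x$, it suffices to check commutativity on each generator. Along the down-right path, $\psi_{\vec{\sP}'}(x_{ij}) = p_{[i-1]\cup j}/p_{[i]}$, which $(\varphi_{\vec{\sQ},\vec{\sP}'})^{\#}$ sends to the identically-named ratio in $(R_{\vec{\sQ}})^{\GG_m^s}$ because $p_{[i-1]\cup j}$ and $p_{[i]}$ both lie in $B_{\vec{\sP}'}\subset B_{\vec{\sQ}}$ and the face morphism is the inclusion-induced map. Along the right-down path, $\psi_{\vec{\sQ}}(\alpha(x_{ij})) = \psi_{\vec{\sQ}}(x_{ij}) = p_{[i-1]\cup j}/p_{[i]}$, which matches.

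The only real bookkeeping is to ensure that the variable $x_{ij}$ of $B_{\vec{\sP}'}^x$ is also a variable of $B_{\vec{\sQ}}^x$, i.e., $[i-1]\cup j\in \pB(\vec{\sP}')$ forces $[i-1]\cup j\in \pB(\vec{\sQ})$. This is the containment $\pB(\vec{\sP}')\subset \pB(\vec{\sQ})$, which is immediate from $\vec{\sP}' = \vec{\sQ}_{\sv}$ for some $\sv\in N_\R$. Combined with the standing assumption of \S \ref{sec:affineCoordinates} that both $\pB(\vec{\sP}')$ and $\pB(\vec{\sQ})$ contain the standard flag (so that both $\psi$'s are defined), this makes all ingredients well-defined. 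Well-definedness of $\alpha$ itself on the quotient-localization $R_{\vec{\sP}'}^x$ is then automatic, since $\alpha$ coincides with the composition $\psi_{\vec{\sQ}}^{-1}\circ (\varphi_{\vec{\sQ},\vec{\sP}'})^{\#}\circ \psi_{\vec{\sP}'}$ of three well-defined ring maps. I do not foresee any substantive obstacle beyond this naturality check on generators.
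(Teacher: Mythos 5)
Your proposal is correct and is essentially the paper's own argument: the paper likewise assumes both $\pB(\vec{\sQ})$ and $\pB(\vec{\sP}')$ contain the standard flag and verifies that the composition $\psi_{\vec{\sQ}}^{-1}\circ\varphi_{\vec{\sQ},\vec{\sP}'}^{\#}\circ\psi_{\vec{\sP}'}$ sends $x_{ij}$ to $x_{ij}$, which is exactly your commuting square checked on generators. Your added remark that $\pB(\vec{\sP}')\subset\pB(\vec{\sQ})$ (so the relevant variables exist in both rings) is a harmless and correct piece of bookkeeping that the paper leaves implicit.
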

	
	\begin{proof}
		As usual, assume that $\pB(\vec{\sQ})$ and $\pB(\vec{\sP}')$ contain the standard flag. The composition $\psi_{\vec{\sQ}}^{-1} \circ \varphi_{\vec{\sQ},\vec{\sP}'}^{\#} \circ \psi_{\vec{\sP}'}$ sends $x_{ij}$ to $x_{ij}$, as required. 
	\end{proof}

	We illustrate these constructions with the following example, which we also use in the proof of Theorem \ref{thm:initFl4}.
	\begin{example}
	\label{ex:forClosedImmersion}
		Let $\vec{\sQ}$ be the 4th flag matroid in Table \ref{table:Flag123-4}, i.e., the one with bases
		\begin{equation*}
		\pB(\vec{\sQ}) = \{1,2,4,12,24,123,124,234\}.
		\end{equation*}
		To simplify notation, let $u=x_{12}$, $v=x_{13}$, $w=x_{14}$, $x=x_{23}$, $y=x_{24}$, $z=x_{34}$. Because $2,4,124 \in \pB(\vec{\sQ})$ and $3,13,14 \notin \pB(\vec{\sQ})$, we have that $B_{\vec{\sQ}}^{x} = \C[u,w,z]$. Furthermore,  $\pi_{\vec{\sQ}}(X_{\lambda}) = \tilde{X}_{\lambda}$ where
		\begin{equation*}
		\tilde{X} = \begin{bmatrix}
		1 & u & 0 & w \\
		0 & 1 & 0 & 0 \\
		0 & 0 & 1 & z \\
		0 & 0 & 0 & 1 
		\end{bmatrix}
		\end{equation*}
		so $I_{\vec{\sQ}}^{x} = \langle 0 \rangle$ and $S_{\vec{\sQ}}^{x} = \langle u,w,z \rangle_{\semigp}$. Therefore $R_{\vec{\sQ}}^{x} = \C[u^{\pm},w^{\pm}, z^{\pm}]$. 
		
		Now consider the two flag matroids $\vec{\sQ}_a, \vec{\sQ}_b \leq \vec{\sQ}$ defined by
		\begin{equation*}
		\pB(\vec{\sQ}_{a}) = \{1,2,4,12,24,123,234\}, \hspace{20pt} \pB(\vec{\sQ}_{b}) = \{1,4,12,24,123,124,234\}.
		\end{equation*}
		(These are \textit{internal} in the sense of \S \ref{sec:flags4}.)
		By a similar computation, we see that
		\begin{equation*}
		R_{\vec{\sQ}_{a}}^{x} = \C[u^{\pm},w^{\pm}], \hspace{20pt} R_{\vec{\sQ}_{b}}^x = \C[w^{\pm},z^{\pm}].
		\end{equation*}
		Using Proposition \ref{prop:mapxcoordinates}, $\Fl(\vec{\sQ}) \to \Fl(\vec{\sQ}_a)$ and $\Fl(\vec{\sQ}) \to \Fl(\vec{\sQ}_b)$ may be identified with coordinate projections of tori of the form $\GG_m^3 \to \GG_m^2$, which are smooth and surjective with connected fibers. Also, the morphism
		\begin{equation}
		(\varphi_{\vec{\sQ},\vec{\sQ}_a}, \varphi_{\vec{\sQ},\vec{\sQ}_b}): \Fl(\vec{\sQ}) \to \Fl(\vec{\sQ}_a) \times \Fl(\vec{\sQ}_b) 
		\end{equation}
		is a closed immersion since the induced map on rings 
		\begin{equation*}
		\C[u^{\pm},z^{\pm}] \otimes_{\C} \C[u^{\pm},w^{\pm}] \to \C[u^{\pm},w^{\pm},z^{\pm}]
		\end{equation*}
		is surjective. We will use this fact in the proof of Theorem \ref{thm:initFl4}. 
	\end{example}

	\begin{proposition}
		\label{prop:morphismsTrick}
		Consider two flag matroids $\vec{\sP} \leq \vec{\sQ}$ with $\dim \Fl(\vec{\sQ}) = d$. Let 
		\begin{equation*}
		\emptyset = \lambda_0 \subsetneq \lambda_1 \subsetneq \cdots \subsetneq \lambda_{n-1} 
		\end{equation*}
		be a complete flag such that $\{\lambda_1,\ldots,\lambda_{n-1}\} \subset \pB(\vec{\sP})$ (and hence a subset of $\pB(\vec{\sQ})$).
		If
		\begin{equation*}
		| \{ i  \, : \,  i=0,\ldots,n-1;\; a\notin \lambda_{i+1};\; \lambda_i \cup a \in \sQ_{i+1}  \} | = d
		\end{equation*}
		then $\Fl(\vec{\sP})$ and $\Fl(\vec{\sQ})$ are smooth and irreducible, and $\Fl(\vec{\sQ}) \to \Fl(\vec{\sP})$ is smooth and dominant with connected fibers. 
	\end{proposition}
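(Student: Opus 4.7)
The plan is to work in the affine coordinates of \S\ref{sec:affineCoordinates}, reducing immediately to the case $\lambda_i=[i]$ by permuting $[n]$. Under this identification, the variables of the polynomial ring $B_{\vec{\sQ}}^{x}$---the pairs $(i,j)$ with $i<j$ and $[i-1]\cup j\in\pB(\vec{\sQ})$---are in bijection (via $k=i$, $a=j$) with the pairs counted in the hypothesis, so $B_{\vec{\sQ}}^{x}$ has Krull dimension exactly $d$.

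The first key step is to show $I_{\vec{\sQ}}^{x}=0$. Any nonzero ideal in the domain $B_{\vec{\sQ}}^{x}$ strictly drops Krull dimension upon quotient, and dimension does not increase under localization; yet $R_{\vec{\sQ}}^{x}=(S_{\vec{\sQ}}^{x})^{-1}B_{\vec{\sQ}}^{x}/I_{\vec{\sQ}}^{x}$ has dimension $\dim\Fl(\vec{\sQ})=d$ by the isomorphism $\psi_{\vec{\sQ}}$. Since $S_{\vec{\sQ}}^{x}$ contains no zero element (as $\Fl(\vec{\sQ})\neq\emptyset$), this forces $I_{\vec{\sQ}}^{x}=0$ already in $B_{\vec{\sQ}}^{x}$; thus $R_{\vec{\sQ}}^{x}=(S_{\vec{\sQ}}^{x})^{-1}B_{\vec{\sQ}}^{x}$ is a localization of a polynomial ring. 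In particular, $\Fl(\vec{\sQ})$ is smooth and irreducible, and $B_{\vec{\sQ}}^{x}\hookrightarrow R_{\vec{\sQ}}^{x}$ is an injection of domains.

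Next, write $B_{\vec{\sQ}}^{x}=B_{\vec{\sP}}^{x}[y_1,\ldots,y_k]$, where $y_1,\ldots,y_k$ enumerate the $\vec{\sQ}$-live variables that are not $\vec{\sP}$-live. The well-definedness of $\varphi_{\vec{\sQ},\vec{\sP}}^{\#}$ from Proposition~\ref{prop:mapxcoordinates} forces the composition $B_{\vec{\sP}}^{x}\hookrightarrow B_{\vec{\sQ}}^{x}\hookrightarrow R_{\vec{\sQ}}^{x}$ to kill $I_{\vec{\sP}}^{x}$; since this composition is injective, $I_{\vec{\sP}}^{x}=0$. Hence $R_{\vec{\sP}}^{x}=(S_{\vec{\sP}}^{x})^{-1}B_{\vec{\sP}}^{x}$ is also a localization of a polynomial ring, and $\Fl(\vec{\sP})$ is smooth and irreducible.

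Finally, $\varphi_{\vec{\sQ},\vec{\sP}}^{\#}$ factors as $R_{\vec{\sP}}^{x}\to R_{\vec{\sP}}^{x}[y_1,\ldots,y_k]\to R_{\vec{\sQ}}^{x}$---a polynomial extension followed by a localization at the image of $S_{\vec{\sQ}}^{x}$---so $\varphi_{\vec{\sQ},\vec{\sP}}$ is smooth of relative dimension $k$, in particular flat. Each generator $\pi_{\vec{\sQ}}(X_\lambda)\in S_{\vec{\sQ}}^{x}$ is nonzero in $B_{\vec{\sQ}}^{x}$ and hence remains nonzero over the fraction field of $R_{\vec{\sP}}^{x}$, so the generic fiber is nonempty and $\varphi_{\vec{\sQ},\vec{\sP}}$ is dominant; flatness plus dominance into an irreducible target upgrades this to surjectivity, and each fiber is a nonempty open subscheme of $\A^{k}_{\kappa(p)}$, hence irreducible and connected. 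The main obstacle I anticipate is the vanishing of $I_{\vec{\sP}}^{x}$: the generators $\pi_{\vec{\sP}}(X_\lambda)$ for $\lambda\in\pB(\vec{\sQ})\setminus\pB(\vec{\sP})$ are not obviously zero from the bases alone, and their vanishing must be extracted indirectly from the existence of $\varphi_{\vec{\sQ},\vec{\sP}}^{\#}$ together with the injection $B_{\vec{\sP}}^{x}\hookrightarrow R_{\vec{\sQ}}^{x}$ established in the second step.
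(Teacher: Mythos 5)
Your proof is correct and takes essentially the same route as the paper's: reduce to $\lambda_i=[i]$ by a permutation, count the variables of $B_{\vec{\sQ}}^{x}$ to get $\dim B_{\vec{\sQ}}^{x}=d$ and hence $I_{\vec{\sQ}}^{x}=0$ by comparing Krull dimensions, deduce $I_{\vec{\sP}}^{x}=0$ from the well-definedness of the map in Proposition~\ref{prop:mapxcoordinates}, and conclude because the morphism is a coordinate projection followed by a localization (your write-up merely makes explicit the steps the paper leaves implicit). The one misstep is the claim that flatness plus dominance forces surjectivity --- this is false (a dense open immersion is flat and dominant but not surjective) --- but it is harmless here, since the proposition asserts only dominance, and every nonempty fiber is a dense open subscheme of $\A^{k}_{\kappa(p)}$ and hence connected.
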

	
	\begin{proof}
		After applying a suitable permutation, we may assume that $\lambda_r = [r]$. By the hypothesis, we have that $\dim B_{\vec{\sQ}}^{x} = d$, and therefore $I_{\vec{\sQ}}^{x} = \langle 0 \rangle$. By Proposition \ref{prop:mapxcoordinates}, we also have that $I_{\vec{\sP}}^{x} = \langle 0 \rangle$. This means that $\Fl(\vec{\sQ})$ and $\Fl(\vec{\sP})$ are isomorphic to open subvarieties of tori and the morphism $\Fl(\vec{\sQ}) \to \Fl(\vec{\sP})$ is induced by a coordinate projection. 
	\end{proof}

	\subsection{Symmetry of $\Flo{n}$}
	Denote by \label{notn:Sn} $\Sn{n}$ the symmetric group on $[n]$. 
	The group $\Sn{n}$ acts on  $\Flo{n}$ by permuting the coordinates of $\C^{n}$, and $\Sn{2}$ acts on $\Flo{n}$ by duality, i.e., the unique generator acts on a complete flag by
	\begin{equation*}
	F_1\subset \cdots \subset F_{n-1} \mapsto F_{n-1}^{\perp} \subset \cdots \subset F_{1}^{\perp} 
	\end{equation*}
	These two actions commute, so we have an action of  of $\Sn{2} \times \Sn{n}$ on $\Flo{n}$ and hence on $\TFlo{n}^{\circ}$.  This is the symmetry group of the permutahedron $\Pi_n$ for $n\geq 4$, see \cite[Proposition~2]{JoswigLohoLuberOlarte}.

	\subsection{Strata for $n=3$} 
	\label{sec:strata3}
	There are 5 rank-$(1,2)$ flag matroids on $[3]$ up to $(\Sn{2} \times \Sn{3})$--symmetry:
	\begin{gather*}
	\pB(\vec{\sQ}_1) = \{1,12\}, \hspace{20pt} \pB(\vec{\sQ}_2) = \{1,2,12\}, \hspace{20pt} \pB(\vec{\sQ}_3) = \{1,3,12,23\},  \\ 
	\pB(\vec{\sQ}_4) = \{1,2,3,12,23\}, \hspace{20pt} \pB(\vec{\sQ}_5) = \{1,2,3,12,13,23\}.  
	\end{gather*}
	Their strata are smooth and irreducible of dimensions $0,1,1,2,3$, respectively. 
	
	\subsection{Strata for $n=4$} \label{sec:flags4}
	Recall from Remark \ref{rmk:matroidPolytope} that, for $\vec{r} = (1,\ldots,n-1)$, the flag matroid polytope of the uniform $(\vec{r},n)$--matroid is the $(n-1)$--dimensional permutahedron. Given  $(\vec{r},n)$--flag matroids $\vec{\sP} \leq \vec{\sQ}$, we say that $\vec{\sP}$ is \textit{internal} if  $\Delta(\vec{\sP})$ meets the relative interior of $\Pi_n$. 
	
	There are, up to $(\Sn{2} \times \Sn{4})$--symmetry, 29 rank $(1,2,3)$--rank flag matroids on $[4]$. There are 15 such flag matroids whose polytope has the largest possible dimension, i.e., equal to $\dim(\Pi_4)  = 3$. These flag matroids $\vec{\sQ}$ are listed in Table \ref{table:Flag123-4}, together with their coordinate rings and internal $\vec{\sP} \leq \vec{\sQ}$. Each pair $\vec{\sP} \leq \vec{\sQ}$ is labeled by a number (left-most column) and a letter (directly adjacent to the set of nonbases of $\vec{\sP}$ in the third column).

	\begin{proposition}
		\label{prop:flagStrata4}
		The strata $\Fl(\vec{\sQ})$ for rank $(1,2,3)$ flag matroids on $[4]$ are smooth and irreducible. 
	\end{proposition}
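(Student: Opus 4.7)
The proof splits by the dimension of $\Delta(\vec{\sQ})$ in $\Pi_4$.

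\emph{Case 1: $\dim\Delta(\vec{\sQ})=3$.} There are 15 such flag matroids up to $\Sn{2}\times\Sn{4}$-symmetry, listed in Table \ref{table:Flag123-4}. For each, the plan is to invoke Proposition \ref{prop:morphismsTrick} with $\vec{\sP}=\vec{\sQ}$. Since each $\sQ_i$ is a matroid quotient of $\sQ_{i+1}$, every basis of $\sQ_1$ extends step-by-step through bases of $\sQ_2$ and $\sQ_3$; thus $\pB(\vec{\sQ})$ contains a complete flag, which after a permutation from $\Sn{4}$ we may take to be the standard flag \eqref{eq:standardFlag} and use as the chain $\lambda_0\subsetneq\cdots\subsetneq\lambda_{n-1}$. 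The counting hypothesis of Proposition \ref{prop:morphismsTrick} is then equivalent to $\dim B^x_{\vec{\sQ}}=d=\dim\Fl(\vec{\sQ})$, since the pairs $(i,a)$ counted correspond exactly to the surviving variables $x_{i+1,a}$ of $B^x_{\vec{\sQ}}$. When this holds, we get $I^x_{\vec{\sQ}}=\langle 0\rangle$, so $\Fl(\vec{\sQ})$ is an open subvariety of a $d$-dimensional torus, hence smooth and irreducible. Example \ref{ex:forClosedImmersion} exhibits the computation for flag matroid 4; the remaining checks are finite combinatorial verifications performed in OSCAR.

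\emph{Case 2: $\dim\Delta(\vec{\sQ})<3$.} The plan is to decompose $\vec{\sQ}$ as a direct sum and reduce to strata on smaller ground sets. Since $\Delta(\vec{\sQ})$ is a generalized permutahedron failing to span the hyperplane $\{\sum x_i=6\}$, it lies in some additional hyperplane of $\R^4$; because $\Delta(\vec{\sQ})$ has edges parallel to the root directions $\epsilon_i-\epsilon_j$, this hyperplane must have the form $\{\sum_{i\in A_1}x_i=c\}$ for some proper nonempty $A_1\subsetneq[4]$. Decomposing the Minkowski sum $\Delta(\vec{\sQ})=\sum_i\Delta(\sQ_i)$ forces each $\Delta(\sQ_i)$ to lie in a corresponding hyperplane, whence each $\sQ_i$ splits as $\sQ_i|A_1\oplus\sQ_i|A_2$ (where $A_2=[4]\setminus A_1$). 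Thus $\vec{\sQ}=\vec{\sQ}|A_1\oplus\vec{\sQ}|A_2$, and Proposition \ref{prop:flagStrataDirectSum} yields
\[
\Fl(\vec{\sQ})\cong\Fl(\vec{\sQ}|A_1)\times\Fl(\vec{\sQ}|A_2),
\]
a product of strata on ground sets of size at most $3$. Smoothness and irreducibility of the factors follow from the classification in \S\ref{sec:strata3} (and trivially for $n\leq 2$), and hence so do those of $\Fl(\vec{\sQ})$.

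The principal obstacle is Case 1: although Proposition \ref{prop:morphismsTrick} supplies a uniform criterion, applying it requires reading off the bases of each of the 15 flag matroids and confirming the counting condition. This bookkeeping is routine but tedious, and is carried out by the OSCAR code underlying Table \ref{table:Flag123-4}.
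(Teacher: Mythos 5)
Your overall architecture matches the paper's: for $\dim\Delta(\vec{\sQ})<3$ decompose as a direct sum and reduce to ground sets of size at most $3$, and for $\dim\Delta(\vec{\sQ})=3$ show that $I^{x}_{\vec{\sQ}}=\langle 0\rangle$ so that $\Fl(\vec{\sQ})$ is an open subvariety of a torus. Case 2 is fine (your hyperplane argument is just a slightly more explicit version of the paper's appeal to Proposition \ref{prop:faceRestrictionContraction} and Proposition \ref{prop:flagStrataDirectSum}).

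However, Case 1 has a genuine gap: the uniform criterion you propose fails for exactly one of the $15$ orbit representatives, namely the flag matroid with nonbases $\{13,24\}$ (number 12 in Table \ref{table:Flag123-4}). There $\dim\Fl(\vec{\sQ})=4$ but $B^{x}_{\vec{\sQ}}$ has five variables and $I^{x}_{\vec{\sQ}}=\langle x_{12}x_{24}-x_{14}\rangle\neq\langle 0\rangle$. Moreover no choice of complete flag in $\pB(\vec{\sQ})$ rescues the counting hypothesis of Proposition \ref{prop:morphismsTrick}: after any relabeling, the sets of the form $[i]\cup a$ with $a\notin[i+1]$ include only the two $2$-sets $\{1,3\}$ and $\{1,4\}$, which are not complementary, whereas the two nonbases $13$ and $24$ are complementary; hence at most one nonbasis can be eliminated and $\dim B^{x}_{\vec{\sQ}}\geq 5>d$ always. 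So your assertion that ``the remaining checks are finite combinatorial verifications performed in OSCAR'' is false for this case, and you are missing the separate argument the paper supplies: the explicit isomorphism $\vartheta\colon R^{x}_{\vec{\sQ}}\to\C[u^{\pm},v^{\pm},y^{\pm},z^{\pm}]$ of Formula \eqref{eq:mat12Iso}, which uses the relation $w=uy$ (with $u$ invertible) to eliminate $w$ and exhibit $\Fl(\vec{\sQ})$ as an open subvariety of a torus after all. You need to add such an argument (or an equivalent one) to close the proof.
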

	
	\begin{proof}
		If $\dim \Delta(\vec{\sQ}) < 3 = \dim(\Pi_4)$, then $\vec{\sQ} \cong \vec{\sQ}|_{\lambda} \times \vec{\sQ}/\lambda$  for some $\lambda \subseteq [n]$ by Proposition \ref{prop:faceRestrictionContraction}, so $\Fl(\vec{\sQ})$ is smooth and irreducible by Proposition \ref{prop:flagStrataDirectSum}, \S \ref{sec:strata3}, and smoothness and irreducibility of thin Schubert cells of matroids on fewer than $4$ elements. 
		
		Now consider the $\dim \Delta(\vec{\sQ}) = 3$ case.  We proceed as in \S \ref{sec:affineCoordinates}, but to simplify notation, we set $u=x_{12}$, $v=x_{13}$, $w=x_{14}$, $x=x_{23}$, $y=x_{24}$, $z=x_{34}$. For all but the 12th flag matroid in Table \ref{table:Flag123-4}, it is immediately clear that $\Fl(\vec{\sQ})$ is isomorphic to a dense open subvariety of an algebraic torus. If $\vec{\sQ}$ is the 12th flag matroid, then
		\begin{equation}
		\label{eq:mat12Iso}
		\vartheta: R_{\vec{\sQ}}^{x} \to \C[u^{\pm},v^{\pm},y^{\pm},z^{\pm}] \hspace{20pt} (u,v,w,y,z) \mapsto (u,v,uy,y,z)
		\end{equation}
		defines a ring isomorphism, the inverse is given by $(u,v,y,z)\mapsto (u,v,y,z)$. 
	\end{proof}

	\begin{proposition}
		\label{prop:flagStrataMaps4}
		If $\vec{\sQ}$ is a rank-$(1,2,3)$ flag matroid on $[4]$ such that $\dim \Delta(\vec{\sQ}) = 3$, and $\vec{\sP}\leq \vec{\sQ}$ is internal, then the morphism  $\Fl(\vec{\sQ})\to \Fl(\vec{\sP})$ is smooth and dominant with connected fibers.
	\end{proposition}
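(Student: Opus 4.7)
The plan is to deduce the result from Proposition \ref{prop:morphismsTrick} for all but one case, reserving a direct computation for the single flag matroid where the hypothesis of that proposition fails. Following the pattern already established in Proposition \ref{prop:flagStrata4}, I would walk through the list of rank-$(1,2,3)$ flag matroids $\vec{\sQ}$ on $[4]$ with $\dim \Delta(\vec{\sQ})=3$ given in Table \ref{table:Flag123-4}, and for each internal $\vec{\sP}\leq \vec{\sQ}$ (listed as rows labelled ``$k$a'', ``$k$b'', \dots\ in the table) verify smoothness, dominance, and connectedness of fibers of $\Fl(\vec{\sQ})\to \Fl(\vec{\sP})$.

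For the $14$ flag matroids $\vec{\sQ}$ other than the 12th one, Table \ref{table:Flag123-4} displays $R_{\vec{\sQ}}^x$ as a Laurent polynomial ring in a subset of the standard affine coordinates $x_{ij}$, and the dimension $d=\dim \Fl(\vec{\sQ})$ equals the number of such coordinates present. For each internal $\vec{\sP}\leq \vec{\sQ}$ I would exhibit a saturated chain $\emptyset=\lambda_0\subsetneq\lambda_1\subsetneq\lambda_2\subsetneq\lambda_3$ in $\pB(\vec{\sP})$ witnessing the counting condition of Proposition \ref{prop:morphismsTrick}. Since $\pB(\vec{\sP})\subset \pB(\vec{\sQ})$, Proposition \ref{prop:mapxcoordinates} identifies $\Fl(\vec{\sQ})\to\Fl(\vec{\sP})$ with the coordinate projection of tori $\Spec R_{\vec{\sQ}}^x \to \Spec R_{\vec{\sP}}^x$ induced by the inclusion of one Laurent polynomial subring into another, and such projections are automatically smooth and dominant with connected fibers. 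Producing a valid chain $\lambda_\bullet$ for each pair is a finite combinatorial verification and is naturally handled by the OSCAR/polymake code already referenced in the paper.

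The remaining case is the 12th flag matroid, where the relation $w=uy$ forces $R_{\vec{\sQ}}^x$ to fail to be a Laurent polynomial ring in the tautological coordinates, and so Proposition \ref{prop:morphismsTrick} does not apply as stated. Here I would argue directly using the isomorphism $\vartheta$ from Equation \eqref{eq:mat12Iso}. For each internal $\vec{\sP}\leq \vec{\sQ}$ in this row of Table \ref{table:Flag123-4} one computes $R_{\vec{\sP}}^x$, composes $R_{\vec{\sP}}^x \hookrightarrow R_{\vec{\sQ}}^x$ with $\vartheta$, and verifies that the result is the inclusion of a Laurent polynomial subring of $\C[u^{\pm},v^{\pm},y^{\pm},z^{\pm}]$; the coordinate $w$ on $\vec{\sP}$, if present, is sent to the monomial $uy$ and so remains invertible inside the target torus. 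The main obstacle is exactly this case: one must check by hand that no internal face $\vec{\sP}$ of the 12th polytope produces a non-torus target after the change of variables. Since this is a finite bookkeeping check across only a handful of internal faces, separating the argument cleanly into the generic case (Proposition \ref{prop:morphismsTrick}) and the exceptional 12th case (direct computation with $\vartheta$) should complete the proof.
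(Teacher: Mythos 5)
Your proposal is correct and follows essentially the same route as the paper: apply Proposition \ref{prop:morphismsTrick} (with suitable complete flags in $\pB(\vec{\sP})$) to every pair except those involving the 12th flag matroid, then handle 12(a) and 12(b) by composing with the isomorphism $\vartheta$ of Formula \eqref{eq:mat12Iso} and checking the resulting map is a coordinate inclusion of Laurent rings. The only cosmetic difference is that the paper lists the witnessing flags explicitly and uses the symmetry $(12)(34)$ to reduce 12(b) to 12(a), whereas you propose to verify both by hand.
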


	\begin{proof}
		We may apply Proposition \ref{prop:morphismsTrick} to the following pairs $\vec{\sP} \leq \vec{\sQ}$, thus deducing that  $\Fl(\vec{\sQ}) \to \Fl(\vec{\sP})$ is smooth and dominant with connected fibers in these cases:
		\begin{itemize}
			\item[-] 1(a), 1(d), 2(a), 2(c), 3(a), 4(a), 4(b), 5(a), 6(a), 6(b), 6(c), 7(a), 7(c), 8(a), 9(b), 10(a), 10(b), 11(a), 11(b), 13(a), 14(a) (use the flag $\emptyset \subset 1\subset 12 \subset 123$);
			\item[-] 1(b), 1(c), 3(b) (use the flag  $\emptyset \subset 1 \subset 14 \subset 124$); 
			\item[-] 2(b)  (use the flag $\emptyset \subset 2 \subset 24\subset 234$); 
			\item[-]  7(b)  (use the flag $\emptyset \subset 3 \subset 34\subset 234$). 
		\end{itemize}
		The only pairs $\vec{\sP} \leq \vec{\sQ}$ remaining are 12(a) and 12(b); the permutation $(12)(34)$ (in cyclic notation) interchanges these two pairs, so it suffices to consider the pair $\vec{\sP} \leq \vec{\sQ}$ in 12(a). The composition of $(\varphi_{\vec{\sQ},\vec{\sP}})^{\#}$ with the isomorphism $\vartheta$ from Formula \ref{eq:mat12Iso} is the ring homomorphism
		\begin{equation*}
		R_{\vec{\sP}}^{x} = \C[v^{\pm}, y^{\pm}] \to \C[u^{\pm},v^{\pm},y^{\pm},z^{\pm}] \hspace{20pt} (v,y) \mapsto (v,y)
		\end{equation*}
		which defines a morphism of affine schemes that is smooth and surjective with connected fibers. 
	\end{proof}

	\section{Inverse limits of complete matroid strata}
	\label{sec:inverseLimitsCompleteFlag}
	
	Given a flag matroid $\vec{\sQ}$ with a weighted point configuration $\vecomega$, the subdivision $\sQ(\vecomega)$ has numerous cells, making the computation of the inverse limit $\pQ(\vecomega)$ \textit{a priori}  unwieldy. In this section, we describe a much smaller subposet of $\pQ(\vecomega)$ combinatorially equivalent to the  \textit{adjacency graph} of $\pQ(\vecomega)$, and the limit of the corresponding diagram of flag matroid strata coincides with $\Fl(\vecomega)$. We prove Theorem \ref{thm:Flag4intro} as an application of this characterization of $\Fl(\vecomega)$ and Theorem \ref{thm:closedimmersionintro}. 
	
	\subsection{Inverse limits over graphs}
	Let $\pC$ be a category that has finite limits, e.g., the category of affine $\C$-schemes. We recall the notion of an inverse limit in $\pC$ parameterized by a finite connected graph $G$ (possibly with loops or multiple edges) in the sense of  \cite[Appendix~A]{CoreyGrassmannians}.
	Define a quiver $\Quiv(G)$ as follows. The vertex set of $\Quiv(G)$ is the set $V(G) \cup E(G)$; write $q_v$, resp. $q_e$, the vertex of $Q(G)$ corresponding to the vertex $v$, resp. the edge $e$, of $G$. The quiver $\Quiv(G)$ has an arrow $q_v \to q_e$ for each half-edge half-edge of $e$ adjacent to $v$. Viewing $\Quiv(G)$ as a category in the usual way, a \textit{diagram} of type $\Quiv(G)$ in $\pC$ is a functor $X:\Quiv(G) \to \pC$. The inverse limit of $X$ parameterized by the quiver $\Quiv(G)$ is denoted by
	\begin{equation*}
	\varprojlim_{G} X.
	\end{equation*}

    Let $\omega = (\sQ,\sa,\sw)$ be a weighted point configuration. The \textit{adjacency graph} of $\pQ(\omega)$, denoted by $\Gamma(\omega)$, is the graph with a vertex $v_{\sP}$ for each maximal element $\sP$ of $\pQ(\omega)$, and two vertices $v_{\sP_1}, v_{\sP_2}$ share an edge whenever $\sP_1$ and $\sP_2$ share a common facet. Given  $\sR \in \pQ(\omega)$, denote by $\Gamma_{\sR}(\omega)$ the full subgraph of $\Gamma(\omega)$ on the vertices $v_{\sP}$ such that $\sR \leq \sP$. 
    
    Given a flag matroid $\vec{\sQ}$ and $\vecomega = (\vec{\sQ},\vec{\sa},\vec{\sw})$ with  $\vec{\sw} \in \Dr(\vec{\sQ})$, denote by $\Gamma(\vec{\omega})$ the adjacency graph of $\pQ(\vecomega)$, and given a maximal element $\vec{\sP}$ of $\pQ(\vecomega)$, denote by $v_{\vec{\sP}}$ the corresponding vertex. We now show that $\Gamma(\vecomega)$ provides sufficient data to compute $\Fl(\vecomega)$. Our arguments follow those of the analogous statements for limits of thin Schubert cells as in \cite[Appendix C]{CoreyGrassmannians} by Cueto, thus we only sketch the proofs of the following two propositions. 
    
    \begin{proposition}
    \label{prop:subdBasisConnected}
    Given a weighted point configuration $\omega=(\sQ,\sa,\sw)$ and any $\sR \in \pQ(\omega)$, the graph $\Gamma_{\sR}(\omega)$ is connected.  
    \end{proposition}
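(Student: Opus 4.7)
The plan is to reinterpret $\Gamma_\sR(\omega)$ as the edge graph of a single convex polyhedron in $N_\R$ and then to invoke the classical fact that such an edge graph is connected. For each cell $\sP \in \pQ(\omega)$ I would introduce the relatively open polyhedron
\begin{equation*}
    C_\sP = \{ \sv \in N_\R \, : \, \sQ_\sv^\sw = \sP \},
\end{equation*}
whose closure equals $\overline{C_\sP} = \{\sv : \sQ_\sv^\sw \supseteq \sP\}$ and is cut out by the linear equalities $\bk{\sa(\lambda) - \sa(\mu)}{\sv} = \sw_\mu - \sw_\lambda$ for $\lambda,\mu \in \sP$ together with the inequalities $\bk{\sa(\lambda) - \sa(\xi)}{\sv} \leq \sw_\xi - \sw_\lambda$ for $\lambda \in \sP$, $\xi \notin \sP$. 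A direct computation then shows $\dim C_\sP = \dim \Delta(\sQ) - \dim \Delta(\sP)$ modulo the lineality subspace $L_\R \subseteq N_\R$ perpendicular to the affine span of $\sa(\sQ)$, and $\sR \leq \sP$ in $\pQ(\omega)$ if and only if $\overline{C_\sP}$ is a face of $\overline{C_\sR}$. In particular, maximal cells $\sP \geq \sR$ are in bijection with vertices of the polyhedron $\overline{C_\sR}/L_\R$.

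Next I would match the edges of $\Gamma_\sR(\omega)$ with bounded $1$-faces of $\overline{C_\sR}/L_\R$. If $\sP_1, \sP_2 \geq \sR$ are distinct maximal cells of $\pQ(\omega)$ sharing a common facet $\sR'$, then $\sR' = \sP_1 \cap \sP_2$ as polytopes in $M_\R$, so $\sR \subseteq \sP_1 \cap \sP_2 = \sR'$ forces $\sR \leq \sR'$. The dimension count makes $\overline{C_{\sR'}}$ the unique bounded $1$-face of $\overline{C_\sR}/L_\R$ joining the vertices $C_{\sP_1}$ and $C_{\sP_2}$, and conversely every bounded $1$-face arises this way from a common facet of two maximal cells above $\sR$.

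The conclusion then follows from the classical fact that the vertex-edge graph of any nonempty pointed convex polyhedron is connected: given two vertices, pick a linear functional in sufficiently general position that is bounded below on the polyhedron and walk from each vertex to the unique minimizer along monotonically decreasing bounded edges (a simplex-method argument; for bounded polytopes this is subsumed by Balinski's theorem). Applied to $\overline{C_\sR}/L_\R$, this produces the desired path in $\Gamma_\sR(\omega)$ between any two of its vertices.

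The main obstacle, though not a deep one, is the bookkeeping of the duality in the first two steps, specifically the verification that facet-sharings of maximal cells of $\pQ(\omega)$ above $\sR$ translate without loss into \emph{bounded} edges of the convex polyhedron $\overline{C_\sR}/L_\R$, and the careful handling of the lineality subspace $L_\R$ when $\Delta(\sQ)$ fails to affinely span $M$. Once this dictionary is established, the connectedness statement reduces to a standard fact of convex geometry with no further combinatorial input.
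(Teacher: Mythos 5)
Your proof is correct, but it takes a genuinely different (dual) route from the paper's. The paper argues primally in $M_{\R}$: pick $y$ in the relative interior of $\Delta(\sR)$, a small neighborhood $K$ of $y$ in which every cell met contains $\Delta(\sR)$ as a face, and a segment from a point of $\Delta(\sP_1)$ to a point of $\Delta(\sP_2)$ inside $K$ avoiding the codimension-$2$ skeleton; recording the cells traversed gives the path in $\Gamma_{\sR}(\omega)$. You instead pass to the dual complex in $N_{\R}$, identify $\Gamma_{\sR}(\omega)$ with the graph of vertices and bounded edges of the pointed polyhedron $\overline{C_{\sR}}/L_{\R}$, and run a simplex walk. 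Your dictionary does check out: $\overline{C_{\sP}}$ is the face of $\overline{C_{\sR}}$ obtained by tightening the inequalities $\bk{\sa(\mu)}{\sv}+\sw_{\mu}\leq \bk{\sa(\lambda)}{\sv}+\sw_{\lambda}$ for $\mu\in\sR$, $\lambda\in\sP\setminus\sR$, and the minimal face of $\overline{C_{\sR}}$ containing a point $\sv$ is exactly $\overline{C_{\sQ_{\sv}^{\sw}}}$, which yields the vertex and bounded-edge correspondences you assert; the lineality space of $\overline{C_{\sR}}$ is exactly $L_{\R}$ because $\sR\neq\emptyset$, so the quotient is pointed; and your observation that an improving edge at a non-optimal vertex cannot be an unbounded ray when the objective is bounded below correctly extends the Balinski/simplex argument to the unbounded setting. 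What each approach buys: the paper's is shorter and needs only the local structure of $\pQ(\omega)$ near $\Delta(\sR)$ plus one general-position choice, while yours requires setting up the dual polyhedron but in exchange exhibits $\Gamma_{\sR}(\omega)$ as an explicit vertex--edge graph of a polyhedron, from which stronger statements (e.g.\ higher connectivity in the bounded case) would follow for free. One cosmetic caution: the symbol $L$ is already used in the paper for the lattice of Formula \eqref{eq:definingL}, so your lineality space in $N_{\R}$ should be given a different name to avoid a clash.
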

    
    \begin{proof}
        Without loss of generality, assume that $\dim \Delta(\sQ) = \dim M_{\R}$ (otherwise, replace $M_{\R}$ by the affine span of $\Delta(\sQ)$). Suppose $\sP_1$ and $\sP_2$  are two maximal elements of $\pQ(\omega)$ such that $\sR \leq \sP_1, \sP_2$. Let $y$ be any point in the relative interior of $\Delta(\sR)$, and $K$ be a neighborhood of $y$ chosen small enough so that every $x\in K$ lies in a cell $\Delta(\widetilde{\sR})$ with $\widetilde{\sR} \in \pQ(\omega)$ containing $\Delta(\sR)$ as a face.  Choose a line segment $[x_1,x_2]$ lying in $K$ such that $x_1\in P_1$, $x_2\in P_2$, and $[x_1,x_2]$ does not meet any codimension-2 face of $\pQ(\omega)$. Then $[x_1,x_2]$ meets only codimension 0 and 1 faces of $\pQ(\omega)$, each of which has $\sR$ as a face. By recording the cells met by traversing $[x_1,x_2]$, we get a path from $v_{\sP_1}$ to $v_{\sP_2}$ in $\Gamma_{\sR}(\omega)$, as required.
    \end{proof}

	\noindent 
	
	\begin{proposition}
		\label{prop:fullComplex2Graph}
		There is an isomorphism of inverse limits 
		\begin{equation*}
		\Fl(\vecomega) \cong \varprojlim_{\Gamma(\vecomega)} \Fl. 
		\end{equation*}
	\end{proposition}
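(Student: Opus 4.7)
The plan is to combine a natural morphism coming from a functor between indexing categories with an explicit inverse constructed using the connectedness result in Proposition \ref{prop:subdBasisConnected}.

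First, I would produce the canonical morphism $\Fl(\vecomega) \to \varprojlim_{\Gamma(\vecomega)} \Fl$. Build a functor $\Phi : \Quiv(\Gamma(\vecomega)) \to \pQ(\vecomega)$ sending a vertex $q_{v_{\vec{\sP}}}$ to the maximal element $\vec{\sP}$, sending an edge-vertex $q_e$ (with $e$ joining $v_{\vec{\sP}_1}$ and $v_{\vec{\sP}_2}$, common facet $\vec{\sE}$) to $\vec{\sE}$, and sending the two half-edge arrows at $q_e$ to the face relations $\vec{\sE}\leq \vec{\sP}_1$, $\vec{\sE}\leq \vec{\sP}_2$. Precomposing the diagram $\vec{\sP}\mapsto \Fl(\vec{\sP})$ with $\Phi$ recovers the $\Gamma(\vecomega)$-diagram defining $\varprojlim_{\Gamma(\vecomega)}\Fl$, so the universal property of $\Fl(\vecomega)$ furnishes the map.

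For the inverse, I would work functor-of-points. A $T$-valued point of $\varprojlim_{\Gamma(\vecomega)}\Fl$ is a collection $(x_{\vec{\sP}})$ over maximal cells $\vec{\sP}$ of $\pQ(\vecomega)$ satisfying the edge compatibilities $\varphi_{\vec{\sP}_1,\vec{\sE}}(x_{\vec{\sP}_1}) = \varphi_{\vec{\sP}_2,\vec{\sE}}(x_{\vec{\sP}_2})$ whenever $\vec{\sP}_1,\vec{\sP}_2$ share a common facet $\vec{\sE}$. For each $\vec{\sR}\in \pQ(\vecomega)$ pick any maximal $\vec{\sP}\geq \vec{\sR}$ and set $x_{\vec{\sR}} := \varphi_{\vec{\sP},\vec{\sR}}(x_{\vec{\sP}})$. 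The key step is independence of this choice: given two maximal cells $\vec{\sP}_1,\vec{\sP}_2\geq \vec{\sR}$, Proposition \ref{prop:subdBasisConnected} applied to $\MS(\vecomega)$ supplies a path $\vec{\sP}_1 = \vec{\sP}^{(0)},\vec{\sP}^{(1)},\ldots,\vec{\sP}^{(k)} = \vec{\sP}_2$ in $\Gamma_{\vec{\sR}}(\vecomega)$ whose consecutive maximal cells share common facets $\vec{\sE}^{(i)}\geq \vec{\sR}$. Applying $\varphi_{\vec{\sE}^{(i)},\vec{\sR}}$ to each edge compatibility and using transitivity of face maps (Formula (3.5)) gives $\varphi_{\vec{\sP}^{(i)},\vec{\sR}}(x_{\vec{\sP}^{(i)}}) = \varphi_{\vec{\sP}^{(i+1)},\vec{\sR}}(x_{\vec{\sP}^{(i+1)}})$; telescoping yields independence. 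Compatibility of the extended family with every face relation $\vec{\sR}_1\geq \vec{\sR}_2$ in $\pQ(\vecomega)$ then follows by choosing a single maximal $\vec{\sP}\geq \vec{\sR}_1$ and again invoking functoriality, after which the universal property of $\Fl(\vecomega)$ produces the sought-after morphism $\varprojlim_{\Gamma(\vecomega)}\Fl \to \Fl(\vecomega)$.

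The hard part is exactly the well-definedness argument above: the path must stay inside the star of $\vec{\sR}$ so that every intermediate face map can be composed with $\varphi_{\vec{\sE}^{(i)},\vec{\sR}}$, and this is precisely what Proposition \ref{prop:subdBasisConnected} guarantees. Once well-definedness is established, checking that the two morphisms are mutually inverse is routine: one composition restricts a $\pQ(\vecomega)$-compatible system to its values on maximal cells (recovering the original $\Gamma(\vecomega)$-system), and the other extends a $\Gamma(\vecomega)$-system and then restricts, which is the identity on maximal cells and determines the whole $\pQ(\vecomega)$-system uniquely. This mirrors Cueto's argument for thin Schubert cells in \cite[Appendix~C]{CoreyGrassmannians}.
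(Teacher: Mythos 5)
Your proposal is correct and follows essentially the same route as the paper: the forward map comes from the universal property via the inclusion of $\Quiv(\Gamma(\vecomega))^{\op}$ into the poset $\pQ(\vecomega)$, and the inverse is built by extending a $\Gamma(\vecomega)$-compatible system to all of $\pQ(\vecomega)$ using a chosen maximal cell above each $\vec{\sR}$, with well-definedness guaranteed by the connectedness of $\Gamma_{\vec{\sR}}(\vecomega)$ from Proposition \ref{prop:subdBasisConnected}. Your telescoping argument along a path in the star of $\vec{\sR}$ simply spells out the independence-of-choice step that the paper's (deliberately sketched) proof leaves implicit.
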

	
    \begin{proof}
		Denote by $\Fl_{\Gamma}(\vecomega)$ the inverse limit on the right.  
		As the opposite quiver $\Quiv(\Gamma(\vecomega))^{\op}$ naturally corresponds to a subposet of $\pQ(\vecomega)$ (equipped with the face order), the universal property of inverse limits yields a morphism $\Phi:\Fl(\vecomega)  \to \Fl_{\Gamma}(\vecomega)$. 

		Next, we construct a morphism $\Psi:\Fl_{\Gamma}(\vecomega)  \to \Fl(\vecomega)$ whose inverse is $\Phi$. Suppose $\vec{\sR} \in \pQ(\vecomega)$. Given any maximal element $\vec{\sP}$ of $\pQ(\vecomega)$ dominating $\vec{\sR}$, define $\Fl_{\Gamma}(\vecomega) \to \Fl(\vec{\sR})$ by the composition
		\begin{equation*}
		    \Fl_{\Gamma}(\vecomega) \to \Fl(\vec{\sP}) \xrightarrow{\varphi_{\vec{\sP},\vec{\sR}}} \Fl(\vec{\sR}).
		\end{equation*}
	This map does not depend on the choice of maximal $\vec{P} \geq \vec{\sR}$ as the induced subgraph of  $\Gamma(\vecomega)$ on the vertices $v_{\vec{\sP}}$ where $\vec{\sP}\geq \vec{\sR}$  is connected
	by Proposition \ref{prop:subdBasisConnected}. 
	\end{proof}

	\subsection{Initial degenerations of $\Flo{3}$ and $\Flo{4}$}
	Recall that $\{\se_{\lambda} \, : \, \lambda \in \sQ\}$ are the images of the standard basis vectors under $\Z^{\sQ} \to N(\sQ)$, and when $\sQ=\binom{[n]}{r}$ we have a natural identification $N(\sQ) = \wedge^r\Z^n / \Zone$. First, consider $\Flo{3}$. The multihomogeneous ideal of $\Fl(3)$ as a subvariety of $\P(\wedge^1 \C^3) \times \P(\wedge^2\C^3)$ is 
	\begin{equation*}
	\langle p_{1}p_{23} - p_{2}p_{13} + p_{3}p_{12} \rangle \subset \C[p_{1},p_{2},p_{3}] \otimes \C[p_{12},p_{13},p_{23}].    
	\end{equation*}
	The tropicalization $\TFl^{\circ}(3)$ lies in $(\wedge^1 \R^3) / \Zone \times (\wedge^2 \R^3) / \Zone$. 
	Its lineality space is $L_{\R}$ where
	\begin{equation*}
	L = \Span_{\Z}\left\{ \begin{array}{ccc}
	     \se_{1} + \se_{12} + \se_{13},& \se_{2} + \se_{12} + \se_{23},& \se_{3} + \se_{13} + \se_{23},  \\
	     \se_{2} + \se_{3} + \se_{23},& \se_{1} + \se_{3} + \se_{13},& \se_{1} + \se_{2} + \se_{12}  \\
	\end{array}  \right\}
	\end{equation*}
	and $\TFl^{\circ}(3)$ has 4 cones:
	\begin{equation*}
	\begin{array}{cccc}
	L_{\R}, & \R_{\geq 0} \cdot \se_1 + L_{\R}, & \R_{\geq 0} \cdot \se_2 + L_{\R}, & \R_{\geq 0} \cdot \se_3 + L_{\R}.
	\end{array}
	\end{equation*}
	\begin{proposition}
	    The initial degenerations of $\Fl^{\circ}(3)$ are smooth and irreducible.
	\end{proposition}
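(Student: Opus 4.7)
The plan is to exploit the explicit description of $\TFlo{3}$ given just above: it has only four cones, the lineality space $L_\R$ and the three rays $\R_{\geq 0}\cdot \se_i + L_\R$ for $i=1,2,3$. Since the $\Sn{3}$ action on $\Flo{3}$ by permutation of the coordinates of $\C^3$ cyclically permutes the three rays, and since initial degenerations depend only on the relative interior of the cone containing the weight vector, it suffices up to symmetry to treat two representative cases.

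For $\vec{\sw}\in L_\R$, by definition of the lineality space of the Gr\"obner fan the initial ideal equals the original ideal, so $\init_{\vec{\sw}}\Flo{3} = \Flo{3}$. I would identify this with the flag stratum of the uniform $(1,2)$-flag matroid $\vec{\sQ}_5$ on $[3]$, which was already shown to be smooth and irreducible of dimension $3$ in \S\ref{sec:strata3}.

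For the remaining case take $\vec{\sw}=\se_1$. The multihomogeneous ideal of $\Fl(3)$ is generated by the single Pl\"ucker relation $f = p_1 p_{23} - p_2 p_{13} + p_3 p_{12}$. Under this weighting $p_1 p_{23}$ has weight $1$ while the other two monomials have weight $0$, giving the binomial initial form $\init_{\se_1} f = -p_2 p_{13} + p_3 p_{12}$. Passing to affine coordinates on the ambient torus via $a=p_2/p_1$, $b=p_3/p_1$, $c=p_{13}/p_{12}$, $d=p_{23}/p_{12}$ identifies $T(\binom{[3]}{1}) \times T(\binom{[3]}{2})$ with $\GG_m^4$ and turns the relation into $b = ac$. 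Projecting out $b$ identifies $\init_{\se_1}\Flo{3}$ with $\GG_m^3$, which is smooth and irreducible.

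There is no genuine obstacle here: the argument is effectively a direct computation, enabled by the combinatorial smallness of $\TFlo{3}$ and the fact that the defining ideal of $\Fl(3)$ is principal. One could alternatively invoke Theorem \ref{thm:closedimmersion} and analyze the subdivision $\pQ(\vecomega)$ together with its inverse limit of flag matroid strata, but the direct route is considerably shorter.
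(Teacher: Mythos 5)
Your argument is correct and coincides with the paper's primary proof: both reduce by symmetry to the two cases $\vec{\sw}\in L_{\R}$ and $\vec{\sw}=\se_1$, and both use that the ideal of $\Fl(3)$ is principal so that $\init_{\se_1}\Fl^{\circ}(3)$ is cut out in the $4$-dimensional ambient torus by the single binomial $-p_2p_{13}+p_3p_{12}$, hence is a $3$-dimensional torus. The paper additionally sketches the alternative route via Theorem \ref{thm:closedimmersion} and the fiber product $\Fl(\vec{\sQ}_a)\times_{\Fl(\vec{\sQ}_{ab})}\Fl(\vec{\sQ}_b)$, purely to illustrate the general machinery, which you correctly identify as unnecessary here.
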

    \begin{proof}
    Up to $(\Sn{2}\times\Sn{3})$--symmetry, there are only 2 distinct initial degenerations: $\Fl^{\circ}(3)$ and $\init_{\vec{\sw}}\Fl^{\circ}(3)$ where $\vec{\sw} = \se_1$; it suffices to consider the latter. Since the ideal of $\Fl(3)$ is principal, the coordinate ring of $\init_{\vec{\sw}} \Fl^{\circ}(3)$ is the subring of $\GG_m^2$-invariants of
    \begin{equation*}
        \C[p_{1}^{\pm},p_{2}^{\pm},p_{3}^{\pm}]\otimes \C[p_{12}^{\pm},p_{13}^{\pm},p_{23}^{\pm}] / \langle - p_{2}p_{13} + p_{3}p_{12}  \rangle
    \end{equation*}
    which defines a smooth and irreducible variety. To illustrate the application of Theorem \ref{thm:closedimmersion}, we sketch another argument. Let 
    \begin{equation*}
        \vec{\sQ} = (\sQ_1,\sQ_2) = (\{1,2,3\}, \{12,13,23\})
    \end{equation*}
    and $\vec{\omega}$ the weighted point configuration corresponding to $\vec{\sw} = \se_1$.     The subdivision $\pQ(\vecomega)$, illustrated in Figure \ref{fig:hexagon}, has 2 maximal faces, whose flag matroids are $\vec{\sQ}_a$, $\vec{\sQ}_b$, that meet along $\vec{\sQ}_{ab}$ where
    \begin{equation*}
        \vec{\sQ}_{a} = (\{1,2,3\}, \{12,13\}), \hspace{20pt} \vec{\sQ}_{a} = (\{2,3\}, \{12,13,23\}), \hspace{20pt} \vec{\sQ}_{ab} = (\{2,3\}, \{12,13\}). 
    \end{equation*}
 By Proposition \ref{prop:fullComplex2Graph}, we have $\Fl(\vecomega) \cong \Fl(\vec{\sQ}_a) \times_{\Fl(\vec{\sQ}_{ab})} \Fl(\vec{\sQ}_b)$.    The stratum $\Fl(\vec{\sQ}_{a})$ is smooth and irreducible of dimension $2$, and the morphism $\Fl(\vec{\sQ}_{b}) \to \Fl(\vec{\sQ}_{ab}) $ is smooth and surjective whose fibers are connected and 1-dimensional. By \cite[Proposition~A.2]{CoreyGrassmannians}, $\Fl(\vecomega)$ is smooth and irreducible of dimension 3, and therefore $\init_{\vec{\sw}}\Fl^{\circ}(3)$ is smooth and irreducible by Theorem \ref{thm:closedimmersion} and \cite[Proposition~A.8]{CoreyGrassmannians}.
    \end{proof}

	Next, consider $\Flo{4}$. The multihomogeneous ideal of $\Fl(4)$ as a subvariety of $\P(\wedge^1\C^4) \times \P(\wedge^2\C^4) \times \P(\wedge^3\C^4)$ is the ideal of 
	\begin{equation*}
	\C[p_{1},p_{2},p_{3},p_{4}] \otimes \C[p_{12},p_{13},p_{14},p_{23},p_{24},p_{34}]\otimes \C[p_{123},p_{124},p_{134},p_{234}]
	\end{equation*}
	generated by
	\begin{align*}
	\begin{array}{ll}
	p_{1}p_{23} - p_{2}p_{13} + p_{3}p_{12}, & p_{1}p_{24} - p_{2}p_{14} + p_{4}p_{12}, \\
	p_{1}p_{34} - p_{3}p_{14} + p_{4}p_{13}, & p_{2}p_{34} - p_{3}p_{24} + p_{4}p_{23}, \\
	p_{12}p_{134} - p_{13}p_{124} + p_{14}p_{123}, & p_{12}p_{234} - p_{23}p_{124} + p_{24}p_{123}, \\
	p_{13}p_{234} - p_{23}p_{134} + p_{34}p_{123}, & p_{14}p_{234} - p_{24}p_{134} + p_{34}p_{124}, \\
	p_{12}p_{34} - p_{13}p_{24} + p_{14}p_{23}, & p_{1}p_{234} - p_{2}p_{134} + p_{3}p_{124} - p_{4}p_{123}.
	\end{array}
	\end{align*}
	The tropicalization of $\Flo{4}$ originally appears in \cite{BossingerLambogliaMinchevaMohammadi}; we summarize the data here. The set $\TFl^{\circ}(4)$ lies in 
	\begin{equation*}
	(\wedge^1\R^4)/\Zone \times (\wedge^2\R^4)/\Zone \times (\wedge^3\R^4) /\Zone \cong \R^{14}/\R^{3}.
	\end{equation*}  
	Its lineality space $L_{\R}$ is 3-dimensional, and the saturated subgroup $L$ is generated by 
	\begin{align*}
	\begin{array}{ll}
	     \se_{1} + \se_{12} + \se_{13} + \se_{14} + \se_{123} + \se_{124} + \se_{134}, &\se_{2} + \se_{12} + \se_{23} + \se_{24} + \se_{123} + \se_{124} + \se_{234}, \\
	     \se_{3} + \se_{13} + \se_{23} + \se_{34} + \se_{123} + \se_{134} + \se_{234}, &	\se_{4} + \se_{14} + \se_{24} + \se_{34} + \se_{124} + \se_{134} + \se_{234}, \\
    	 \se_{2} + \se_{3} + \se_{4} + \se_{23} + \se_{24} + \se_{34} + \se_{234}, &\se_{1} + \se_{3} + \se_{4} + \se_{13} + \se_{14} + \se_{34} + \se_{134}, \\
	     \se_{1} + \se_{2} + \se_{4} + \se_{12} + \se_{14} + \se_{24} + \se_{124}, &	\se_{1} + \se_{2} + \se_{3} + \se_{12} + \se_{13} + \se_{23} + \se_{123}.
	\end{array}
	\end{align*}
	The $f$-vector of $\TFl^{\circ}(4)$ is 
	\begin{equation*}
	f(\TFlo{4}) = (1,20,79,78) \equiv (1,3,5,5) \mod \Sn{2} \times \Sn{4}.
	\end{equation*}
	and $(\Sn{2}\times \Sn{4})$--orbit representatives of the cones are contained in Table \ref{tab:cones123-4}.
	
	\begin{table}[tbh!]
		\centering
		\begin{tabular}{|c|c|c|c||c|c|c|c|}
			\hline 
			& \textbf{Ray generators} & \textbf{dim} & $\Sn{2}\times \Sn{4}$ & & \textbf{Ray generators} & \textbf{dim} & $\Sn{2}\times \Sn{4}$ \\
			& & &  \textbf{orbit size} &  &  & & \textbf{orbit size}  \\
			\hline 
			\hline
			\textbf{1} & $\se_1$ & 4 & 8 & \textbf{8} &$\se_{12}, \se_{34}$ & 5 & 3 \\ 
			\hline 
			\textbf{2} & $\se_{12}$ & 4 & 6 & \textbf{9} & $\se_{12}, \se_3 + \se_4 + \se_{34}$ & 5 & 12 \\ 
			\hline 
			\textbf{3} & $\se_1 + \se_2 + \se_{12}$ & 4 & 6 & \textbf{10} & $\se_{1}, \se_{23}, \se_{124}$ & 6 & 24 \\
			\hline 
			\textbf{4} & $\se_{1}, \se_{23}$ & 5 & 24 & \textbf{11} & $\se_{1}, \se_{23}, \se_{1}+\se_{4}+\se_{14}$ & 6 & 24 \\
			\hline 
			\textbf{5} & $\se_{1}, \se_{123}$ & 5 & 12 & \textbf{12} & $\se_{1}, \se_{123}, \se_{1}+\se_{4}+\se_{14}$ & 6 & 12 \\
			\hline 
			\textbf{6} & $\se_{1}, \se_{234}$ & 5 & 4 & \textbf{13} & $\se_1, \se_{234}, \se_{1}+\se_{2}+\se_{12}$ & 6 & 12 \\
			\hline 
			\textbf{7} & $\se_{1}, \se_{1}+\se_{2}+\se_{12}$ & 5 & 24 & \textbf{14} & $\se_{12}, \se_{34}, \se_{1}+\se_{2}+\se_{12}$ & 6 & 6 \\
			\hline 
		\end{tabular}
		\caption{Cones of $\TFl^{\circ}(4)$ up to $(\Sn{2}\times \Sn{4})$--symmetry}
		\label{tab:cones123-4}
	\end{table}

	\begin{theorem}
		\label{thm:initFl4}
		Let  $\vec{\sw} \in \TFl^{\circ}(4)$ and $\vecomega = (\vec{\sQ},\vec{\sa},\vec{\sw})$. The inverse limit $\Fl(\vecomega)$ is smooth and irreducible of dimension $6 = \dim \Fl(4)$. In particular, $\init_{\vec{\sw}}\Flo{4}$ is smooth and irreducible.
	\end{theorem}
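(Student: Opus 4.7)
The strategy is to follow the hexagon-style argument from the proof of the $\Fl^{\circ}(3)$ case. Since $\init_{\vec{\sw}}\Fl^{\circ}(4)$ and $\Fl(\vecomega)$ depend only on the relative interior of the cone of $\TFl^{\circ}(4)$ containing $\vec{\sw}$, and the group $\Sn{2}\times\Sn{4}$ acts on this fan, it suffices to verify the statement for one representative $\vec{\sw}$ in each of the $14$ orbits recorded in Table~\ref{tab:cones123-4}. When $\vec{\sw}\in L_{\R}$, the subdivision $\pQ(\vecomega)$ is trivial, so $\Fl(\vecomega)=\Fl^{\circ}(4)$, which is smooth and irreducible of dimension $6$ by Proposition~\ref{prop:flagStrata4} applied to the uniform flag matroid; in particular this case is already settled, and we may assume the cone is positive-dimensional.

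For each of the remaining $14$ representatives, the plan is first to compute, with the aid of \texttt{polymake.jl} and OSCAR, the coherent matroidal subdivision $\pQ(\vecomega)$ of $\Pi_{4}$, together with the list of its maximal cells, their codimension-$1$ overlaps, and hence the adjacency graph $\Gamma(\vecomega)$. By Proposition~\ref{prop:fullComplex2Graph}, this reduces the computation of $\Fl(\vecomega)$ to a finite inverse limit $\varprojlim_{\Gamma(\vecomega)}\Fl$. Every maximal cell $\vec{\sP}$ of $\pQ(\vecomega)$ satisfies $\dim\Delta(\vec{\sP})=3$, so Proposition~\ref{prop:flagStrata4} gives smoothness and irreducibility of $\Fl(\vec{\sP})$, and every codimension-$1$ face arising as the overlap of two adjacent maximal cells is internal, so Proposition~\ref{prop:flagStrataMaps4} gives that the two face morphisms emanating from it are smooth and dominant with connected fibers.

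These are precisely the hypotheses of \cite[Proposition~A.2]{CoreyGrassmannians}, which says that a finite inverse limit of smooth irreducible schemes along smooth surjective morphisms with connected fibers is smooth and irreducible, with dimension obtained by adding the dimensions of the maximal pieces and subtracting the dimensions of the edges of a spanning tree (and checking that cycles of $\Gamma(\vecomega)$ do not increase codimension). Running this computation over the $14$ orbits should give smoothness, irreducibility, and dimension equal to $6$ in every case. With $\Fl(\vecomega)$ smooth irreducible of dimension $6$, Theorem~\ref{thm:closedimmersion} supplies a closed immersion $\init_{\vec{\sw}}\Fl^{\circ}(4)\hookrightarrow\Fl(\vecomega)$; as $\init_{\vec{\sw}}\Fl^{\circ}(4)$ is equidimensional of dimension $\dim\Fl^{\circ}(4)=6$ (by Gr\"obner theory, cf.\ \cite[Proposition~A.8]{CoreyGrassmannians}), this closed immersion must be an isomorphism, yielding the \emph{In particular} statement.

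The main obstacle is not any single case but the bulk of the combinatorial audit: each of the $14$ orbits gives rise to a different adjacency graph of $\pQ(\vecomega)$, some with many vertices and cycles, and one has to confirm that every codimension-$1$ face appearing in the subdivision is internal (so that Proposition~\ref{prop:flagStrataMaps4} applies rather than a potentially weaker statement on the boundary of $\Pi_4$). A secondary subtlety is to ensure that every maximal cell $\vec{\sP}$ is one of the $15$ top-dimensional flag matroids catalogued in Table~\ref{table:Flag123-4}, up to the $\Sn{2}\times\Sn{4}$-symmetry; once this is uniformly confirmed, the application of \cite[Proposition~A.2]{CoreyGrassmannians} is mechanical, and the dimension count becomes the Euler-characteristic-style tally $\sum_{v}\dim\Fl(\vec{\sP}_{v})-\sum_{e}\dim\Fl(\vec{\sP}_{e})=6$ along a spanning tree of $\Gamma(\vecomega)$.
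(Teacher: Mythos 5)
Your outline matches the paper's strategy for the acyclic cases, but it has a genuine gap exactly where the paper's proof does its real work: the adjacency graphs with cycles. The result you invoke, \cite[Proposition~A.2]{CoreyGrassmannians}, handles a single fiber product $X\times_{Z}Y$ with one leg smooth and surjective with connected fibers, and its tree-shaped generalization \cite[Proposition~A.6]{CoreyGrassmannians} handles limits over trees; neither controls a limit over a graph with a cycle. For a cycle, the limit is the tree limit cut down by an extra compatibility condition coming from the closing edge, and smooth dominant face maps with connected fibers do \emph{not} by themselves guarantee that this extra condition is harmless --- it could drop the dimension or create singularities. Your parenthetical ``checking that cycles do not increase codimension'' names the problem but supplies no mechanism for resolving it, and most of the fourteen orbits (all but the graphs $1,2,4,6,8$) fall into this case.

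The paper closes this gap with two ingredients you do not have. First, Example~\ref{ex:forClosedImmersion} exhibits, for the recurring flag matroid $\vec{\sQ}$ with bases $\{1,2,4,12,24,123,124,234\}$, a \emph{closed immersion} $(\varphi_{\vec{\sQ},\vec{\sQ}_a},\varphi_{\vec{\sQ},\vec{\sQ}_b}):\Fl(\vec{\sQ})\hookrightarrow\Fl(\vec{\sQ}_a)\times\Fl(\vec{\sQ}_b)$; deleting the corresponding vertices from $\Gamma(\vecomega)$ leaves a tree $T$, and reattaching them one at a time realizes $\Fl(\vecomega)$ as a closed subscheme of the smooth irreducible six-dimensional $\varprojlim_{T}\Fl$. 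Second, the inequality $\dim\Fl(\vecomega)\geq 6$ is obtained \emph{a priori} from Theorem~\ref{thm:closedimmersion} together with the fact that $\init_{\vec{\sw}}\Fl^{\circ}(4)$ is a flat degeneration of $\Fl^{\circ}(4)$ --- note the logical order is the reverse of yours: the dimension of the initial degeneration is an input used to show $\Fl(\vecomega)$ fills out the tree limit (via \cite[Lemma~6.11]{CoreySpinor}), not a consequence read off afterwards. Without these two steps your argument does not establish the theorem for the cyclic graphs. Your concluding deduction of the ``in particular'' statement is essentially the paper's (via \cite[Proposition~A.8]{CoreyGrassmannians}), and is fine once $\Fl(\vecomega)$ is known to be smooth, irreducible, and six-dimensional.
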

	
	\begin{proof}
    If $\Gamma(\vecomega)$ is a tree, as in the graphs  1, 2, 4, 6, or 8, then $\varprojlim_{\Gamma(\vecomega)} \Fl$ is smooth and irreducible of dimension 6 by Propositions \ref{prop:flagStrata4}, \ref{prop:flagStrataMaps4}, Table \ref{table:Flag123-4}, and \cite[Proposition~A.6]{CoreyGrassmannians}.    
		
	Now suppose $\Gamma(\vecomega)$ is one of the remaining graphs. Let $\vec\sQ,\vec\sQ_a,\vec\sQ_b$ be the flag matroids from Example \ref{ex:forClosedImmersion}. Let $T$ be the subgraph obtained by removing the vertices of $\Gamma(\vecomega)$ whose flag matroid is isomorphic $\sQ$, together with their adjacent edges; in each case $T$ is a tree and the limit $\varprojlim_{T} \Fl$ is smooth and irreducible of dimension $6$ (following the same procedure as in the previous paragraph). 
		
		For simplicity, suppose that $T$ is obtained by removing a single vertex and its two adjacent edges, as is in the graphs 5 and 10. Then 
	    \begin{equation*}
		\xymatrix
		{{ \Fl(\vecomega)  }\ar[r]^-{} \ar[d]_-{} & \Fl(\vec{\sQ}) \ar[d]^{\varphi_{\vec\sQ,\vec\sQ_a}, \varphi_{\vec\sQ,\vec\sQ_b}}\\
			{\varprojlim_{T} \Fl } \ar[r]^{} & \Fl(\vec\sQ_a) \times \Fl(\vec\sQ_b)}
		\end{equation*} 	
		is a pullback diagram (we are implicitly applying Proposition \ref{prop:fullComplex2Graph}), and $\varphi_{\vec\sQ,\vec\sQ_a}, \varphi_{\vec\sQ,\vec\sQ_b}$ is a closed immersion by Example \ref{ex:forClosedImmersion}. The dimension of $\Fl(\vecomega)$ is at least $6$ by Theorem \ref{thm:closedimmersion} and the fact that $\init_{\vec{\sw}}\Fl^{\circ}(4)$, being flat degenerations of $\Fl^{\circ}(4)$, has dimension $6$. Therefore $\varprojlim_{\Gamma(\vecomega)} \Fl$ is smooth and irreducible of dimension $6$ by \cite[Lemma~6.11]{CoreySpinor}. 
		The remaining cases may be handled in a similar fashion, that is, by ``reattaching'' each of the vertices outside of $T$ one at a time. 
		The last statement follows from Theorem \ref{thm:closedimmersion} and \cite[Proposition~A.8]{CoreyGrassmannians}.
	\end{proof}

	\section{The Chow quotient $\chow{\Fl(4)}{H}$}
	\label{sec:Chow}
	
	In this final section, we study the normalization of the Chow quotient $\chow{\Fl(n)}{H}$ where $H \subset \PGL(n)$ is the diagonal torus. This is a compactification of $\Fl^{\circ}(n)/H$, which has a modular description inspired by \cite{Hu}. When $n=4$, $\chow{\Fl(n)}{H}$ is a sch\"on compactification of $\Fl^{\circ}(n)/H$, and we may use tropical techniques by \cite{HackingKeelTevelev2009} to determine its log canonical model. 
	
	\subsection{A torus action on $\Fl(\vec{r},n)$}
	
	Recall that the diagonal torus of $\GL(n)$ acts on $\C^{n}$ by scaling coordinates, and this induces an action of the diagonal torus  $H\subset \PGL(n)$ on the linear subspaces of $\C^{n}$ which preserves inclusions. In particular, this produces the action $H\curvearrowright \Fl(\vec{r},n)$ described in the introduction. 
	
	Let $\vec{\sQ}$ be the uniform $(\vec{r},n)$--flag matroid (see Remark \ref{rmk:matroidPolytope}), and  $L \subset N(\sQ_1)\times \cdots \times N(\sQ_s)$ the saturation of the image of the map in Formula \eqref{eq:definingL}.  The subtorus $T_L \subset T(\sQ_1) \times \cdots \times T(\sQ_s)$ may be identified with $H$, and the scaling action of $T_L \curvearrowright \P(\wedge^{r_1}\C^n) \times \cdots \times \P(\wedge^{r_s}\C^n)$ restricts to the above action $H \curvearrowright \Fl(\vec{r},n)$. The embedding $\Fl(\vec{r},n)\hookrightarrow \P(\wedge^{r_1}\C^n) \times \cdots \times \P(\wedge^{r_s}\C^n)$ induces an embedding of normalized Chow quotients
	\begin{equation*}
	\chow{\Fl(\vec{r},n)}{H} \hookrightarrow \chow{\left(\P(\wedge^{r_1}\C^n) \times \cdots \times \P(\wedge^{r_s}\C^n)\right)}{H}.
	\end{equation*}
	By the main results of \cite{KapranovSturmfelsZelevinsky}, the normalization of the Chow quotient of a projective toric variety by a subtorus of its dense torus is also a projective toric variety, and its polytope is a fiber polytope.  In our case, the polytope of $\chow{\left(\P(\wedge^{r_1}\C^n) \times \cdots \times \P(\wedge^{r_s}\C^n)\right)}{H}$ is the fiber polytope associated to the projection in Formula \eqref{eq:fiberPolytopeMap}, whose normal fan is $\pF(\vec{\sQ},\vec{\sa}) / L_{\R}$.

	\subsection{Sch\"on compactifications}
	Given a rational polyhedral \textit{pointed} fan $\pF$ in $N(\sQ)$, denote by $X(\pF)$ its toric variety. 
	Suppose $Y^{\circ} \subset T(\sQ)$ is a closed subvariety. The closure $Y$ of $Y^{\circ}$ in a $T(\sQ)$--toric variety $X(\pF)$ is a \textit{sch\"on compactification} if the multiplication map $T(\sQ)\times Y \to X(\pF)$ is smooth and surjective; in this case the \textit{support} of $\pF$, denoted $|\pF|$, is $\Trop(Y^{\circ})$. The variety $Y^{\circ}$ is \textit{sch\"on} if $Y^{\circ}$ has a sch\"on compactification, equivalently, $\init_{\sw} Y^{\circ}$ is smooth for all $\sw\in \Trop(Y^{\circ})$ \cite[Proposition~3.9]{HelmKatz}. If $Y^{\circ}$ is sch\"on and $\pF$ is any pointed rational polyhedral fan with $|\pF| = \Trop(Y^{\circ})$, then the closure of $Y^{\circ}$ in $X(\pF)$ is a sch\"on compactification \cite[Theorem~1.5]{LuxtonQu}.  
	
	We now specialize to the complete flag variety $\Fl(4)$. In this case, $\TFl^{\circ}(4) = \Dr(\vec{\sQ})$ where $\vec{\sQ}$ is the uniform $((1,2,3),4)$--flag matroid, see Remark \ref{rmk:matroidPolytope} and \S \ref{sec:inverseLimitsAndInitialDegenerations}. The fan $\pF_4 := \pF_{\Dr}(\vec{\sQ},\vec{\sa})/L_{\R}$ has support  $\TFlo{4} / L_{\R}$ and is described in Table \ref{tab:cones123-4}. Let
	\begin{gather*}
	    \sigma_1 = \R_{\geq 0}\langle \se_{12}, \se_{34}, \se_{1} + \se_{2} + \se_{12}\rangle + L_{\R} \hspace{15pt} \sigma_2 = \R_{\geq 0}\langle \se_{12}, \se_{34}, \se_{3} + \se_{4} + \se_{34}\rangle + L_{\R} \\ 
	    \tau = \R_{\geq 0}\langle \se_{12}, \se_{34}\rangle + L_{\R}
	\end{gather*}
	Then $\sigma_{1}$ and $\sigma_{2}$ glue along $\tau$ to form the convex polyhedral cone
	\begin{equation*}
	    \sigma = \R_{\geq 0}\langle \se_{12}, \se_{34}, \se_{1} + \se_{2} + \se_{12}, \se_{3} + \se_{4} + \se_{34}\rangle + L_{\R}
	\end{equation*}
	Doing this for the 3 pairs of cones in the 14th orbit in Table \ref{tab:cones123-4} determines a coarser fan $\pF_4'$ with support $\TFlo{4}/L_{\R}$. 
	
	Given a cone $\tau$ in $N(\sQ)_{\R}$, denote by $\langle \tau \rangle$ the linear subspace spanned by $\tau$, and $\Star(\tau)$ the star of $\tau$, viewed as a fan in $(N/\langle \tau \rangle)_{\R}$. 
	\begin{lemma}
	\label{lem:notPreservedUnderTranslation}
		For each cone $\tau$ in $N(\sQ)_{\R}$ such that  $\tau / L_{\R} \in \pF_4'$, the set $|\Star(\tau)| \subset (N/\langle\tau \rangle)_{\R}$ is not preserved under translation by a rational subspace of $(N/\langle\tau \rangle)_{\R}$. 
	\end{lemma}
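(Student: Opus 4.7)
The plan is to verify the statement cone by cone, using the $(\Sn{2} \times \Sn{4})$-equivariance of $\pF_4'$ to reduce to a finite list of orbit representatives. A set of orbit representatives for the cones of $\pF_4'$ can be read off from Table \ref{tab:cones123-4} together with the coarsening prescription: the orbits 1--7 and 9--13 are inherited unchanged, orbit 8 (represented by $\tau = \R_{\geq 0}\langle \se_{12}, \se_{34}\rangle + L_{\R}$) no longer defines a cone of $\pF_4'$ since it has been absorbed into the relative interior of a glued $\sigma$, orbit 14 is replaced by the orbit(s) of the new top-dimensional cones $\sigma$. After this reduction, it suffices to check triviality of the lineality of $|\Star(\tau)|$ for one $\tau$ per orbit.

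For each such orbit representative $\tau$, I would explicitly compute $\Star(\tau)$ inside $(N/\langle\tau\rangle)_{\R}$ by enumerating those cones of $\pF_4'$ that contain $\tau$ and projecting their ray generators modulo $\langle\tau\rangle$. With this description in hand, the lineality space of the support $|\Star(\tau)|$ can be extracted by a standard polyhedral computation, implemented in the \texttt{polymake.jl}/OSCAR code accompanying the paper. The goal is to certify that this lineality space equals $\{0\}$ in every case; since $\pF_4'$ is a rational fan, the lineality space of any of its supports is automatically a rational subspace, so vanishing of the lineality space at once rules out every rational translation-invariant direction.

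The main obstacle is combinatorial bookkeeping rather than hard mathematics: one must carefully produce the correct list of maximal cones of $\pF_4'$ (equivalently, pair up the 14th-orbit cones into their three glued $\sigma$'s, and verify that no other cones of $\pF_4$ would be damaged by the coarsening so that $\pF_4'$ really is a fan), and then, for each $\tau$, correctly identify the set of cones of $\pF_4'$ containing $\tau$. In the borderline case where $\tau$ is itself a face of a glued cone $\sigma$ but was previously only a face of the two component cones of $\sigma$, the star of $\tau$ in $\pF_4'$ differs nontrivially from its star in $\pF_4$, and this is the situation where a rational lineality could a priori appear; the substantive content of the lemma is that it does not. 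Once the stars are listed, each lineality check reduces to a small linear-algebra computation that the referenced code performs automatically.
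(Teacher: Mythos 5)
Your proposal is correct and takes essentially the same route as the paper: a finite, symmetry-reduced, computer-assisted verification over the cones of $\pF_4'$ that the support of each star admits no nonzero translation-invariant subspace. The one step you gloss over as ``a standard polyhedral computation''---extracting the translation-invariance subspace of the (non-convex) union of cones $|\Star(\tau)|$---is exactly where the paper invokes \cite[Lemma~7.2]{CoreyGrassmannians}: any such subspace must lie in $\bigcap_{\sigma\supset\tau}\langle\sigma\rangle$, the intersection over maximal cones of $\pF_4'$ containing $\tau$, so the check reduces to verifying that a stacked matrix of linear equations cutting out these spans has rank $14-\dim\langle\tau\rangle$; you would need to make this reduction explicit for your computation to be well-defined.
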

	
	\begin{proof}
    By \cite[Lemma~7.2]{CoreyGrassmannians}, we must show that
    \begin{equation}
    \label{eq:tauEqualsIntersectionSigmas}
        \langle \tau \rangle = \bigcap_{\sigma\supset \tau} \; \langle \sigma \rangle
    \end{equation}
    where the intersection is taken over all cones $\sigma$ containing $\tau$ as a face such that $\sigma/L_{\R}$ is a maximal cone of $\pF_4'$. Given such a $\sigma$, define a matrix $A_{\sigma}$ such that $\langle \sigma \rangle = \ker A_{\sigma}$. Let  $\widetilde{A}_{\tau}$ be the matrix whose block rows are the $A_{\sigma}$ such that $\sigma\supset \tau$ and $\sigma/L_{\R}\in \pF_4'$ is maximal. The intersection on the right in Formula \eqref{eq:tauEqualsIntersectionSigmas} is the kernel of $\widetilde{A}_{\tau}$. Thus, for each cone $\tau/L_{\R}$ of $\pF_{4}'$, we must show that the rank of $\widetilde{A}_{\tau}$ equals $14 - \dim \langle \tau \rangle$. In fact, we need only show this for the non-maximal cones. This is a direct verification; we use OSCAR. 
 	\end{proof}

\noindent We prove Theorem \ref{thm:chowIntro} in the following form.	
	
	\begin{theorem}
		Consider the complete flag variety $\Fl(4)$. 
		\begin{enumerate}
			\item The normalized Chow quotient $\chow{\Fl(4)}{H}$ is a smooth, sch\"on, and simple normal crossings compactification  $\Fl_{\bone}^{\circ}(3)$.
			\item The closure $\Fl_{\bone}^{\lc}(3)$ of $\Flo{4} / H$ in $X(\pF_4')$ is a sch\"on and log canonical compactification of $\Fl_{\bone}^{\circ}(3)$. 
			\item The refinement $\pF_4 \to \pF_4'$ induces a log crepant resolution of singularities $\chow{\Fl(4)}{H} \to \Fl_{\bone}^{\lc}(3)$. 
		\end{enumerate}
	\end{theorem}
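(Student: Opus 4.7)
The three parts are naturally tackled in sequence, each building on the previous and on Theorem~\ref{thm:initFl4}.

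For part (1), the plan is to identify $\chow{\Fl(4)}{H}$ as the closure of $\Fl^{\circ}_{\bone}(3) = \Flo{4}/H$ in the smooth toric variety $X(\pF_4)$. By \cite{KapranovSturmfelsZelevinsky}, the normalized Chow quotient of $\P(\wedge^1\C^4)\times\P(\wedge^2\C^4)\times\P(\wedge^3\C^4)$ by $H$ is the projective toric variety with fan $\pF(\vec{\sQ},\vec{\sa})/L_{\R}$, where $\vec{\sQ}$ is the uniform flag matroid, and the normalized Chow quotient $\chow{\Fl(4)}{H}$ is obtained by closing $\Flo{4}/H$ in the subtoric variety $X(\pF_4)$ supported on $\TFlo{4}/L_{\R}$. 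Theorem~\ref{thm:initFl4} tells us that every initial degeneration of $\Flo{4}$ is smooth, hence so is every initial degeneration of $\Flo{4}/H$, so $\Flo{4}/H$ is sch\"on by \cite[Proposition~3.9]{HelmKatz}. The closure is then a sch\"on compactification by \cite[Theorem~1.5]{LuxtonQu}, and inspection of Table~\ref{tab:cones123-4} (together with the explicit generators of $L$) shows that $\pF_4$ is a smooth fan, so $X(\pF_4)$ is smooth and the boundary of $\chow{\Fl(4)}{H}$ in $X(\pF_4)$ is simple normal crossings.

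For part (2), I would invoke the characterization of log canonical sch\"on compactifications from \cite{HackingKeelTevelev2009}: given a sch\"on $Y^{\circ}\subset T(\sQ)$, the closure of $Y^{\circ}$ in $X(\pF)$ is the log canonical model of $Y^{\circ}$ precisely when $\pF$ is a pointed fan with $|\pF|=\Trop(Y^{\circ})$ such that for every cone $\tau$ of $\pF$, the set $|\Star(\tau)|$ is not preserved under translation by any nontrivial rational subspace. Applied to $Y^\circ=\Flo{4}/H$ and $\pF=\pF_4'$, the support condition holds by construction, and the translation-invariance condition is exactly Lemma~\ref{lem:notPreservedUnderTranslation}. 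Sch\"onness of $\Fl_{\bone}^{\lc}(3)$ follows again from \cite[Theorem~1.5]{LuxtonQu} once we know $\Flo{4}/H$ is sch\"on, established in part (1).

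For part (3), the key observation is that the refinement $\pF_4 \to \pF_4'$ adds \emph{no new rays}: the only non-simplicial maximal cones of $\pF_4'$ are those in the $\Sn{2}\times\Sn{4}$-orbit of $\sigma$, and $\pF_4$ refines each such $\sigma$ into $\sigma_1\cup\sigma_2$ along the face $\tau$, using only the four rays already present in $\sigma$. A toric refinement introducing no new rays produces a toric birational morphism $X(\pF_4)\to X(\pF_4')$ whose boundary divisors on the source pull back to the boundary divisors on the target with multiplicity one, so it is crepant with respect to the toric log canonical divisor. Restricting to the sch\"on closures and using that the log canonical divisor of a sch\"on compactification is the restriction of the toric log canonical divisor (by adjunction, as in \cite{HackingKeelTevelev2009}), the induced morphism $\chow{\Fl(4)}{H}\to\Fl_{\bone}^{\lc}(3)$ is log crepant. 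It is a resolution of singularities because the source is smooth by part (1) and it is birational since it is an isomorphism over the common dense open $\Fl^{\circ}_{\bone}(3)$.

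The main obstacle will be part (1): one has to be careful in translating between the Chow-quotient fan of Kapranov--Sturmfels--Zelevinsky and the fiber/Dressian fan $\pF_4$ used here, and then verify smoothness of $\pF_4$ case-by-case using the ray data of Table~\ref{tab:cones123-4}. The log-crepancy step in part (3) is conceptually easy given the no-new-rays observation, but requires care in writing down the discrepancy computation on the boundary after restriction from the ambient toric variety.
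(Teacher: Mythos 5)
Your proposal is correct and follows essentially the same route as the paper: identify $\chow{\Fl(4)}{H}$ with the closure of $\Flo{4}/H$ in $X(\pF_4)$, deduce sch\"onness from the smoothness of initial degenerations (Theorem \ref{thm:initFl4}), check that $\pF_4$ is strictly simplicial from Table \ref{tab:cones123-4}, use Lemma \ref{lem:notPreservedUnderTranslation} for the log canonical criterion, and conclude log crepancy of the toric modification (the paper simply cites \cite[Theorem~1.4]{Tevelev} for your hands-on no-new-rays discrepancy argument). The only point to be slightly careful about is that the log canonical criterion also uses \emph{irreducibility} of the initial degenerations of $\Flo{4}/H$ (via the isomorphism $\init_{\vec{\sw}}\Fl^{\circ}(4)\cong(\init_{\vec{\sw}'}\Fl^{\circ}(4)/H)\times H$), which Theorem \ref{thm:initFl4} supplies but your sketch does not mention.
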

	
	\begin{proof}
		Consider statement (1). The normalized Chow quotient $\chow{\Fl(4)}{H}$ is the closure of $\Fl_{\bone}^{\circ}(3) = \Flo{4}/H$ in $X(\pF_4)$. Using Table \ref{tab:cones123-4}, one readily verifies that $\pF_4$ is strictly simplicial. As $\pF_4$ is supported on $\Trop(\Flo{4} / H)$, we have that $\chow{\Fl(4)}{H}$ is a sch\"on compactification of $\Flo{4} / H$, and is smooth with a simple normal crossings boundary, see \cite[Proposition~7.1]{CoreySpinor}.  
		
		Next, consider (2). The initial degenerations of $\Fl^{\circ}(4)/H$ are smooth and irreducible by Theorem \ref{thm:initFl4} and the isomorphism
		\begin{equation*}
		    \init_{\vec{\sw}} \Fl^{\circ}(n) \cong (\init_{\vec{\sw}'} \Fl^{\circ}(n)/H) \times H 
		\end{equation*}
		for any $\vec{\sw} \in \TFl^{\circ}(n)$; here $\vec{\sw}'$ denotes the image of $\vec{\sw}$ in  $(N(\sQ_1) \times \cdots \times N(\sQ_{n-1}))_{\R}/L_{\R}$. Compare to \cite[Lemma~7.3]{CoreySpinor} and \cite[Lemma~7.1]{CoreyGrassmannians}. The statement now follows from Lemma \ref{lem:notPreservedUnderTranslation} and \cite[Proposition~7.1]{CoreySpinor}. 
		Finally, (3) follows from (1) and \cite[Theorem~1.4]{Tevelev}.\qedhere
	\end{proof}

	\bibliographystyle{abbrv}
	\bibliography{bibliographie}

\begin{thebibliography}{10}

\bibitem{AguiarArdila}
M.~Aguiar and F.~Ardila.
\newblock Hopf monoids and generalized permutahedra.
\newblock {Preprint}, arXiv:1709.07504.

\bibitem{AlexeevBrion}
V.~Alexeev and M.~Brion.
\newblock Toric degenerations of spherical varieties.
\newblock {\em Selecta Math. (N.S.)}, 10(4):453--478, 2004.

\bibitem{ArdilaKlivans}
F.~Ardila and C.~J. Klivans.
\newblock The {B}ergman complex of a matroid and phylogenetic trees.
\newblock {\em J. Combin. Theory Ser. B}, 96(1):38--49, 2006.

\bibitem{BilleraSturmfels}
L.~J. Billera and B.~Sturmfels.
\newblock Fiber polytopes.
\newblock {\em Ann. of Math. (2)}, 135(3):527--549, 1992.

\bibitem{BossingerFriasMedinaMageeNajeraChavez}
L.~Bossinger, B.~Fr\'{\i}as-Medina, T.~Magee, and A.~N\'{a}jera~Ch\'{a}vez.
\newblock Toric degenerations of cluster varieties and cluster duality.
\newblock {\em Compos. Math.}, 156(10):2149--2206, 2020.

\bibitem{BossingerLambogliaMinchevaMohammadi}
L.~Bossinger, S.~Lamboglia, K.~Mincheva, and F.~Mohammadi.
\newblock Computing toric degenerations of flag varieties.
\newblock In {\em Combinatorial algebraic geometry}, volume~80 of {\em Fields
  Inst. Commun.}, pages 247--281. Fields Inst. Res. Math. Sci., Toronto, ON,
  2017.

\bibitem{BrandtEurZhang}
M.~Brandt, C.~Eur, and L.~Zhang.
\newblock Tropical flag varieties.
\newblock {\em Advances in Mathematics}, 384:107695, 2021.

\bibitem{Caldero}
P.~Caldero.
\newblock Toric degenerations of {S}chubert varieties.
\newblock {\em Transform. Groups}, 7(1):51--60, 2002.

\bibitem{CoreyGrassmannians}
D.~Corey.
\newblock Initial degenerations of {G}rassmannians.
\newblock {\em Sel. Math. New Ser.}, 27(57), 2021.

\bibitem{CoreySpinor}
D.~Corey.
\newblock Initial degenerations of spinor varieties, 2022.
\newblock {Preprint}, arXiv:2104.03442.

\bibitem{CoreyLuber}
D.~Corey and D.~Luber.
\newblock The {G}rassmannian of 3-planes in $\mathbb{C}^{8}$ is schön, 2022.
\newblock {Preprint}, arXiv:2206.14993.

\bibitem{DeLoeraRambauSantos}
J.~A. De~Loera, J.~Rambau, and F.~Santos.
\newblock {\em Triangulations}, volume~25 of {\em Algorithms and Computation in
  Mathematics}.
\newblock Springer-Verlag, Berlin, 2010.
\newblock Structures for algorithms and applications.

\bibitem{DressWenzel1992}
A.~W.~M. Dress and W.~Wenzel.
\newblock Valuated matroids.
\newblock {\em Adv. Math.}, 93(2):214--250, 1992.

\bibitem{OSCAR-book}
C.~Eder, W.~Decker, C.~Fieker, M.~Horn, and M.~Joswig, editors.
\newblock {\em The OSCAR book}.
\newblock 2024.

\bibitem{FangFourierLittelmann}
X.~Fang, G.~Fourier, and P.~Littelmann.
\newblock On toric degenerations of flag varieties.
\newblock In {\em Representation theory---current trends and perspectives}, EMS
  Ser. Congr. Rep., pages 187--232. Eur. Math. Soc., Z\"{u}rich, 2017.

\bibitem{FultonYT}
W.~Fulton.
\newblock {\em Young tableaux}, volume~35 of {\em London Mathematical Society
  Student Texts}.
\newblock Cambridge University Press, Cambridge, 1997.
\newblock With applications to representation theory and geometry.

\bibitem{polymake}
E.~Gawrilow and M.~Joswig.
\newblock {\em polymake: a Framework for Analyzing Convex Polytopes}, pages
  43--73.
\newblock Birkh{\"a}user Basel, Basel, 2000.

\bibitem{GrossHackingKeelKontsevich}
M.~Gross, P.~Hacking, S.~Keel, and M.~Kontsevich.
\newblock Canonical bases for cluster algebras.
\newblock {\em J. Amer. Math. Soc.}, 31(2):497--608, 2018.

\bibitem{HackingKeelTevelev2009}
P.~Hacking, S.~Keel, and J.~Tevelev.
\newblock Stable pair, tropical, and log canonical compactifications of moduli
  spaces of del {P}ezzo surfaces.
\newblock {\em Invent. Math.}, 178(1):173--227, 2009.

\bibitem{HaradaKaveh}
M.~Harada and K.~Kaveh.
\newblock Integrable systems, toric degenerations and {O}kounkov bodies.
\newblock {\em Invent. Math.}, 202(3):927--985, 2015.

\bibitem{HelmKatz}
D.~Helm and E.~Katz.
\newblock Monodromy filtrations and the topology of tropical varieties.
\newblock {\em Canad. J. Math.}, 64(4):845--868, 2012.

\bibitem{HerrmannJensenJoswigSturmfels}
S.~Herrmann, A.~Jensen, M.~Joswig, and B.~Sturmfels.
\newblock How to draw tropical planes.
\newblock {\em The Electronic Journal of Combinatorics}, 16(2), 2009.

\bibitem{Hu}
Y.~Hu.
\newblock The geometry and topology of quotient varieties of torus actions.
\newblock {\em Duke Math. J.}, 68(1):151--184, 1992.

\bibitem{HuberRambauSantos}
B.~Huber, J.~Rambau, and F.~Santos.
\newblock The {C}ayley trick, lifting subdivisions and the {B}ohne-{D}ress
  theorem on zonotopal tilings.
\newblock {\em J. Eur. Math. Soc. (JEMS)}, 2(2):179--198, 2000.

\bibitem{JoswigLohoLuberOlarte}
M.~Joswig, G.~Loho, D.~Luber, and J.~A. Olarte.
\newblock Generalized permutahedra and positive flag dressians, 2021.

\bibitem{polymakeJL}
M.~Kaluba, B.~Lorenz, and S.~Timme.
\newblock Polymake.jl: A new interface to polymake.
\newblock In A.~M. Bigatti, J.~Carette, J.~H. Davenport, M.~Joswig, and
  T.~de~Wolff, editors, {\em Mathematical Software -- ICMS 2020}, pages
  377--385, Cham, 2020. Springer International Publishing.

\bibitem{KapranovSturmfelsZelevinsky}
M.~M. Kapranov, B.~Sturmfels, and A.~V. Zelevinsky.
\newblock Quotients of toric varieties.
\newblock {\em Math. Ann.}, 290(4):643--655, 1991.

\bibitem{Kaveh}
K.~Kaveh.
\newblock Crystal bases and {N}ewton-{O}kounkov bodies.
\newblock {\em Duke Math. J.}, 164(13):2461--2506, 2015.

\bibitem{KeelTevelev2006}
S.~Keel and J.~Tevelev.
\newblock Geometry of {C}how quotients of {G}rassmannians.
\newblock {\em Duke Math. J.}, 134(2):259--311, 2006.

\bibitem{KoganMiller}
M.~Kogan and E.~Miller.
\newblock Toric degeneration of {S}chubert varieties and {G}elfand-{T}setlin
  polytopes.
\newblock {\em Adv. Math.}, 193(1):1--17, 2005.

\bibitem{LuxtonQu}
M.~Luxton and Z.~Qu.
\newblock Some results on tropical compactifications.
\newblock {\em Trans. Amer. Math. Soc.}, 363(9):4853--4876, 2011.

\bibitem{MaclaganSturmfels2015}
D.~Maclagan and B.~Sturmfels.
\newblock {\em Introduction to {T}ropical {G}eometry}, volume 161 of {\em
  Graduate Studies in Mathematics}.
\newblock American Mathematical Society, Providence, RI, 2015.

\bibitem{OSCAR}
Oscar -- open source computer algebra research system, version 0.8.3-dev, 2022.

\bibitem{Oxley}
J.~Oxley.
\newblock {\em Matroid theory}.
\newblock Oxford Science Publications. The Clarendon Press, Oxford University
  Press, New York, 1992.

\bibitem{RietschWilliams}
K.~Rietsch and L.~Williams.
\newblock Newton-{O}kounkov bodies, cluster duality, and mirror symmetry for
  {G}rassmannians.
\newblock {\em Duke Math. J.}, 168(18):3437--3527, 2019.

\bibitem{Schock}
N.~Schock.
\newblock Quasilinear tropical compactifications, 2021.
\newblock arXiv:2112.02062.

\bibitem{Speyer2008}
D.~Speyer.
\newblock Tropical linear spaces.
\newblock {\em SIAM J. Discrete Math.}, 22(4):1527--1558, 2008.

\bibitem{Sturmfels2002}
B.~Sturmfels.
\newblock {\em Solving systems of polynomial equations}, volume~97 of {\em CBMS
  Regional Conference Series in Mathematics}.
\newblock Published for the Conference Board of the Mathematical Sciences,
  Washington, DC; by the American Mathematical Society, Providence, RI, 2002.

\bibitem{Tevelev}
J.~Tevelev.
\newblock Compactifications of subvarieties of tori.
\newblock {\em Amer. J. Math.}, 129(4):1087--1104, 2007.

\bibitem{White}
N.~White.
\newblock {\em Theory of Matroids}.
\newblock Encyclopedia of Mathematics and its Applications. Cambridge
  University Press, 1986.

\end{thebibliography}
	\label{sec:biblio}

	\newpage
	
	\appendix
	
	\section{Summary of Notation}
	\label{app:notation}

{\footnotesize
\begin{table}[tbh!]
\begin{tabular}{rll}
   
   $[n]$ & $ = \{1,\ldots,n\}$ & p.\pageref{notn:setn} \\
   $\bone$ & the ``all ones vector'' & p.\pageref{notn:allone} \\
   $N$ &  $= \Z^n/\Zone$ & p.\pageref{notn:N} \\
   $M$ & $=\Hom(N,\Z)$ & p.\pageref{notn:M} \\
   $\bk{\su}{\sv}$ & The perfect pairing on $M\times N$ & p.\pageref{notn:uv}\\ 
   $\sQ$ & a finite set, frequently a matroid & p.\pageref{notn:Q} \\ 
   $(\sQ,\sa)$ & a point configuration & 
   p.\pageref{notn:pointConf}\\
   $\Delta(\sQ,\sa)$ & the polytope of $(\sQ,\sa)$ & 
   p.\pageref{notn:DeltaQ}\\
   $N(\sQ)$ &  $= \Z^{\sQ} / \Zone$ & p.\pageref{notn:NQ} \\
   $\omega$ & $ = (\sQ,\sa,\sw)$, a weighted point configuration & p.\pageref{notn:omega} \\
   $\sQ_{\sv}^{\sw}$ & a face in a coherent subdivision  & p.\pageref{notn:Qvw} \\
   $\sQ_{\sv}$ & $=\sQ_{\sv}^{0}$, a face of $(\sQ,\sa)$  & p.\pageref{notn:Qv} \\
   $\pQ(\omega)$ & the coherent subdivision associated to $\omega$ & p.\pageref{notn:subdOmega} \\
   $\vec{\sQ}$ & $ = (\sQ_1,\ldots,\sQ_s)$, frequently a flag matroid & p.\pageref{notn:vecQ} \\
   $\MS(\vec{\sQ},\vec{\sa})$ & The Minkowski sum of $(\vec{\sQ},\vec{\sa})$ & p.\pageref{notn:MinkowskiSum} \\
   $\vec{\omega}$ & $ = (\omega_1,\ldots,\omega_s) = (\vec{\sQ},\vec{\sa},\vec{\sw})$ & p.\pageref{notn:vecomega} \\
   $\pQ(\vecomega)$ & the coherent mixed subdivision associated to $\vecomega$ & p.\pageref{notn:subdVecOmega} \\
   $\pF(\vec{\sQ},\vec{\sa})$ & the fiber fan with its lineality space & p.\pageref{notn:fiberFan} \\
   $\se_{\lambda}$ & the image of the $\lambda$-th standard basis vector under $\Z^{\sQ}\to N(\sQ)$ & p.\pageref{notn:se} \\
   $M(\sQ)$ &  $= \Hom(N(\sQ),\Z)$ & p.\pageref{notn:MQ} \\
   $\se_{\lambda}^*$ & the dual of $\se_{\lambda}$ in $M(\sQ)$ & p.\pageref{notn:sedual}\\
   $\binom{[n]}{r}$ & the $r$-element subsets of $[n]$  & p.\pageref{notn:binomnr} \\
   $\pB(\vec{\sQ})$ & $=\bigcup_{k=1}^{s} \sQ_k$  & p.\pageref{notn:basesFlagMatroid} \\
   $\epsilon_1,\ldots,\epsilon_n$ & the image under $\Z^{n} \to N$ of the standard basis vectors & p.\pageref{notn:epsilon} \\
   $\epsilon_i^{*}$ & the dual of the $\epsilon_i$ in $M$ & p.\pageref{notn:epsilondual} \\
   $\epsilon_{\lambda}$ & $ = \epsilon_{i_1} + \cdots + \epsilon_{i_r}$ where $\lambda = \{i_1,\ldots,i_r\}$ & p.\pageref{notn:epsilonlambda} \\
   $\Dr(\vec{\sQ})$ & the flag Dressian & p.\pageref{notn:flagDr} \\
   $\Pi_n$ & the $(n-1)$--dimensional permutahedron & p.\pageref{notn:permutahedron} \\
   $T(\sQ)$ & the torus with cocharacter lattice $N(\sQ)$ & p.\pageref{notn:TQ} \\
   $\Fl(\vec{\sQ})$ & flag matroid stratum & p.\pageref{notn:FlvecQ} \\
   $\vec{\sP} \leq \vec{\sQ}$ & partial order induced by inclusion of faces & p.\pageref{notn:faceOrder} \\
   $\Fl(\vecomega)$ & an inverse limit of flag matroid strata & p.\pageref{notn:Flomega} \\
   $\Fl(n)$ & $=\Fl((1,2,\ldots,n-1),n)$, the complete flag variety & p.\pageref{notn:completeFlag} \\
   $\Sn{n}$ & the symmetric group on $[n]$ & p.\pageref{notn:Sn} \\
  \end{tabular}
\end{table}
}

	\section{Complete flag matroids on $[4]$ and matroidal subdivisions of $\Pi_4$}	
	\label{app:data}
	
	In this Appendix, we record the data used in \S\S\ref{sec:completeFlag}-\ref{sec:inverseLimitsCompleteFlag}.  In Table \ref{table:Flag123-4} we record $(\Sn{2}\times \Sn{4})$---orbit representatives of the flag matroids $\vec{\sQ}$ such that $\dim \Delta(\vec{\sQ}) = 3$, the maximum possible, along with its coordinate ring and internal flag matroids $\sQ' \leq \sQ$. Observe that for each representative $\sQ$, the set $\pB(\vec{\sQ})$ has the standard flag from Formula \ref{eq:standardFlag}, thus the coordinate ring $R_{\vec{\sQ}}^{x}$ is computed exactly as in \S \ref{sec:affineCoordinates}.  See Example \ref{ex:forClosedImmersion} for a sample computation. 
	
	Tables \ref{tab:Subdivisions-coarsest}, \ref{tab:Subdivisions-middle}, and \ref{tab:Subdivisions-finest} record all adjacency graphs of flag subdivisions  of the $n=4$ permutahedron, up to $(\Sn{2} \times \Sn{4})$---symmetry. The ordering is consistent with Table \ref{tab:cones123-4}. The flag matroids appearing are listed in the right-most column, and the number in parenthesis indicates which $(\Sn{2}\times \Sn{4})$--orbit from Table \ref{table:Flag123-4} the flag matroid belongs to.
	
		\newpage
	
	\begin{landscape}
	{\footnotesize
	\begin{table}[tbh!]
	\centering
	\begin{tabular}{|c|c|c|c|c|l|c|c|}
	\hline
	& \multirow{2}{3.1cm}{\centering \textbf{Flag matroids}  $\vec{\sQ}$ \textbf{(nonbases)}}  & \multirow{2}{65pt}{\centering $B_{\vec{\sQ}}^{x} / I_{\vec{\sQ}}^{x}$}  & \multirow{2}{153pt}{\centering $S_{\vec{\sQ}}^{x}$ \textbf{generators}} & \multirow{2}{22pt}{\centering $\dim$  $\Fl(\vec{\sQ})$} & \multirow{2}{4.1cm}{\centering \textbf{Flag matroids} $\vec{\sP} \leq \vec{\sQ}$ \\ \textbf{internal facets (nonbases)} }   & \multirow{2}{2.5cm}{\centering $\lambda \subset [4]$: $\vec{\sP}\cong $ \\ $\vec{\sQ}|{\lambda} \times \vec{\sQ}/\lambda$} 
	& \multirow{2}{1cm}{\centering $\dim$ \\ $\Fl(\vec{\sP})$} \\
	&  & & & &  &  & \\
	\hline
	\hline
	\textbf{1} & \multirow{4}{3.1cm}{\centering $\{2,4,13,24\}$}  & \multirow{4}{65pt}{\centering $\C[v,y,z]$}  & \multirow{4}{153pt}{\centering $v,y,z$} & \multirow{4}{22pt}{\centering $3$} & (a): $\{2,4,13,14,24,34,134\}$ & $2$ & $2$ \\
	& & & & &  (b): $\{2,4,12,13,23,24,123\}$ & $4$  & $2$ \\
	& & & & & (c): $\{2,4,13,24,123,134\}$ & $13$ &  $2$ \\
	& & & & & (d): $\{2,4,13,24,124,234\}$ & $24$ & $2$	\\
	\hline
	\textbf{2} &  \multirow{3}{3.1cm}{\centering $\{3,4,34,124\}$}  & \multirow{3}{65pt}{\centering $\C[u,x,y]$} & \multirow{3}{153pt}{\centering $u,x,y$} & \multirow{3}{22pt}{\centering $3$} & (a): $\{3,4,12,14,24,34,124\}$ & $3$ & $2$ \\
	& & & & & (b): $\{3,4,12,34,123,124\}$ & $34$ & $2$ \\
	& & & & & (c): $\{3,4,13,23,34,124\}$ & $124$ & $2$ \\
	\hline
	\textbf{3} &  \multirow{2}{3.1cm}{\centering $\{3,4,13,23,34\}$}   & \multirow{2}{65pt}{\centering $\C[u,y,z]$} & \multirow{2}{153pt}{\centering $u,y,z$} & \multirow{2}{22pt}{\centering $3$} & (a): $\{3,4,13,23,34,124\}$ & $3$ & $2$ \\
	& & & & & (b): $\{3,4,12,13,23,34,123\}$ & $4$  & $2$ \\
	\hline
	\textbf{4} & \multirow{2}{3.1cm}{\centering $\{3,13,14,23,34,134\}$}  & \multirow{2}{65pt}{\centering $\C[u,w,z]$} & \multirow{2}{153pt}{\centering $u,w,z$} & \multirow{2}{22pt}{\centering $3$} & (a):  $\{3,23,14,13,34,124,134\}$ & $3$ & $2$ \\
	& & & & & (b): $\{2,3,23,14,13,34,134\}$ & $134$ & $2$ \\
	\hline
	\textbf{5} & $\{2,4,24,124,234\}$ & $\C[v,x,y]$ & $v,x,y$ & $3$ & (a): $ \{2,4,13,24,124,234\}$ & $24$  & $2$ \\
	\hline
	\textbf{6} & \multirow{3}{3.1cm}{\centering $\{2,13,124\}$}   &  \multirow{3}{65pt}{\centering $\C[v,w,y]$} & \multirow{3}{153pt}{\centering $v,w,y$} & \multirow{3}{22pt}{\centering $3$} & (a): $\{2,13,14,34,124,134\}$ & $2$ & $2$ \\
	& & & & & (b):  $\{2,4,13,24,124,234\}$ & $13$ & $2$ \\
	& & & & & (c): $\{2,3,13,23,34,124\}$ & $124$ & $2$ \\
	\hline
	\textbf{7} & \multirow{3}{3.1cm}{\centering $\{2,3,23\}$} &  \multirow{3}{65pt}{\centering $\C[w,x,y,z]$} & \multirow{3}{153pt}{\centering $w,x,y,z,xz-y$} & \multirow{3}{22pt}{\centering $4$} & (a): $\{2,3,13,14,23,34,134\}$ & $2$ & $2$ \\
	& & & & & (b): $\{2,3,12,14,23,24,124\}$ & $3$ & $2$ \\
	& & & & & (c): $\{2,3,14,124,134\}$ & $23$ & $2$ \\
	\hline
	8 & $\{3,13,23,34\}$  &  $\C[u,w,y,z]$ & $u,w,y,z,uy-w$ & $4$ & (a): $\{3,13,23,34,124\}$ & $3$ & $2$ \\
	\hline
	9 & \multirow{2}{3.1cm}{\centering  $\{3,124\}$}  &  \multirow{2}{65pt}{\centering $\C[u,w,x,y]$} & \multirow{2}{153pt}{\centering $u,w,x,y, uy-w$} & \multirow{2}{22pt}{\centering $4$} & (a): $\{3,12,14,24,124\}$ & $3$ & $3$ \\
	& & & & & (b): $\{3,13,23,34,124\}$ & $124$ & $3$ \\
	\hline
	\textbf{10} & \multirow{2}{3.1cm}{\centering  $\{2,124\}$}  &  \multirow{2}{65pt}{\centering $\C[v,w,x,y]$} & \multirow{2}{153pt}{\centering $v,w,x,y,vy-wx$} & \multirow{2}{22pt}{\centering $4$} & (a): $\{2,13,14,34,124,134\}$ & $2$ & $2$ \\
	& & & & & (b): $\{2,3,13,23,34,124\}$ & $124$ & $2$ \\
	\hline
	\textbf{11} & \multirow{2}{3.1cm}{\centering $\{2,13\}$ } &  \multirow{2}{65pt}{\centering $\C[v,x,y,z]$} & \multirow{2}{153pt}{\centering $v,w,y,z,vz-w$} & \multirow{2}{22pt}{\centering $4$} & (a): $\{2,13,14,34,134\}$ & $2$ & $3$ \\
	& & & & & (b): $\{2,4,13,24,124,234\}$ & $13$ & $2$ \\
	\hline
	\textbf{12} & \multirow{2}{3.1cm}{\centering $\{13,24\}$ }  &  \multirow{2}{65pt}{\centering $\frac{\C[u,v,w,x,z]}{\langle uy-w \rangle}$} & \multirow{2}{153pt}{\centering  $u,v,w,x,z$} & \multirow{2}{22pt}{\centering $4$} & (a): $\{2,4,13,24,124,234\}$ & $13$ & $2$ \\
	& & & & & (b): $\{1,3,13,24,123,134\}$ & $24$ & $2$ \\
	\hline
	\textbf{13} &  \multirow{2}{3.1cm}{\centering $\{2\}$ }  & \multirow{2}{65pt}{\centering $\C[v,w,x,y,z]$} & \multirow{2}{153pt}{\centering $v$, $w$, $x$, $y$, $z$, $vy-wx$, $xz-y$, $vz-w$} & \multirow{2}{22pt}{\centering $5$} & \multirow{2}{125pt}{(a): $\{2,13,14,34,134\}$} & \multirow{2}{32pt}{\centering $2$} & \multirow{2}{23pt}{\centering $3$} \\
	& & & & & & & \\
	\hline
	\textbf{14} & \multirow{2}{3.1cm}{\centering $\{13\}$ } &  \multirow{2}{65pt}{\centering $\C[u,v,w,y,z]$} & \multirow{2}{153pt}{\centering $u$, $v$, $w$, $y$, $z$, $uy-w$, $w-vz-uy$ } & \multirow{2}{22pt}{\centering $5$} & \multirow{2}{125pt}{(a): $\{2,4,13,24,124,234\}$} & \multirow{2}{32pt}{\centering $13$} & \multirow{2}{23pt}{\centering  $2$} \\ 
	& & & & & & &  \\
	\hline
	\textbf{15} & \multirow{2}{3.1cm}{\centering $\emptyset$ } & \multirow{2}{65pt}{\centering $\C[u,v,w,x,y,z]$} & \multirow{2}{153pt}{\centering $u$, $v$, $x$, $w$, $y$, $z$, $ux-v$, $uy-w$, $vy-wx$, $xz-y$, $uxz-uy-vz+w$} & \multirow{2}{22pt}{\centering $6$} & & \multirow{2}{32pt}{\centering N/A} & \multirow{2}{23pt}{\centering N/A}  \\
	& & & & & & & \\
	\hline
    \end{tabular}
    \caption{Flag matroid strata of $\Fl(4)$}
    \label{table:Flag123-4}
	\end{table}
	}
	\end{landscape}

	\newpage

	\begin{table*}[h]
		\centering
		\begin{tabular}{ |c|l|l| }
			\hline
			& \textbf{Adjacency graph} & \textbf{Bases of the flag matroids (orbit)}\\
			\hline 
			\multirow{2}{3mm}{\centering $\mathbf{1}$} & \multirow{2}{5cm}{\centering \includegraphics[width=17mm]{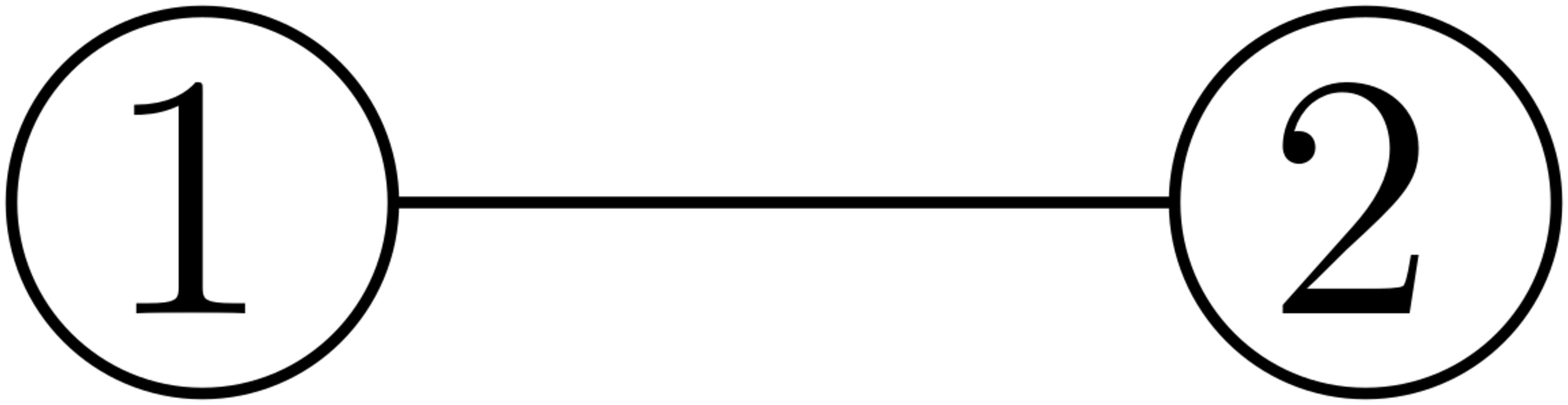}}& $\vec{\sQ}_1 :  \{1,2,3,4,12,13,14,123,124,134 \}$  (8) \\ 
			& & $\vec{\sQ}_2:\{2,3,4,12,13,14,23,24,34,123,124,134,234 \}$ (13)  \\ 
			\hline 
			\multirow{2}{3mm}{\centering $\mathbf{2}$} & \multirow{2}{5cm}{\centering \includegraphics[width=17mm]{G8.eps}}& $\vec{\sQ}_1 :  \{1,2,12,13,14,23,24,123,124\}$ (5) \\ 
			& &$\vec{\sQ}_2:\{1,2,3,4,13,14,23,24,34,123,124,134,234\}$ (14) \\ 
			\hline
			 \multirow{4}{3mm}{\centering $\mathbf{3}$}& \multirow{4}{5cm}{\centering \includegraphics[width=18mm]{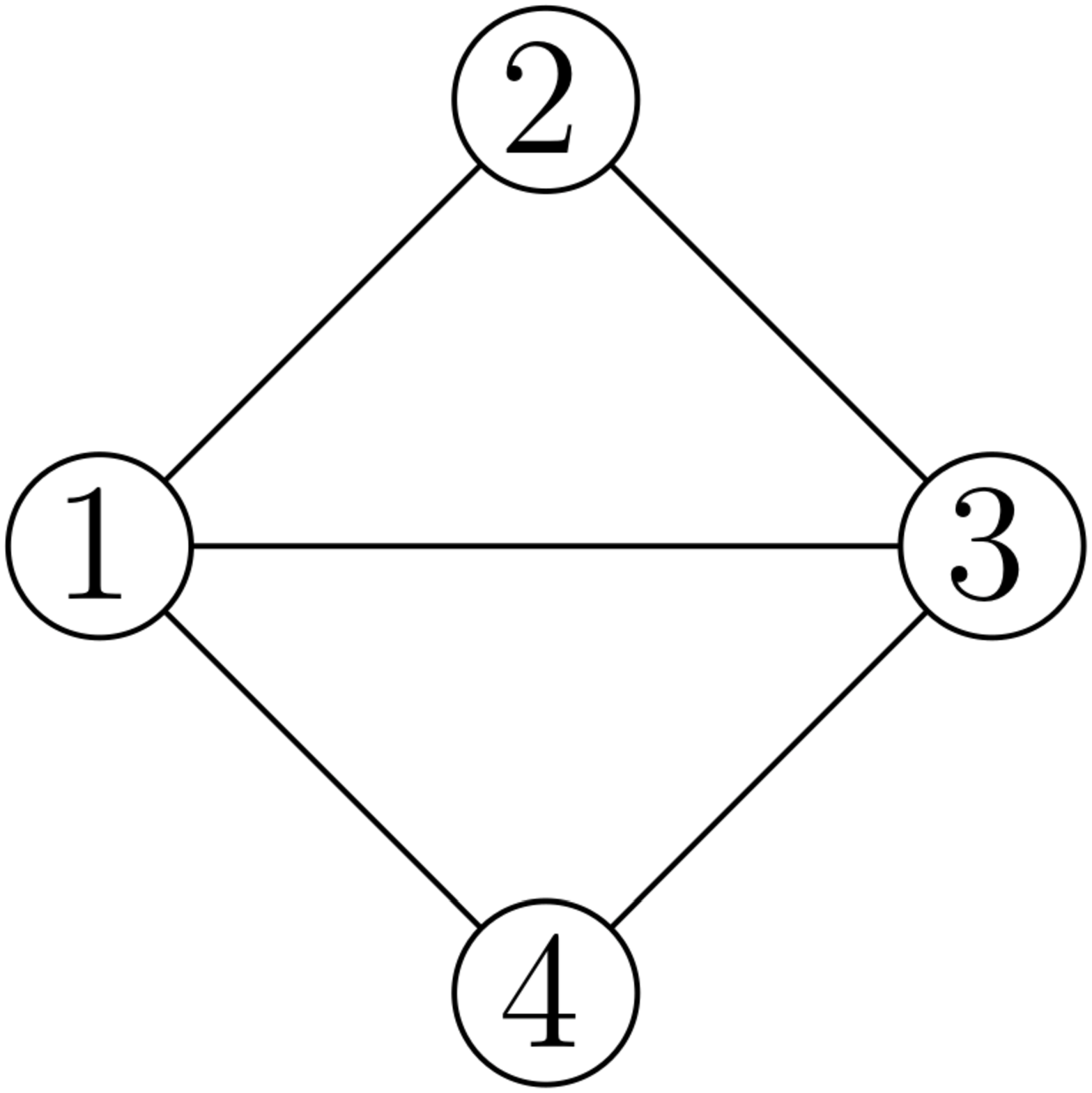}}& $\vec{\sQ}_1 :  \{1,2,3,4,12,13,14,23,24,123,124\}$ (7) \\ 
			& &$\vec{\sQ}_2:\{2,3,4,23,24,123,124,234\}$ (4) \\ 
			& &$\vec{\sQ}_3:\{3,4,13,14,23,24,34,123,124,134,234\}$ (7) \\ 
			& &$\vec{\sQ}_4:\{1,3,4,13,14,123,124,134\}$ (4) \\ 
			\hline
		\end{tabular}
		\caption{Coarsest matroidal subdivisions of $\Pi_4$}
		\label{tab:Subdivisions-coarsest}
	\end{table*}

	\begin{table*}[tbh]
		\centering
		\begin{tabular}{ |c|l|l| }
			\hline
			& \textbf{Adjacency graph} & \textbf{Bases of the flag matroids (orbit)}\\
			\hline 
			\multirow{3}{3mm}{\centering $\mathbf{4}$} & \multirow{3}{5cm}{\centering \includegraphics[width=26mm]{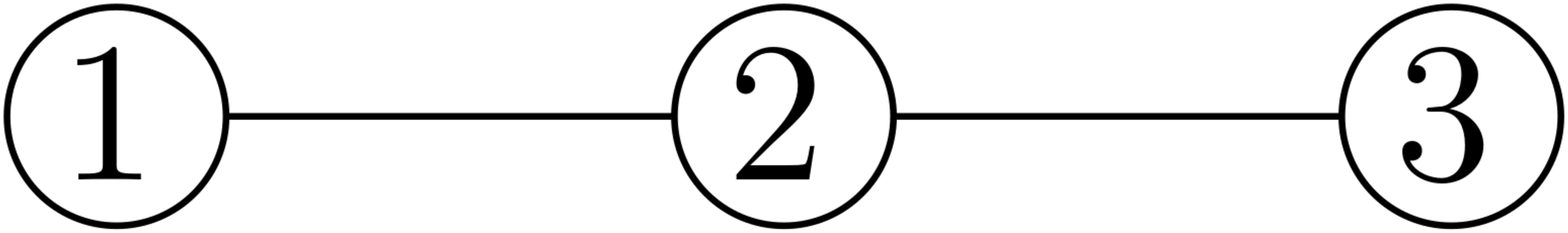}}& $\vec{\sQ}_1 :  \{1,2,3,4,12,13,14,123,124,134\}$ (8) \\ 
			& &$\vec{\sQ}_2:\{2,3,4,12,13,14,24,34,123,124,134,234\}$ (11) \\ 
			& &$\vec{\sQ}_3:\{2,3,12,13,23,24,34,123,234\}$ (5) \\ 			
			\hline
			\multirow{4}{3mm}{\centering $\mathbf{5}$} & \multirow{4}{5cm}{\centering \includegraphics[width=18mm]{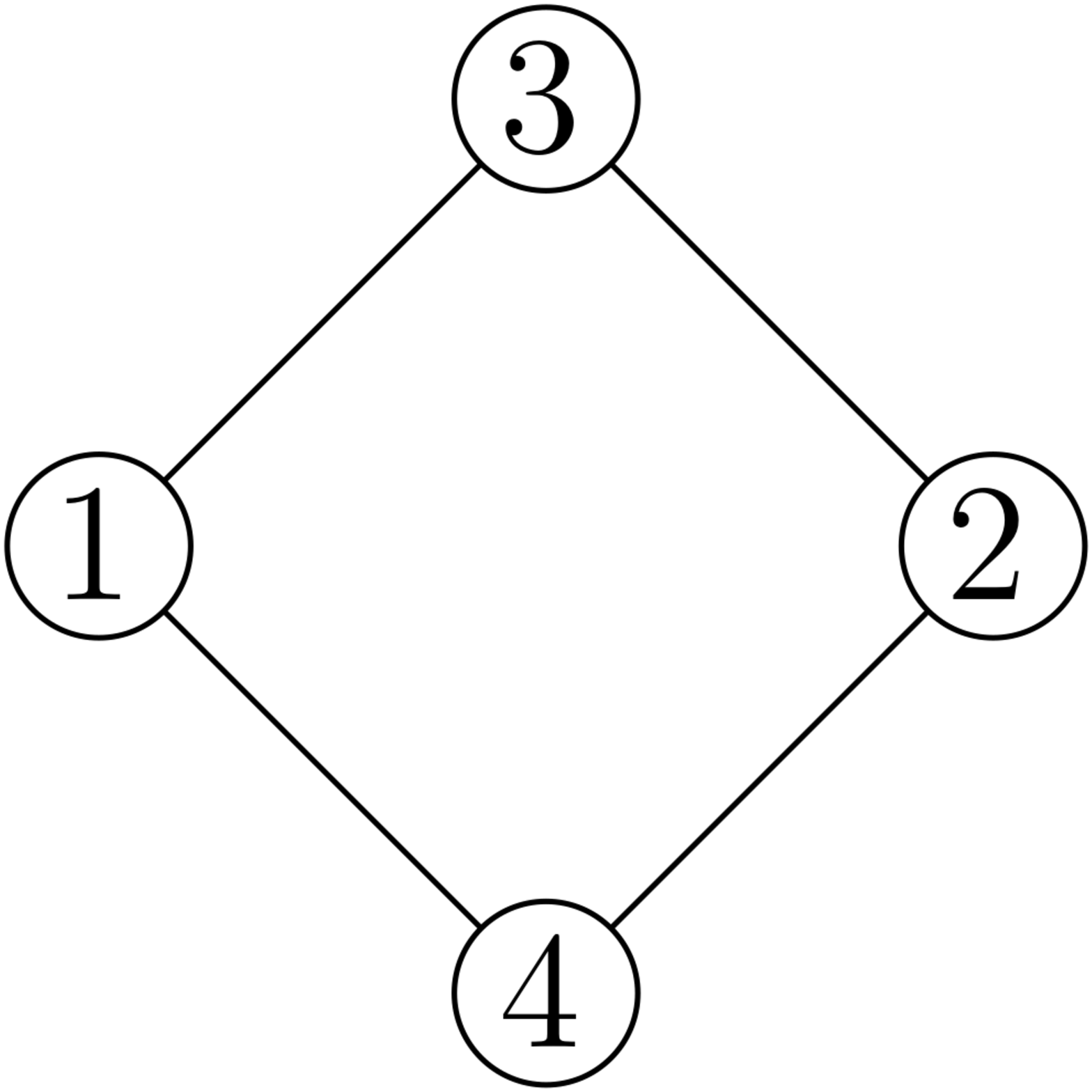}}& $\vec{\sQ}_1 :  \{1,2,3,12,13,123,124,134\}$ (4) \\ 
			& &$\vec{\sQ}_2:\{2,3,4,12,13,14,23,24,34,124,134,234\}$ (13) \\  
			& &$\vec{\sQ}_3:\{2,3,12,13,23,123,124,134,234\}$ (3) \\ 
			& &$\vec{\sQ}_4:\{1,2,3,4,12,13,14,124,134\}$ (3) \\ 
			\hline
			\multirow{3}{3mm}{\centering $\mathbf{6}$} & \multirow{3}{5cm}{\centering \includegraphics[width=26mm]{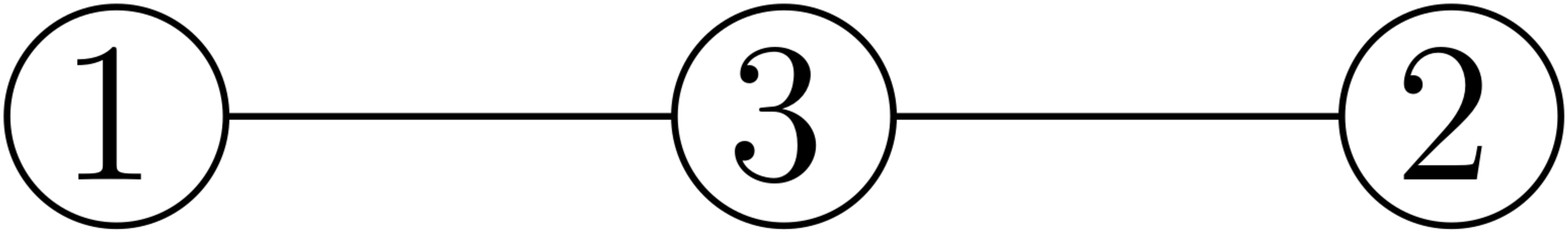}}& $\vec{\sQ}_1 :  \{1,2,3,4,12,13,14,123,124,134\}$ (8) \\ 
			 & &$\vec{\sQ}_2:\{2,3,4,23,24,34,123,124,134,234\}$ (8) \\ 
			& &$\vec{\sQ}_3:\{2,3,4,12,13,14,23,24,34,123,124,134\}$ (9) \\ 
			\hline
			\multirow{5}{3mm}{\centering $\mathbf{7}$} & \multirow{5}{5cm}{\centering \includegraphics[width=26mm]{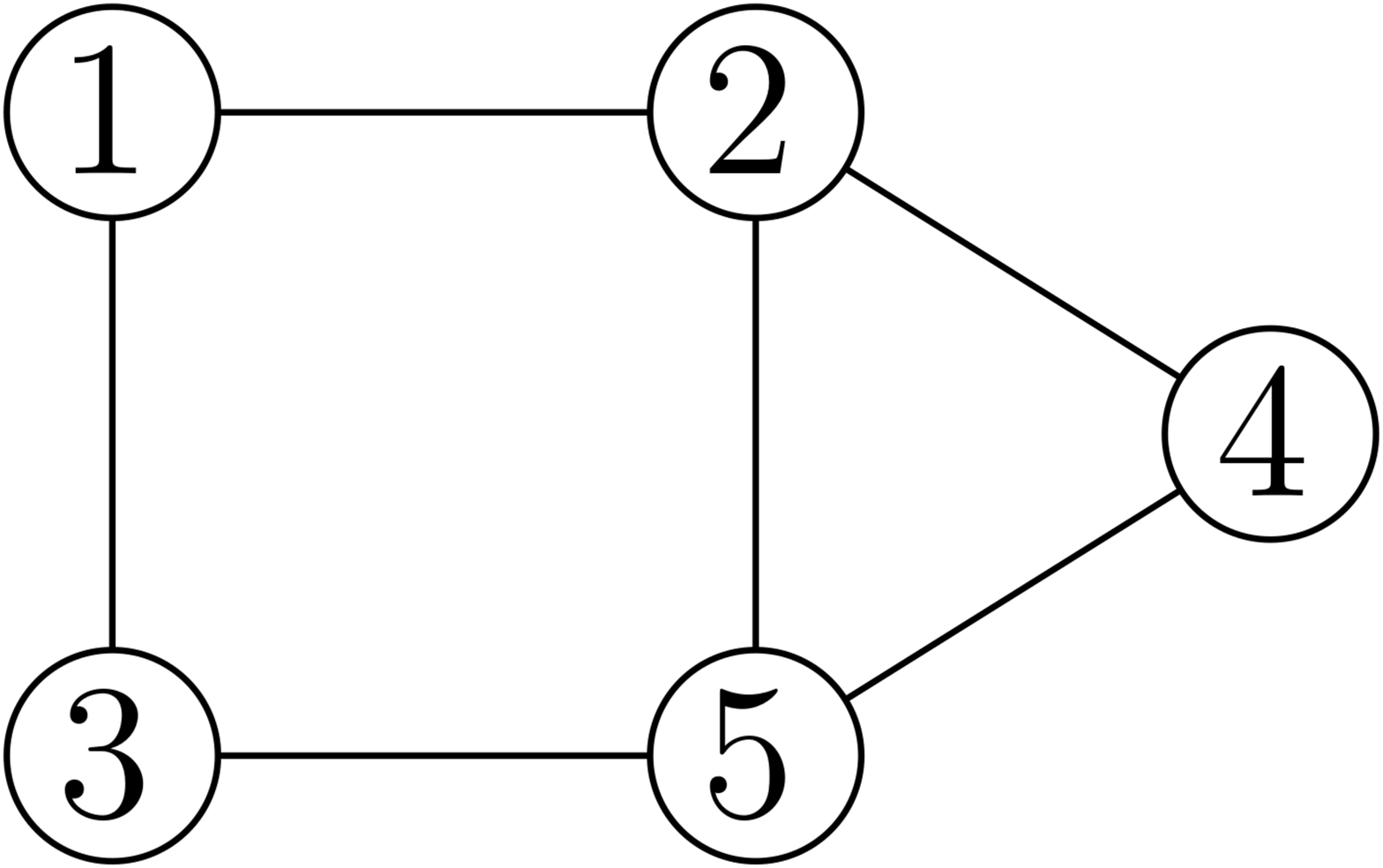}}& $\vec{\sQ}_1 :  \{1,2,3,4,12,13,14,123,124\}$ (3) \\ 
			& &$\vec{\sQ}_2:\{2,3,4,23,24,123,124,234\}$ (4) \\ 
			& &$\vec{\sQ}_3:\{3,4,13,14,23,24,34,123,124,134,234\}$ (7) \\ 
			& &$\vec{\sQ}_4:\{2,3,4,12,13,14,23,24,123,124\}$ (2) \\ 
			& &$\vec{\sQ}_5:\{1,3,4,13,14,123,124,134\}$ (4) \\ 
			\hline
			\multirow{3}{3mm}{\centering $\mathbf{8}$} & \multirow{3}{5cm}{\centering \includegraphics[width=26mm]{G12.eps}}& $\vec{\sQ}_1 : \{1,2,12,13,14,23,24,123,124\}$ (5) \\ 
			& &$\vec{\sQ}_2:\{3,4,13,14,23,24,34,134,234\}$ (5) \\ 
			& &$\vec{\sQ}_3:\{1,2,3,4,13,14,23,24,123,124,134,234\}$ (12) \\ 
			\hline
			\multirow{5}{3mm}{\centering $\mathbf{9}$} & \multirow{5}{5cm}{\centering \includegraphics[width=30mm]{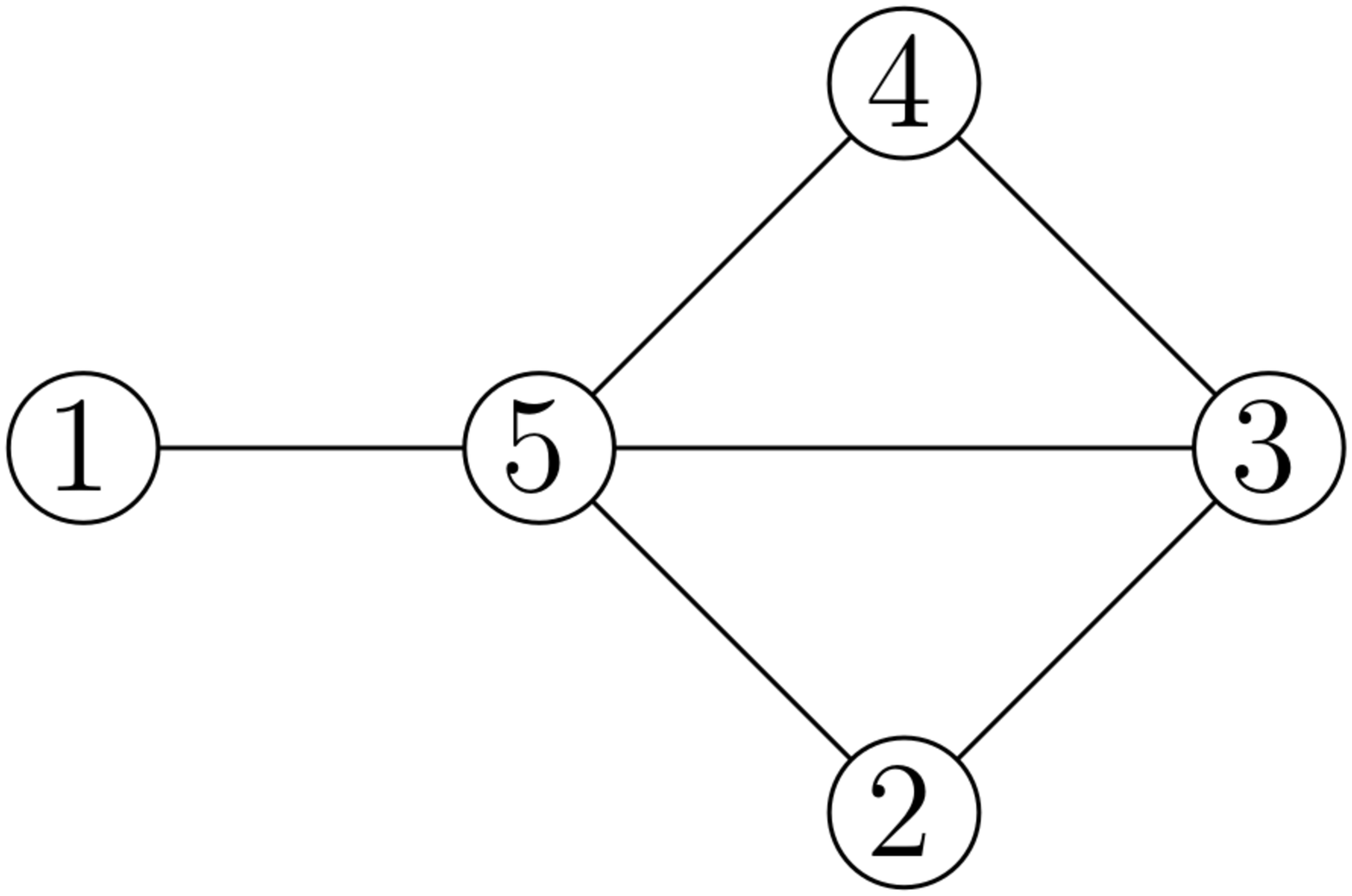}}& $\vec{\sQ}_1 : \{1,2,12,13,14,23,24,123,124\}$ (5) \\ 
			& &$\vec{\sQ}_2:\{1,2,3,13,23,123,134,234\}$ (4) \\ 
			& &$\vec{\sQ}_3:\{1,2,3,4,13,14,23,24,34,134,234\}$ (7) \\ 
			& &$\vec{\sQ}_4:\{1,2,4,14,24,124,134,234\}$ (4) \\ 
			& &$\vec{\sQ}_5:\{1,2,13,14,23,24,123,124,134,234\}$ (1) \\ 
			\hline
		\end{tabular}
		\caption{Intermediate matroidal subdivisions of $\Pi_4$}
		\label{tab:Subdivisions-middle}
	\end{table*}

	\begin{table*}[h]
		\centering
		\begin{tabular}{ |c|l|l| }
			\hline
			&\textbf{Adjacency graph} & \textbf{Bases of the flag matroids (orbit)}\\
			\hline
			\multirow{5}{4mm}{\centering $\mathbf{10}$} &\multirow{5}{5cm}{\centering \includegraphics[width=36mm]{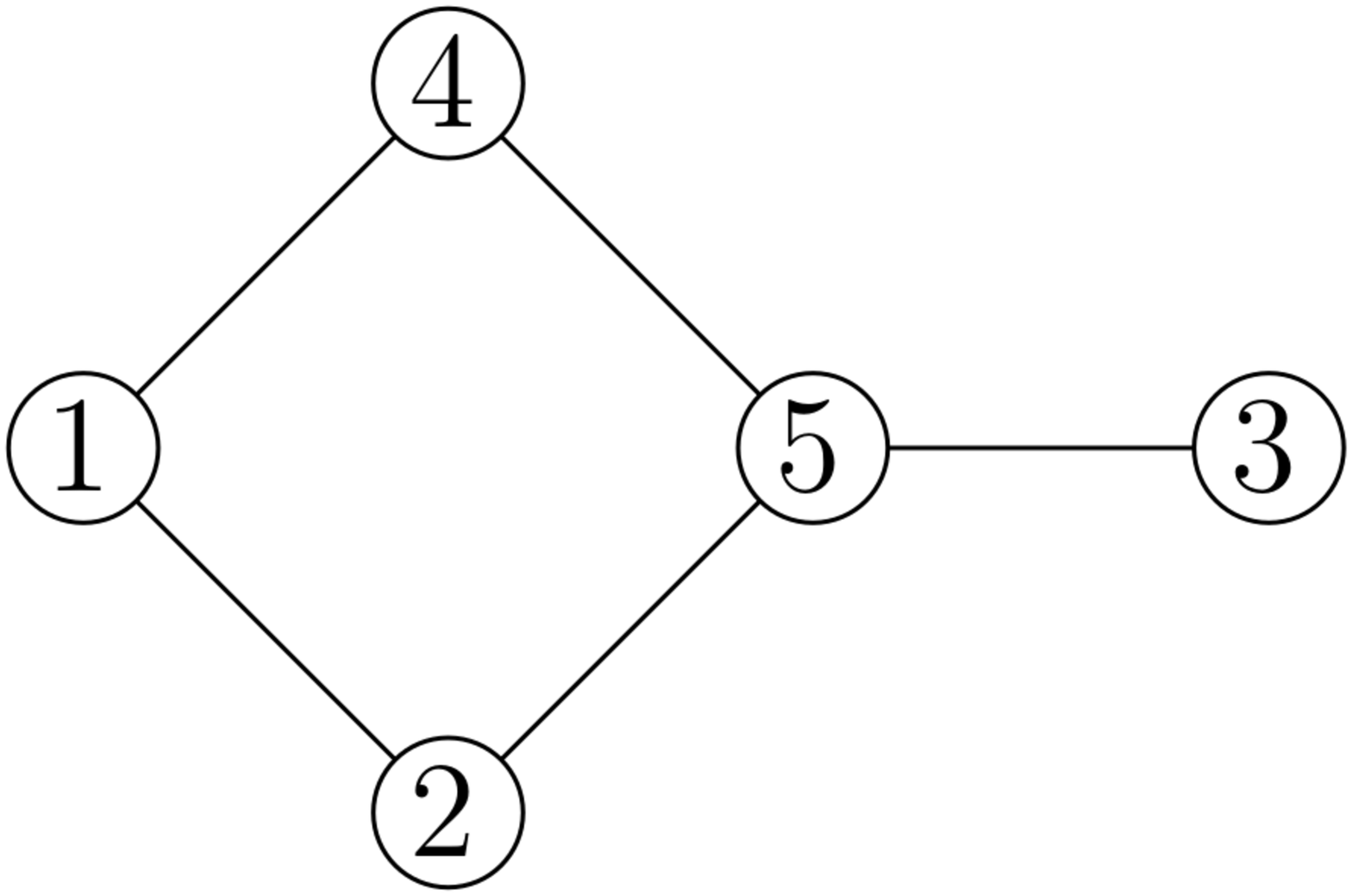}}& $\vec{\sQ}_1 : \{1,2,4,12,14,123,124,134\}$ (4) \\ 
			& &$\vec{\sQ}_2:\{1,2,3,4,12,13,14,123,134\}$ (3) \\ 
			& &$\vec{\sQ}_3:\{2,3,12,13,23,24,34,123,234\}$ (5) \\ 
			& &$\vec{\sQ}_4:\{2,4,12,14,24,123,124,134,234\}$ (3) \\
			& &$\vec{\sQ}_5:\{2,3,4,12,13,14,24,34,123,134,234\}$ (6) \\ 
			\hline
			\multirow{6}{4mm}{\centering $\mathbf{11}$} & \multirow{6}{5cm}{\centering \includegraphics[width=28mm]{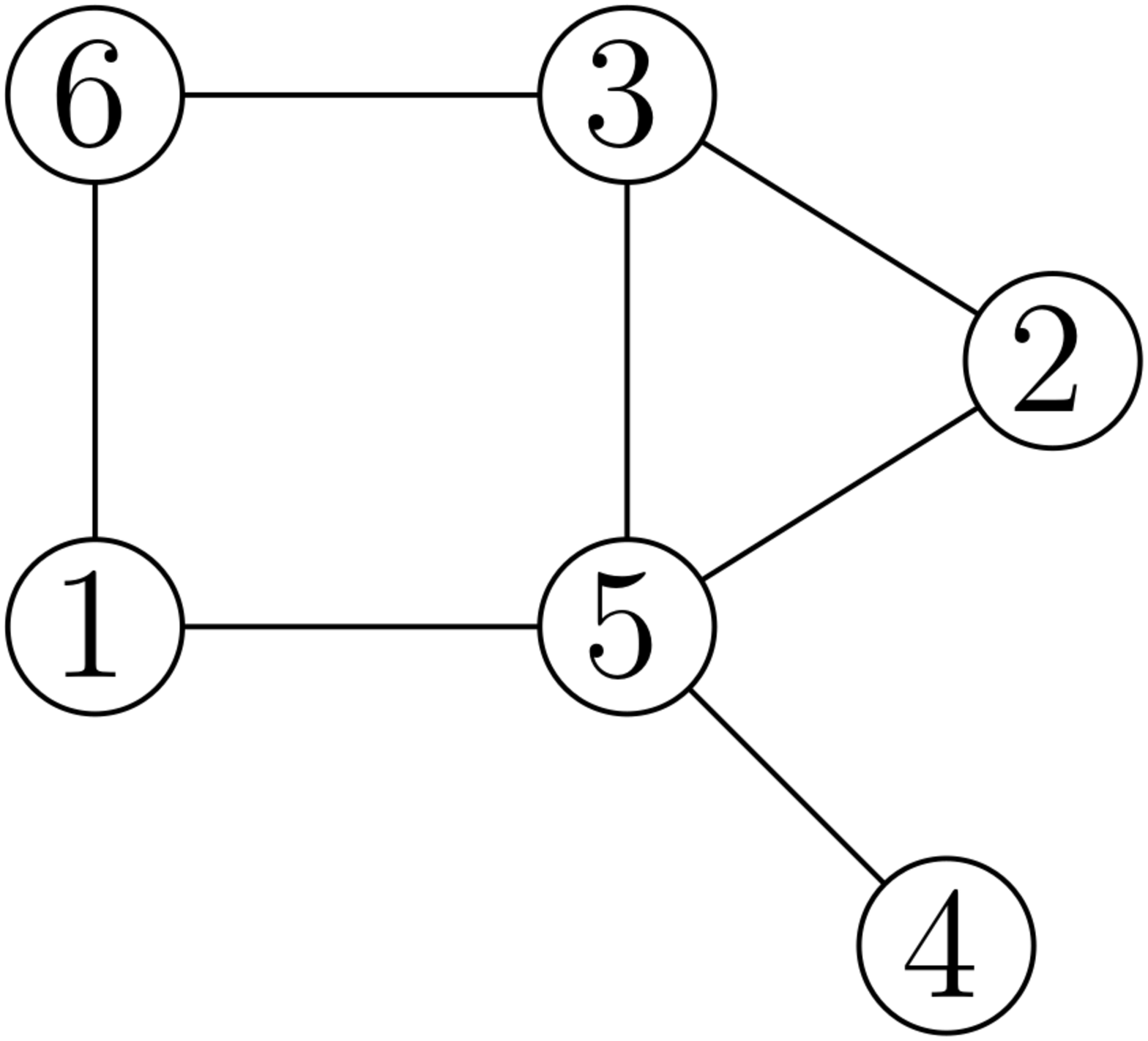}}& $\vec{\sQ}_1:\{1,2,3,12,13,123,124,134\}$ (4) \\
			& &$\vec{\sQ}_2:\{2,3,4,24,34,124,134,234\}$ (4) \\
			& &$\vec{\sQ}_3:\{2,3,4,12,13,14,24,34,124,134\}$ (2) \\
			& &$\vec{\sQ}_4:\{2,3,12,13,23,24,34,123,234\}$ (5) \\
			& &$\vec{\sQ}_5:\{2,3,12,13,24,34,123,124,134,234\}$ (1) \\
			& &$\vec{\sQ}_6:\{1,2,3,4,12,13,14,124,134\}$ (3) \\
			\hline
			\multirow{6}{4mm}{\centering $\mathbf{12}$} & \multirow{6}{5cm}{\centering \includegraphics[width=34mm]{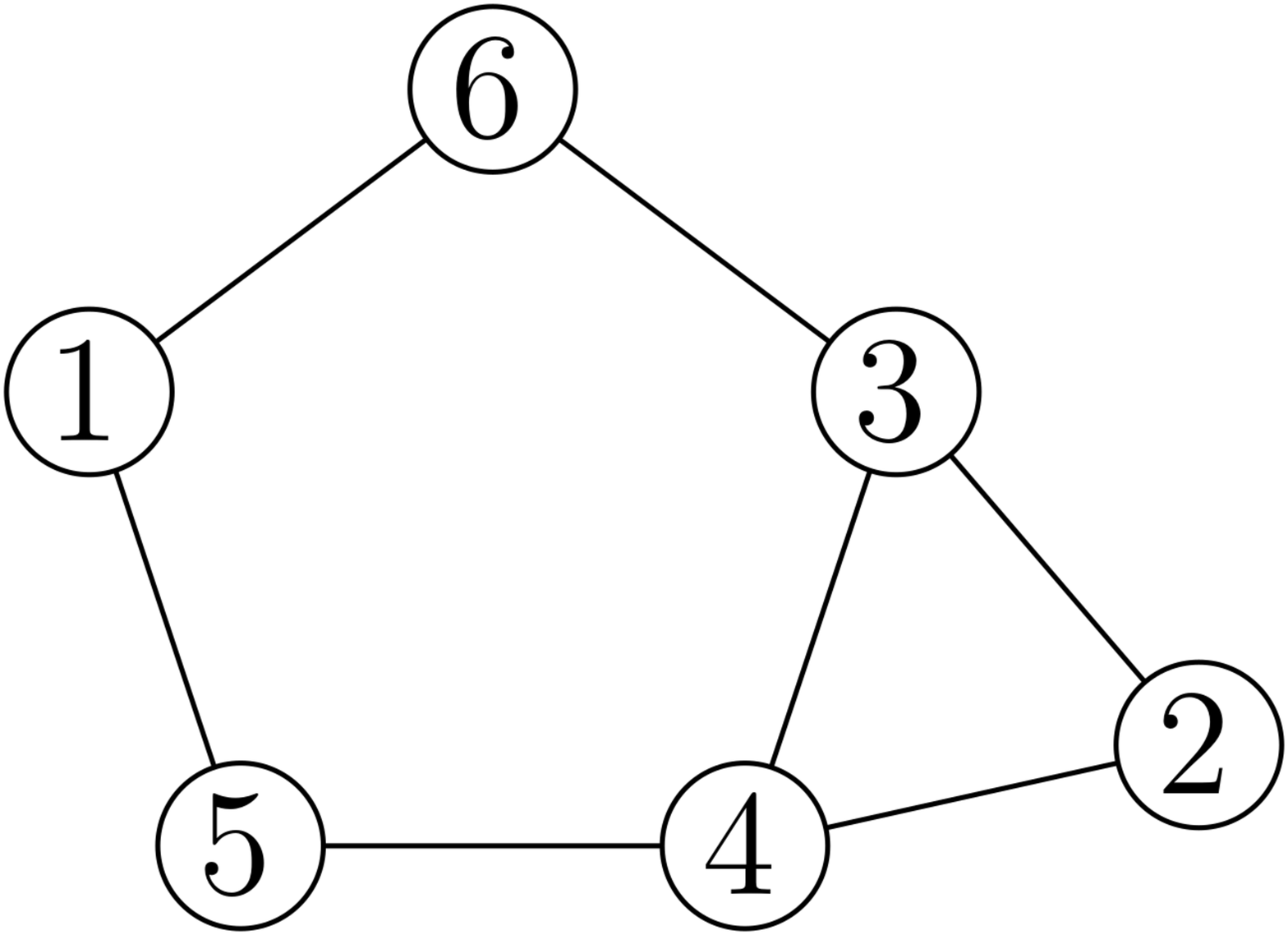}}& $\vec{\sQ}_1 : \{1,2,3,12,13,123,124,134\}$  (4) \\
			& &$\vec{\sQ}_2:\{2,3,4,24,34,124,134,234\}$ (4) \\
			& &$\vec{\sQ}_3:\{2,3,4,12,13,14,24,34,124,134\}$ (2) \\
			& &$\vec{\sQ}_4:\{2,3,12,13,23,24,34,124,134,234\}$ (2) \\
			& &$\vec{\sQ}_5:\{2,3,12,13,23,123,124,134,234\}$ (3) \\
			& &$\vec{\sQ}_6:\{1,2,3,4,12,13,14,124,134\}$ (3) \\
			\hline
			\multirow{6}{4mm}{\centering $\mathbf{13}$} & \multirow{6}{5cm}{\centering \includegraphics[width=32mm]{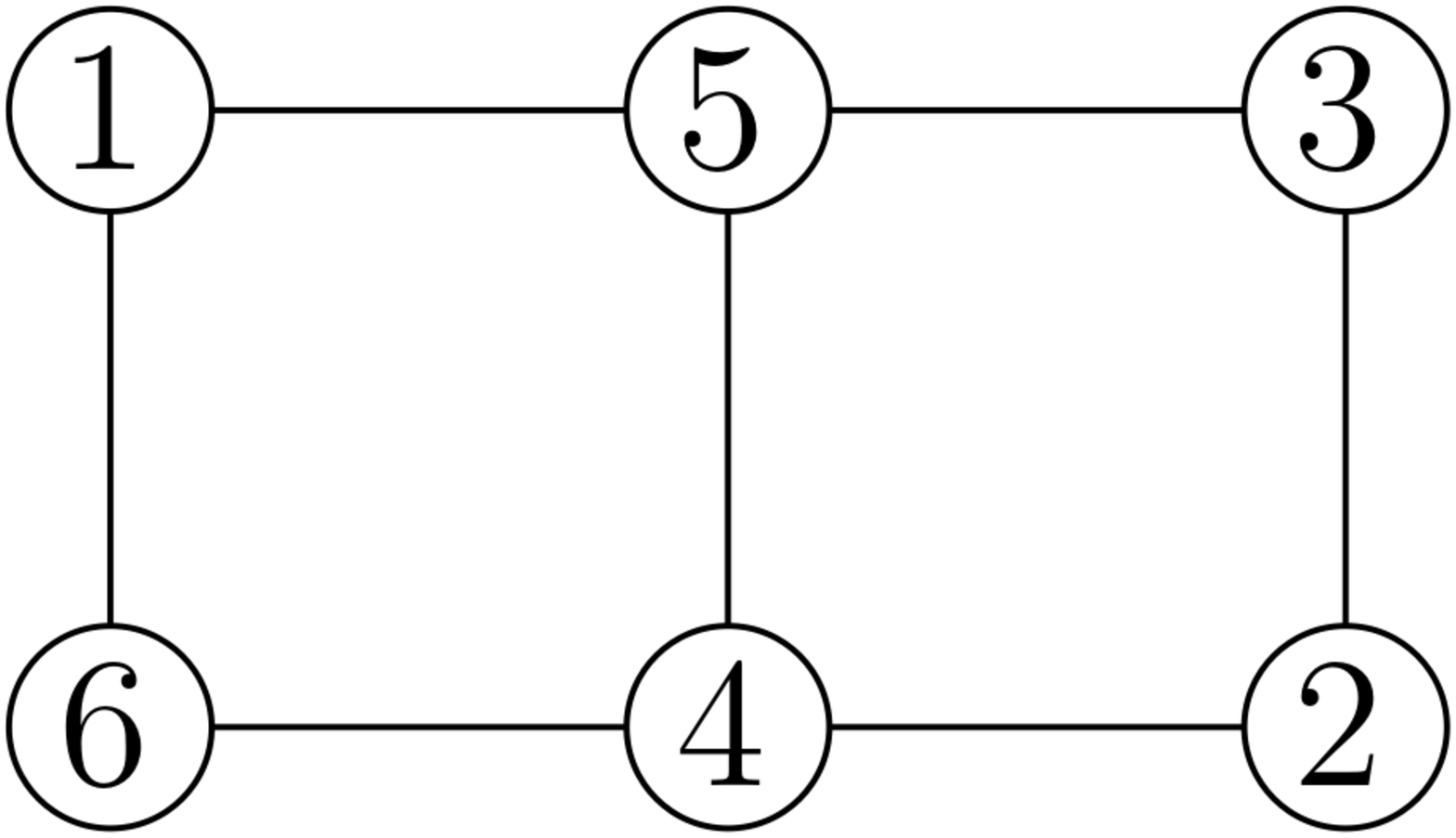}}& $\vec{\sQ}_1 : \{1,2,3,4,12,13,14,123,124\}$ (3) \\
			& &$\vec{\sQ}_2:\{3,4,23,24,34,123,124,134,234\}$ (3) \\
			& &$\vec{\sQ}_3:\{2,3,4,23,24,123,124,234\}$ (4) \\
			& &$\vec{\sQ}_4:\{3,4,13,14,23,24,34,123,124,134\}$ (2) \\
			& &$\vec{\sQ}_5:\{2,3,4,12,13,14,23,24,123,124\}$ (2)  \\
			& &$\vec{\sQ}_6:\{1,3,4,13,14,123,124,134\}$ (4) \\
			\hline
			\multirow{6}{4mm}{\centering $\mathbf{14}$} & \multirow{6}{5cm}{\centering \includegraphics[width=48mm]{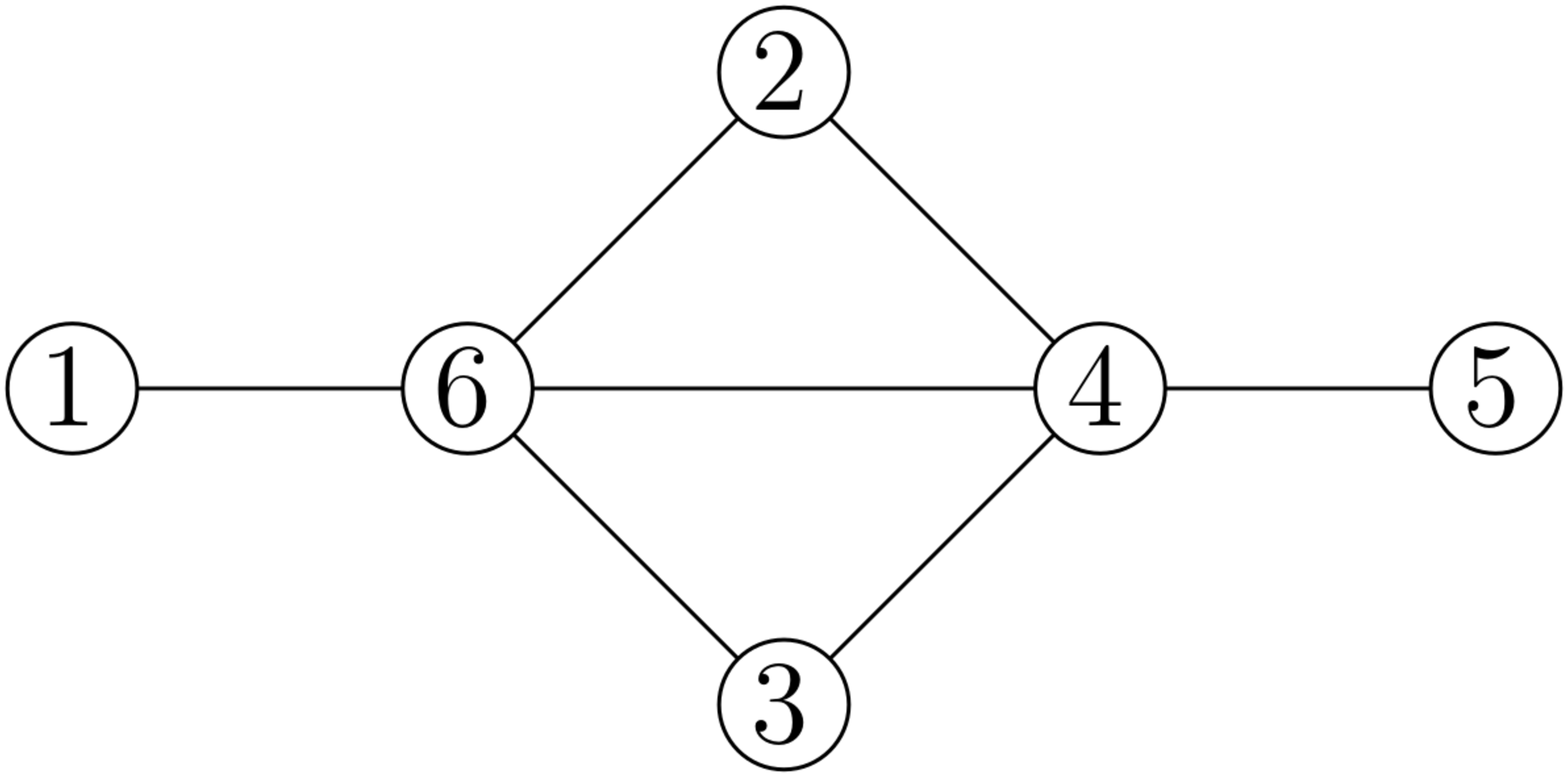}}&  $\vec{\sQ}_1 : \{1,2,12,13,14,23,24,123,124\}$ (5) \\
			& &$\vec{\sQ}_2:\{1,3,4,13,14,123,124,134\}$ (4) \\
			& &$\vec{\sQ}_3:\{2,3,4,23,24,123,124,234\}$ (4) \\
			& &$\vec{\sQ}_4:\{3,4,13,14,23,24,123,124,134,234\}$ (1) \\
			& &$\vec{\sQ}_5:\{3,4,13,14,23,24,34,134,234\}$ (5) \\
			& &$\vec{\sQ}_6:\{1,2,3,4,13,14,23,24,123,124\}$ (1) \\
			\hline
		\end{tabular}
		\caption{Finest matroidal subdivisions of $\Pi_4$}
		\label{tab:Subdivisions-finest}
	\end{table*}

\end{document}